\newtheorem{theo}{Theorem}[section]
\newtheorem{prop}[theo]{Proposition}
\newtheorem{lemm}[theo]{Lemma}
\newtheorem{cor}[theo]{Corollary}
\theoremstyle{definition}
\newtheorem{remark}[theo]{Remark}
\newcommand{\Z}{\mathbb{Z}}
\newcommand{\N}{\mathbb{N}}
\newcommand{\F}{\mathbb{F}}
\newcommand{\R}{\mathbb{R}}
\title{The second quandle homology group of the knot $n$-quandle}
\author{Kokoro Tanaka}
\address{\scriptsize Department of Mathematics, Tokyo Gakugei University, Nukuikita 4-1-1, Koganei, Tokyo 184-8501, Japan}
\email{kotanaka@u-gakugei.ac.jp}
\author{Yuta Taniguchi}
\address{\scriptsize Department of Mathematics, Graduate School of Science, Osaka University, 
1-1, Machikaneyama, Toyonaka, Osaka, 560-0043, Japan}
\email{yuta.taniguchi.math@gmail.com}
\date{}
\begin{document}

\keywords{quandle; twist spun knot; $n$-quandle.}
\subjclass[2020]{57K10, 57K12.}
% 57M25 Knots and links in $S^3$ 
% 57M27 Invariants of knots and 3-manifolds
% 57K10 Knot theory 
% 57K12 Generalized knots (virtual knots, welded knots, quandles, etc.)
\maketitle

\begin{abstract}
%The knot quandle of an oriented classical knot in the $3$-sphere is known to be a complete invariant up to orientation. 
%The knot quandle of an oriented classical knot in the $3$-sphere is a complete invariant up to orientation. 
%The knot quandle is known to be a complete invariant for oriented knots in the $3$-sphere up to orientation. 
The knot quandle is a complete invariant for oriented classical knots 
in the $3$-sphere 
up to orientation. 
Eisermann computed the second quandle homology group of the knot quandle and showed that it characterizes the unknot. 
%The knot $n$-quandle is a certain quotient of the knot quandle for each integer $n>1$. 
In this paper, we 
%completely
compute the second quandle homology group of the knot $n$-quandle completely, 
%$1$, 
%$n>1$, 
where the knot $n$-quandle is a certain quotient of the knot quandle for each integer $n$ greater than one. 
Although the knot $n$-quandle is weaker than the knot quandle, 
the second quandle homology group of the former is found to have more information than that of the latter.
As one of the consequences, it follows that the second quandle homology group of the knot $3$-quandle characterizes the unknot, the trefoil and the cinquefoil. 
%
%In order to compute them, we explore some algebraic structures of a mapping 
%from the knot quandle of a certain $2$-knot to the knot $n$-quandle of a $1$-knot. 
%More precisely, we show that the knot quandle of the $n$-twist spin of a given $1$-knot is 
%both a central extension and the universal covering of the knot $n$-quandle of the $1$-knot. 
\end{abstract}

%%%%%%%%%%%%%%%%%%%%%%%%%%%%%%%%%%%%%%%%%%%%%%%%%%%%%%%%
\section{Introduction}\label{sect:intro}

Quandles \cite{Joyce1982quandle, Matveev1982distributive} are algebraic structures whose axioms correspond to Reidemeister moves for classical knot diagrams. They are compatible with knot theory and various knot invariants have been defined using them. 
%
%A set with a binary operation is called a quandle if the binary operation on the set satisfies certain three axioms. Since the axioms correspond to Reidemeister moves for classical knot diagrams in some sense, quandles are compatible with knot theory and various knot invariants have been defined using quandles. 
%
Among such invariants, the knot quandle of an oriented classical knot, 
which is also called an oriented $1$-knot for short, 
%also called an oriented $1$-knot for short henceforth, 
in the $3$-sphere is very powerful and known to be a complete invariant 
%for oriented $1$-knots 
up to orientation. 
%up to orientation, where we call a classical knot by a $1$-knot for short henceforth. 
However, it is difficult to treat the knot quandle $Q(K)$ of an oriented $1$-knot $K$ itself, and hence it is necessary to extract information from $Q(K)$. When extracting information, extrinsic information of $Q(K)$ such as the set of all quandle homomorphisms from $Q(K)$ to a chosen fixed finite quandle are often examined, but in this paper we focus on intrinsic information of $Q(K)$ such as quandle homology groups of $Q(K)$ and the knot $n$-quandle obtained from $Q(K)$ for each integer $n$ greater than $1$. 
%$n>1$. 

The homology theory for quandles was developed in \cite{Carter2003quandle} to define knot invariants. 
%called quandle cocycle invariants. 
The first quandle homology group of any connected quandle, such as the knot quandle $Q(K)$ of an oriented $1$-knot $K$, is known to be isomorphic to the infinite cyclic group $\Z$.  
%The knot quandle $Q(K)$ of an oriented classical knot (or $1$-knot) $K$ in the $3$-sphere is known to be a complete invariant up to orientation. 
Eisermann \cite{Eisermann2003unknot} computed the second quandle homology group $H^Q_2(Q(K))$ of $Q(K)$ and showed that it characterizes the unknot. More precisely, he showed that  $H^Q_2(Q(K))$ is trivial 
if %and only if 
$K$ is trivial, 
and that $H^Q_2(Q(K))$ is group isomorphic to 
%the infinite cyclic group 
$\Z$ 
if %and only if 
$K$ is nontrivial.

The knot $n$-quandle $Q_n(K)$ of an oriented $1$-knot $K$ is defined by taking a certain quotient of $Q(K)$ for each integer $n>1$. 
%is also a knot invariant defined by taking a certain quotient of $Q(K)$ for each integer $n>1$. 
%
Joyce \cite{Joyce1982quandle} introduced the knot quandle $Q(K)$ of $K$ and obtained basic results such as ``complete invariance" for oriented $1$-knots. In the last chapter of \cite{Joyce1982quandle}, the knot $2$-quandle $Q_2(K)$ of $K$ was also introduced. In that chapter, the knot $2$-quandles for some concrete examples were computed, but properties of $Q_2(K)$ remained unknown. Winker \cite{Winker1984quandles} introduced the knot $n$-quandle $Q_n(K)$ of $K$ as a generalization of $Q_2(K)$ for each integer $n>1$, and found a relation between $Q_n(K)$ and the fundamental group of the $n$-fold cyclic branched covering space $M_K^n$ branched over $K$. Hoste and Shanahan \cite{Hoste2017links} further deepened Winker's observation and showed that the finiteness of $Q_n(K)$ is equivalent to that of $\pi_1(M_K^n)$. It follows, for example, that the knot $2$-quandle of any $2$-bridge knot is always finite. On the other hand, the knot quandle of any nontrivial $1$-knot is known to be always infinite as in Subsection~\ref{subsect:type_n-quandle}, 
%and $Q_n(K)$ is easier to deal with than $Q(K)$ in general. 
and hence $Q_n(K)$ can be considered a more tractable invariant than $Q(K)$. 
%If you look at it from another perspective, it can be said that the knot $n$-quandle $Q_n(K)$ is weaker than the knot quandle $Q(K)$. 

In this paper, we investigate how much information the knot $n$-quandle $Q_n(K)$ still has 
about $K$. 
%about an oriented $1$-knot $K$. 
More specifically, we compute the second quandle homology group $H_2^Q(Q_n(K))$ of the knot $n$-quandle $Q_n(K)$ completely in Subsection~\ref{subsect:compute}, and reveal information that $H_2^Q(Q_n(K))$ has about $K$ in Subsection~\ref{subsect:detect}. Although $Q_n(K)$ is weaker than $Q(K)$, the second quandle homology group of $Q_n(K)$ is found to have more information about $K$ than that of $Q(K)$ for $n=3,4$ and $5$.  
Here is a remarkable consequence %(Theorem~\ref{theo:detect}) 
of our computations. 

\setcounter{section}{5}\setcounter{theo}{15}
%%%%%%%%%%%%%%%%%%%%%%%%%%%%%%%%%%%%%%%%%%%%%%%%%%%%%%%%
\begin{theo}
%\begin{ntheo}[\textbf{Theorem~\ref{theo:detect}}]
%{\rm (Theorem~\ref{theo:detect})}
%\noindent\textbf{Theorem~\ref{theo:detect}.}\textit{
%%%%%%%%%%%%%%%%%%%%%%%%%%%%%%%%%%%%%%%%%%%%%%%%%%%%%%%%
The second quandle homology group of the knot $n$-quandle characterizes 
%The group $H^Q_2(Q_m(K))$ characterizes 
%\begin{enumerate}
%\item
$(1)$ the unknot, the trefoil and the cinquefoil for $n=3$, 
%It characterizes the unknot, the trefoil and the cinquefoil for $n=3$. 
%If $n=3$, then it characterizes the unknot, the trefoil and the cinquefoil. 
%\item
$(2)$ the unknot and the trefoil for $n=4$ or $5$,  
%It characterizes the unknot and the trefoil for $n=4$ or $5$. 
%If $n=4$ or $5$, then it characterizes the unknot and the trefoil. 
%\item
$(3)$ the unknot for $n>5$, and  
%It characterizes the unknot for $n>5$. 
%If $n>5$, then it characterizes the unknot. 
%\item
$(4)$ the unknot up to $2$-bridge knot summands for $n=2$. 
%It characterizes the unknot among all prime knots for $n=2$. 
%If $n=2$, then it characterizes the unknot among all prime knots. 
%\end{enumerate}
%%%%%%%%%%%%%%%%%%%%%%%%%%%%%%%%%%%%%%%%%%%%%%%%%%%%%%%%
%}
%\end{ntheo}
\end{theo}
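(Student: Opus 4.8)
The plan is to derive this from the complete computation of $H^Q_2(Q_n(K))$ in Subsection~\ref{subsect:compute}. Extrapolating from Eisermann's analysis of the knot quandle \cite{Eisermann2003unknot}, the expected form of that computation is that $H^Q_2(Q_n(K))$ is cyclic, generated by the class of a longitude of $K$: writing $\pi^{\mathrm{orb}}_n(K)$ for the orbifold fundamental group of $(S^3,K)$ with cone angle $2\pi/n$ along $K$, and $d_n(K)$ for the order of the image of a longitude in $\pi^{\mathrm{orb}}_n(K)$ (equivalently in $\pi_1(M_K^n)$), one should have $H^Q_2(Q_n(K))\cong\Z/d_n(K)$, with the convention $\Z/0=\Z$; the ``$n=\infty$'' specialization is exactly Eisermann's theorem, since a longitude has infinite order in $\pi_1(S^3\setminus K)$ precisely when $K$ is nontrivial. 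In particular $H^Q_2(Q_n(U))=0$ for all $n$, as $Q_n(U)$ is the one-element quandle with trivial longitude. It then remains to determine, for each $n$, the knots on which $d_n(K)$ is finite, and the finite values attained.

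For $n\ge 3$ I would argue that $H^Q_2(Q_n(K))$ is finite if and only if $Q_n(K)$ is finite. The ``if'' direction is trivial. For ``only if'', pass to the prime decomposition $K=K_1\#\cdots\#K_r$: then $\pi_1(M_K^n)=\ast_i\pi_1(M_{K_i}^n)$ and a longitude of $K$ is the product of longitudes of the summands lying in the respective factors, so by the normal form in free products it can have finite order only if $r\le 1$; for a prime $K$ the orbifold theorem and Perelman's geometrization then let one inspect the geometric decomposition of $(S^3,K;n)$ and conclude that a longitude is non-torsion unless $(S^3,K;n)$ is spherical. By the Hoste--Shanahan criterion \cite{Hoste2017links} together with the classification of spherical $3$-orbifolds with underlying space $S^3$ and knotted singular set, $d_n(K)$ is thus finite exactly when $K=T(p,q)$ is a torus knot with $\tfrac1p+\tfrac1q+\tfrac1n>1$; solving the inequality leaves $\{U,T(2,3),T(2,5)\}$ for $n=3$, $\{U,T(2,3)\}$ for $n=4,5$, and $\{U\}$ for $n>5$. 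For the surviving torus knots one computes $d_n(T(p,q))$ from the standard presentation of $\pi^{\mathrm{orb}}_n(T(p,q))$ as a central extension of the (finite) triangle group $\Delta(p,q,n)$, in which a longitude is $h\,m^{-pq}$ with $h$ the regular fibre and $m$ a meridian: this gives $\Z/6$ for both $T(2,3)$ and $T(2,5)$ when $n=3$, $\Z/8$ for $T(2,3)$ when $n=4$, and $\Z/10$ for $T(2,3)$ when $n=5$. As each of these is finite, nonzero, and distinct from $\Z$, statements $(1)$--$(3)$ follow: $H^Q_2(Q_3(\cdot))$ picks out the unknot (value $0$) and the pair $\{$trefoil, cinquefoil$\}$ (value $\Z/6$) among all knots, and for $n=4,5$ it picks out the unknot and the trefoil individually.

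The case $n=2$ is the main obstacle, since there $d_2(K)=1$ --- i.e.\ $H^Q_2(Q_2(K))=0$ --- for infinitely many nontrivial knots. If $K$ is a $2$-bridge knot then $M_K^2$ is a lens space, $\pi_1(M_K^2)$ is cyclic, and a null-homologous longitude is automatically trivial; if $K$ is a connected sum of $2$-bridge knots then $\pi_1(M_K^2)$ is the corresponding free product of cyclic groups and a longitude, being a product of trivial summand longitudes, is still trivial. So $H^Q_2(Q_2(K))=0$ whenever $K$ is a connected sum of $2$-bridge knots. The converse is the hard direction: via the equivariant sphere theorem one gets an equivariant prime decomposition of $M_K^2$ matching the prime decomposition of $K$, reduces by the free-product normal form to a prime knot whose branch-locus longitude is trivial, and must then show such a knot has cyclic $\pi_1(M_K^2)$ and hence is $2$-bridge by the Hodgson--Rubinstein theorem. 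The trouble is that one cannot cap the branch locus off with an equivariant disk --- equivariant Dehn's lemma fails for interior curves, the trefoil (longitude trivial in $\pi_1(L(3,1))$) being a witness --- so this last step must be wrung out of the geometric structure of $M_K^2$. This yields $(4)$.
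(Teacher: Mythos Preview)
Your overall strategy matches the paper's: $H^Q_2(Q_n(K))$ is cyclic of order $d_n(K)$, the order of the longitude in $\pi_1(M^n_K)$, and for prime $K$ with $n\ge 3$ the finite cases are exactly the spherical orbifolds, i.e.\ torus knots with $\tfrac1p+\tfrac1q+\tfrac1n>1$.

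However, your explicit values are wrong, and one error breaks the proof of (1). You assert $d_3(3_1)=6$, but $\pi_1(M^3_{3_1})\cong Q_8$ has no element of order $6$. The correct value is $d_3(3_1)=2$: one has $|Q_3(3_1)|=4$ by \cite{Hoste2017links} and $|\pi_1(M^3_{3_1})|=8$, so the identification $Q_3(3_1)\cong A^3_{3_1}\backslash\pi_1(M^3_{3_1})$ forces $|A^3_{3_1}|=2$. With your values both the trefoil and the cinquefoil land on $\Z/6$, and indeed you only conclude that $H^Q_2(Q_3(\cdot))$ ``picks out \ldots\ the pair $\{$trefoil, cinquefoil$\}$''. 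That is strictly weaker than (1), which asserts that each of the three knots is characterized \emph{individually}; the paper gets $\Z/2$ for the trefoil and $\Z/6$ for the cinquefoil, and these are distinct. (Similarly $d_4(3_1)=4$, not $8$: $\pi_1(M^4_{3_1})$ is the binary tetrahedral group of order $24$, which has no element of order $8$. This does not affect (2), but it signals the same miscalculation; in your formula $l=h\,m^{-pq}$ with $(p,q,n)=(2,3,3)$ one has $m^{-6}=1$ and the order of $h$ in $G_3(3_1)$ is $|G_3(3_1)|/|\Delta(2,3,3)|=24/12=2$, not $6$.)

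Your $n=2$ argument is also left incomplete: after reducing to a prime $K$ with $l_K$ trivial in $\pi_1(M^2_K)$, you want $K$ to be $2$-bridge but concede the last step ``must be wrung out of the geometric structure'' without carrying it out. The paper's route is more concrete: Dunbar's classification of spherical $3$-orbifolds lists all prime $K$ with $\pi_1(M^2_K)$ finite --- the $2$-bridge knots and the two Montesinos families $M(b;\tfrac12,\tfrac{\beta_2}{3},\tfrac{\beta_3}{\alpha_3})$ with $\alpha_3\in\{3,5\}$ --- and Sakuma's description of the preimage of the branch set in the universal cover $S^3\to M^2_K$ gives $d_2(K)=1,2,4$ in these three cases respectively; for infinite $\pi_1(M^2_K)$ the paper shows $l_K$ is non-torsion via torsion-freeness of irreducible $3$-manifold groups together with Smith theory applied to the $\R^3$ cover. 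Statement (4) then follows from the full table.
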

%%%%%%%%%%%%%%%%%%%%%%%%%%%%%%%%%%%%%%%%%%%%%%%%%%%%%%%%
\setcounter{section}{1}\setcounter{theo}{0}

%%%%%%%%%%%%%%%%%%%%%%%%%%%%%%%%%%%%%%%%%%%%%%%%%%%%%%%%
\subsection{Strategy to compute}
%\subsection{Strategy to compute $H_2^Q(Q_n(K))$}
%[Eisermann] 
%Theorem 1.15($H_2=\pi_1^{ab}$), 
%Proposition 4.15(extention by G \verb|<-->| Galois covering, G=Aut)
%Proposition 5.6(univ \verb|-->| $\pi_1=Aut$), 
%Proposition 5.9(univ \verb|-->| $\pi_1=G$)]

%Since the knot $n$-quandle is known to be connected, 
We outline here our strategy to compute the second quandle homology group of a connected quandle,  
such as the knot $n$-quandle $Q_n(K)$ of an oriented $1$-knot for each integer $n>1$. 
Instead of computing according to the original definition in \cite{Carter2003quandle}, we use 
the quandle extention theory developed 
in \cite{Carter2003extension, Eisermann2003unknot, Eisermann2014quandle}
%in \cite{Carter2003extension} and \cite{Eisermann2003unknot, Eisermann2014quandle},  
%in \cite{Carter2003extension} and independently in \cite{Eisermann2003unknot, Eisermann2014quandle},  
and the quandle covering theory developed 
in \cite{Eisermann2003unknot, Eisermann2014quandle}. 
%the (quandle) covering theory developed 
%in \cite{Eisermann2003unknot, Eisermann2014quandle} 
%and the (quandle) extention theory developed 
%in \cite{} and \cite{Eisermann2003unknot, Eisermann2014quandle}. 
For a connected quandle $X$, 
let $\Lambda \curvearrowright \tilde{X} \to X$ be an extension of $X$ by a group $\Lambda$ consisting of a group action $\Lambda \curvearrowright \tilde{X}$ on a quandle $\tilde{X}$ and a surjective quandle homomorphism $\tilde{X} \to X$. 
%a surjective (quandle) homomorphism $X \to Q$ and 
%a group action $G \curvearrowright X$ of $G$ on a quandle $X$. 
Now suppose %that $X$ is also connected and 
that the surjection $\tilde{X} \to X$ is a universal covering. 
%If the surjection $X \to Q$ is a universal corvering, then 
Then it follows from \cite{Eisermann2014quandle}
%\cite[Theorem~1.15, Proposition~4.15 and Proposition~5.9]{Eisermann2014quandle} 
that $H_2^Q(X)$ is group isomorphic to the abelianization $\Lambda^{\mathrm{ab}}$of $\Lambda$. 
More precisely, we have 
that $H_2^Q(X)$ is group isomorphic to $\pi_1(X)^{\mathrm{ab}}$ 
and that $\pi_1(X)$ is group isomorphic to $\Lambda$, 
%\[H_2^Q(Q) \cong \pi_1(Q)^{ab} \ \ \text{and} \ \ \pi_1(Q) \cong G, \]
where $\pi_1(X)$ is the fundamental group \cite{Eisermann2014quandle} of the connected quandle $X$.  
%in the sense of Eisermann \cite{Eisermann2014quandle}. 
%and $\mathrm{Aut}(X \to Q)$ is the group of covering automorphism \cite{} of $X \to Q$. 
We note that the first statement follows from 
\cite[Theorem~1.15]{Eisermann2014quandle} and 
the second statement follows from 
\cite[Proposition~5.9]{Eisermann2014quandle}.

With these in mind, 
in order to compute $H_2^Q(Q_n(K))$,
% for an oriented nontrivial $1$-knot $K$ and an integer $n>1$, 
we will focus on %the knot quandle of %$Q(\tau^m(K))$ 
a certain fibered $2$-knot, called the $n$-twist spin of $K$ and denoted by $\tau^n K$, 
which is an embedded $2$-sphere in the $4$-sphere defined by Zeeman \cite{Zeeman1965twisting}. 
%and a natural surjective homomorphism  $Q(\tau^m K) \to Q_m(K)$. 
The fiber of $\tau^n K$ is known to be the once punctured closed $3$-manifold $M^n_K$, 
where $M^n_K$ is the $n$-fold cyclic branched covering of $S^3$ branched over $K$. 
%Let $M^n_K$ be the $n$-fold cyclic branched covering of $S^3$ branched over $K$, and 
Let $A^n_K$ be a cyclic group generated by a lift of the longitude of $K$ in $\pi_1(M^n_K)$. 
%Let $A_n(K)$ be a cyclic group generated by (a lift of) the longitude of $K$ in 
%the fundamental group of the $n$-fold cyclic branched covering of $S^3$ branched over $K$. 
Then we will show that 
%$A_n(K) \curvearrowright Q(\tau^n K) \to Q_n(K)$ is a quandle extension (Theorem~\ref{theo:}), 
there is an extension $A^n_K \curvearrowright Q(\tau^n K) \to Q_n(K)$ 
in Theorem~\ref{theo:central_extension}, 
and that %the surjection 
$Q(\tau^n K) \to Q_n(K)$ is a universal covering  
in Theorem~\ref{theo:universal_covering}. 
Combining these two theorems, %as mentioned above, 
we conclude that $H_2^Q(Q_n(K))$ is group isomorphic to the cyclic group $A^n_K$. 
Finally, by geometric arguments, 
%in Section~\ref{sec:}, 
we can determine 
the order of the cyclic group $A^n_K$
%the isomorphism class of $A^n_K$ 
explicitly in Section~\ref{sect:quandle_homology}. 
%Theorem~\ref{theo:quandle_homlogy_finite}. 

%%%%%%%%%%%%%%%%%%%%%%%%%%%%%%%%%%%%%%%%%%%%%%%%%%%%%%%%
\subsection{Remarks} 
The aforementioned theorems, 
Theorem~\ref{theo:central_extension} and~\ref{theo:universal_covering},  
have their own interests. 
%A relationship between the former and \cite[Theorem~35]{Eisermann2003unknot} and 
%that between the latter and \cite[Theorem~6.3]{Inoue2021knot} are described below. 
We mention a relation with \cite{Eisermann2003unknot} and that with \cite{Inoue2021knot} below.

Eisermann \cite[Theorem~35]{Eisermann2003unknot} showed that, 
for an oriented nontrivial  $1$-knot $K$,  
there is an extension $\Z \curvearrowright Q(\widehat{K}) \to Q(K)$ 
coming from the long knot $\widehat{K}$ obtained by cutting $K$ open, 
and also that $Q(\widehat{K}) \to Q(K)$ is a universal covering. 
Since the knot quandle of $\widehat{K}$ is naturally isomorphic to that of 
the $0$-twist spin $\tau^0 K$ and the ``knot $0$-quandle'' is the same as 
%nothing but 
the knot quandle by definition, 
his extension is rephrased as $\Z \curvearrowright Q(\tau^0 K) \to Q_0(K)$. 
There is no definition for  
%a ``$0$-fold cyclic branched covering $M^0_K$''
a ``space $M^0_K$,'' but a ``group $\pi_1(M^0_K)$''  should be group isomorphic to the commutator subgroup of the knot group of $K$; see the first paragraph of Subsection~\ref{subsect:structure_n-quandle}. Hence it is natural to take a space $M^0_K$ as the infinite cyclic covering space of $S^3\setminus K$, 
%the complement of $K$, 
and then a cyclic group $A^0_K$ generated by a lift of the longitude of $K$ in $\pi_1(M^0_K)$ is group isomorphic to $\Z$.  
%
%We point out here that 
Thus, substituting $0$ for $n$ in our two theorems would yield his results formally; 
however, it does not give an alternative proof.

%%%%%%%%%%%%%%%%%%%%%%%%%%%%%%%%%%%%%%%%%%%%%%%%%%%
\begin{comment}
Eisermann \cite[Theorem~35]{Eisermann2003unknot} showed that, 
for an oriented nontrivial  $1$-knot $K$,  
there is an extension $\Z \curvearrowright Q(\widehat{K}) \to Q(K)$ 
coming from the long knot $\widehat{K}$ obtained by cutting $K$ open, 
and also showed that $Q(\widehat{K}) \to Q(K)$ is a universal covering. 
%
%We note that the subgroup $A^0_K$ of $\pi_1(??)$ generated by the longitude of $K$ is group isomorphic to $\Z$. 
%
We can observe that his extension can be rephrased as 
$\Z \curvearrowright Q(\tau^0 K) \to Q_0(K)$, 
%$(\Z \cong)  A^0_K \curvearrowright Q(\tau^0 K) \to Q_0(K)  (\cong Q(K))$,
%\[(\Z \cong)  A^0_K \curvearrowright Q(\tau^0 K) \to Q_0(K)  (\cong Q(K)),\]
since the knot quandle of $\widehat{K}$ is naturally isomorphic to that of 
the ($0$-twist) spin $\tau^0 K$ of $K$, and the ``knot $0$-quandle'' is nothing but 
the knot quandle by definition. 
%
There is no definition of a ``$0$-fold cyclic branched covering $M^0_K$'', but a ``group $\pi_1(M^0_K)$''  should be group isomorphic to the commutator subgroup of the complement of $K$; see the first paragraph of Subsection~\ref{subsect:structure_n-quandle}. Then it is natural to take a space $M^0_K$ as the infinite cyclic covering space of the complement of $K$, and a cyclic group $A^0_K$ generated by a lift of the longitude of $K$ in $\pi_1(M^0_K)$ turns out to be group isomorphic to $\Z$.  
%
%Hence, we have that 
Thus substituting $0$ for $n$ in our two theorems would yield his results formally; however,  
it does not give an alternative proof. 
%When we substitute $0$ to $n$ in our theorems formally, 
%we obtain his result, however, we do not give an alternative proof. 
\end{comment}
%%%%%%%%%%%%%%%%%%%%%%%%%%%%%%%%%%%%%%%%%%%%%%%%%%%

%Inoue \cite{} studied the algebraic structure of the knot quandle of twist spins for the trefoil $3_1$. 
Inoue \cite{Inoue2021knot} introduced the notion of the Schl\"{a}fli quandle $X_n$ 
%denoted by $\{3,n\}$ with abuse of notation, %$\{3,n\}$
that is related to the regular tessellation $\{3,n\}$ of geometric $2$-spaces in the sense of the Schl\"{a}fli symbol. 
%with respect to selected rotational symmetries of regular tessellations $\{3,m\}$ 
%of spherical, Euclidean and hyperbolic spaces in the sense of the Schl\"{a}fli symbol. 
%
%He studied relation between the Schl\"{a}fli quandles and 
%the knot quandle, denoted by $Q_n$, of the $n$-twist spun trefoil for 
%each integer $n>1$, and 
%showed that $Q_n$ is a central extension of the Schl\"{a}fli quandle related 
%to the regular tessellation $\{3,n\}$ in the sense of the Schl\"{a}fli symbol. 
Then, for the trefoil $3_1$, he showed in \cite[Theorem~6.3]{Inoue2021knot}
that there is an extension 
$A_n \curvearrowright Q(\tau^n \, 3_1) \to X_n$ by a cyclic group $A_n$. 
%$A_n \curvearrowright Q(\tau^n \, 3_1) \to \{3,n\}$ by a cyclic group $A_n$. 
On the other hand, 
when we take an oriented $1$-knot $K$ as the trefoil $3_1$ in Theorem~\ref{theo:central_extension}, 
we have the extension $A^n_{3_1} \curvearrowright Q(\tau^n \, 3_1) \to Q_n(3_1)$. 
Since the quandle $X_n$ turns out to be naturally isomorphic to $Q_n(3_1)$ in Lemma~\ref{lemm:Schlafli}, 
we see that the two extensions are essentially the same, 
and in particular, we have that $A_n$ is group isomorphic to $A^n_{3_1}$. 
He computed $A_n$ for $n \leq 5$ and asked what $A_n$ is for $n>5$. 
As a byproduct, we can answer his question; 
in fact, %it follows from Theorem~\ref{theo:} that  
we have that $A_n$ is group isomorphic to $\Z$ for $n>5$.

%%%%%%%%%%%%%%%%%%%%%%%%%%%%%%%%%%%%%%%%%%%%%%%%%%%%%%%%
\subsection{Organization}
This article is organized as follows. In Section~\ref{sect:quandle}, we review basics of quandle theory including the knot quandles and the knot $n$-quandles. In Section~\ref{sect:central_extension}, we study algebraic structures of the knot $n$-quandle and prove that the knot quandle of the $n$-twist spin of $K$ is a central extension of the knot $n$-quandle of $K$ (Theorem~\ref{theo:central_extension}). In Section~\ref{sect:universal_covering}, we review the covering theory for quandles and prove 
that the quandle covering associated with the central extension is a universal covering (Theorem~\ref{theo:universal_covering}). Section~\ref{sect:quandle_homology} is devoted to computing the second quandle homology group of the knot $n$-quandle. Finally, in Section~\ref{sect:topics}, 
we collect three related results: a relation with an extension in \cite{Inoue2021knot}, the types of the knot $n$-quandles, and the cardinalities of the finite knot $n$-quandles.   
%we collect three related results including the type of the knot $n$-quandle and the knot $n$-quandles with finite cardinalities. 

%%%%%%%%%%%%%%%%%%%%%%%%%%%%%%%%%%%%%%%%%%%%%%%%%%%%%%%%
\section{Quandles}
\label{sect:quandle}
A \textit{quandle} \cite{Joyce1982quandle,Matveev1982distributive} is a non-empty set $X$ with a binary operation $\ast$ that satisfies the following conditions:
\begin{itemize}
\item For any $x\in X$, we have $x\ast x=x$.
\item For any $y\in X$, the map $S_y:X\to X;x\mapsto x\ast y$ is bijective.
\item For any $x,y,z\in X$, we have $(x\ast y)\ast z=(x\ast z)\ast(y\ast z)$.
\end{itemize} 
%Let $X$ be a quandle. 
%For any $x,y\in X$ and $n\in\Z$, we denote $S_y^n(x)$ by $x\ast^n y$. 
We denote $S_y^n(x)$ by $x\ast^n y$ for any $x,y\in X$ and $n\in\Z$.  
Then we set 
%$$\textrm{type}(X):=\textrm{min}\{ n \in \N \mid x\ast^n y=x\ \textrm{for any }x,y\in X\},$$ 
$\textrm{type}(X):=\textrm{min}\{ n \in \N \mid x\ast^n y=x\ \textrm{for any }x,y\in X\},$ 
where 
%$x\ast^n y$ denotes $S_y^n(x)$ for $x,y\in X$ and $n\in\Z$, and 
$\textrm{min}\ \emptyset=\infty$. 
We call $\textrm{type}(X)$ the \textit{type of a quandle $X$}.

A map $f:X\to X'$ between quandles is said to be a \textit{quandle homomorphism} if $f(x\ast y)=f(x)\ast f(y)$ for any $x,y\in X$. A quandle homomorphism $f:X\to X'$ is said to be a \textit{quandle isomorphism} if $f$ is a bijection.

The \textit{associated group} of $X$, denoted by $\textrm{As}(X)$, is the group generated by the elements of $X$ subject to the relations $x\ast y=y^{-1}xy$ for $x,y\in X$. 
%We denote the associated group of $X$ by $\textrm{As}(X)$. 
We note that ${\rm As}$ is a functor from the category of quandles to the category of groups. The associated group ${\rm As}(X)$ acts on $X$ from the right by $x\cdot y:=x\ast y$ for any $x,y\in X$. A quandle $X$ is \textit{connected} if the action of $\textrm{As}(X)$ on $X$ is transitive.

Let $G$ be a group and $\varphi:G\to G$ a group automorphism. We define the binary operation $\ast$ on $G$ by $x\ast y:=\varphi(xy^{-1})y$. Then $\textrm{GAlex}(G,\varphi)=(G,\ast)$ becomes a quandle, which is called the \textit{generalized Alexander quandle}.

%Let $K$ be an oriented $n$-knot for $n=1,2$. 
Let $N(K)$ be a tubular neighborhood of $K$ and $E(K)=S^{3}\backslash {\rm int}N(K)$ an exterior of an oriented $1$-knot $K$ in the $3$-sphere $S^3$. We fix a base point $p\in E(K)$. Let $Q(K,p)$ be the set of homotopy classes of all paths in $E(K)$ from a point in $\partial E(K)$ to the base point $p$. The set $Q(K,p)$ is a quandle with an operation defined by $[\alpha]\ast[\beta]:=[\alpha\cdot\beta^{-1}\cdot m_{\beta(0)}\cdot \beta]$, where $m_{\beta(0)}$ is a meridian loop starting from $\beta(0)$ and going along in the positive direction. We call $Q(K,p)$ the {\it knot quandle} of $K$, 
which is known to be connected. 
Since the quandle isomorphism class of $Q(K,p) $ does not depend on the base point $p$, we denote the knot quandle $Q(K,p)$ simply by $Q(K)$. Note that, for an oriented $1$-knot $K$, the associated group ${\rm As}(Q(K))$ is group isomorphic to the knot group $G(K):=\pi_1(E(K))$.

\begin{comment}
Let $K$ be an oriented 1-knot. Let $N(K)$ be a tubular neighborhood of $K$ and $E(K)=S^{3}\backslash \textrm{int}N(K)$ be an exterior of $K$. We fix a point $p\in E(K)$. Let $Q(K,p)$ be the set of homotopy classes of all pairs $(D,\alpha)$, where $D$ is a meridian disk of $K$ and $\alpha$ is a path in $E(K)$ from $\partial D$ to $p$. The set $Q(K,p)$ is a quandle with the binary operation defined by
\[
[(D_1,\alpha)]\ast[(D_2,\beta)]:=[(D_1,\alpha\cdot\beta^{-1}\cdot\partial D_2\cdot \beta)].
\] 
We call $Q(K,p)$ the \textit{knot quandle} of $K$. The quandle isomorphism class of the knot quandle does not depend on the base point $p$. Thus, we denote the knot quandle simply by $Q(K)$. 
\end{comment}

Let $K$ be an oriented $1$-knot and $n$ a positive integer. 
We consider a equivalence relation, denoted by $\sim_n$, on the knot quandle $Q(K)$ generated by $x\sim x\ast^n y$ for all $x,y\in Q(K)$. 
%Let $\sim_n$ denote a  equivalence relation on the knot quandle $Q(K)$ generated by $x\sim x\ast^n y\ (x,y\in Q(K))$. 
Then we define $Q_n(K)$ by the quotient set $Q(K)/\sim_n$. This quotient set $Q_n(K)$ has a quandle operation inherited from the knot quandle $Q(K)$. We call this quandle $Q_n(K)$ the \textit{knot $n$-quandle} of $K$, which is also known to be connected.  In some papers \cite{Crans2019finite,Hoste2017links,Winker1984quandles}, the knot $n$-quandle of $K$ is called the \textit{$n$-quandle} of $K$. Joyce \cite{Joyce1982quandle} used the term ``$n$-quandle" to mean a quandle 
whose type is a divisor of $n$. 
%whose type is less than or equal to $n$. 
To avoid confusion, we use the term ``knot $n$-quandle of $K$" for $Q_n(K)$ in this paper.
%In this paper, we call $Q_n(K)$ simply the \textit{$n$-quandle} of a 1-knot $K$.

%%%%%%%%%%%%%%%%%%%%%%%%%%%%%%%%%%%%%%%%%%%%%%%%%%%%%%%%
\section{Central extensions of the knot $n$-quandles}
\label{sect:central_extension}
We discuss algebraic structures of the knot $n$-quandle $Q_n(K)$ of an oriented $1$-knot $K$ in Subsection~\ref{subsect:structure_n-quandle}, and show that the knot quandle of the $n$-twist spin of $K$ is a central extension of $Q_n(K)$
%the knot $n$-quandle $Q_n(K)$ of $K$ 
in Subsection~\ref{subsect:central_extension}.

%%%%%%%%%%%%%%%%%%%%%%%%%%%%%%%%%%%%%%%%%%%%%%%%%%%%%%%%
%\subsection{The algebraic structure of the knot $n$-quandle of a 1-knot}
\subsection{Algebraic structures of the knot $n$-quandles}
\label{subsect:structure_n-quandle}

%%%%%%%%%%%%%%%%%%%%%%%%
%Let $K$ be an oriented 1-knot and $G(K)=\langle x_1,\ldots,x_c\mid r_1,\ldots,r_{c-1}\rangle$ be the Wirtinger presentation of the knot group of $K$ and $l_K\in G(K)$ be the preferred longitude of $K$. We put $m_K:=x_1$. Let $n$ be a positive integer greater than $1$ and $M^n_K$ the $n$-fold cyclc branched covering of $S^3$ branched over $K$. Then the group $\pi_1(M^n_K)$ is isomorphic to the quotient group $\textrm{ Ker}(G(K)\to\Z/n\Z;m_K\mapsto 1)/\langle m_K^n\rangle$. We remark that $l_K$ is an element of $\textrm{ Ker}(G(K)\to\Z/n\Z;m_K\mapsto 1)$. Hence, the element $l_K$ can be regarded as an element of $\pi_1(M^n_K)$. Notice that the element $l_K\in\pi_1(M^n_K)$ is represented by the branching set of $M^n_K$.
%%%%%%%%%%%%%%%%%%%%%%%%

%Let $K$ be an oriented 1-knot and $G(K)$ the knot group of $K$. 
%Let $G(K)$ be the knot group of an oriented $1$-knot $K$. 
For an oriented $1$-knot $K$, 
we denote by $m_K\in G(K)$ a meridian of $K$ and by $l_K\in G(K)$ the preferred longitude of $K$, where $G(K)$ is the knot group of $K$. 
%Let $n$ be a positive integer greater than $1$ and 
Let $M^n_K$ be the $n$-fold cyclic branched covering of $S^3$ branched over $K$ for an integer $n$ greater than $1$. Then the group $\pi_1(M^n_K)$ is group isomorphic to the quotient group 
$\textrm{Ker}(G(K)\to\Z/n\Z;m_K\mapsto 1)/\langle\langle m_K^n\rangle\rangle$, where $\langle\langle m^n_K\rangle\rangle$ is the normal closure of $m^n_K$. 
Since $l_K$ is an element of $\textrm{Ker}(G(K)\to\Z/n\Z;m_K\mapsto 1)$, 
it can be regarded as an element of $\pi_1(M^n_K)$. 
%We remark that $l_K$ is an element of $\textrm{Ker}(G(K)\to\Z/n\Z;m_K\mapsto 1)$. 
%Hence, the element $l_K$ can be regarded as an element of $\pi_1(M^n_K)$. 
%
We notice that $l_K\in\pi_1(M^n_K)$ is represented by the branching set of $M^n_K$, 
where this fact will be used in Subsection~\ref{subsec:case_prime_knot}.

We define the group $G_n(K)$ by $G(K)/\langle\langle m^n_K\rangle\rangle$.
%, where $\langle\langle m^n_K\rangle\rangle$ is the normal closure of $m^n_K$. 
It holds that the abelianization of $G_n(K)$ is $\Z/n\Z$, and 
it follows from \cite[Theorem~5.22]{Winker1984quandles} that we have an exact sequence
%Then there is an exact sequence
\[
1\to \pi_1(M^n_K)\to G_n(K)\xrightarrow{\textrm{abelianization}} \Z/n\Z\to 1 . 
\] 
%(see \cite[Theorem~5.22]{Winker1984quandles}). 
Since this sequence splits, the group $G_n(K)$ is group isomorphic to the semidirect product $\pi_1(M^n_K)\rtimes\Z/n\Z$. Let $\varphi$ be the group automorphism of $\pi_1(M^n_K)$ which is the restriction of the group automorphism of $G_n(K)$ 
%$G_n(K)\to G_n(K)$ 
sending $g$ to $m_K^{-1}gm_K$ for any $g$ in $G_n(K)$. 
%;g\mapsto m_K^{-1}gm_K$. 
Since $m_K$ commutes with $l_K$ in $G(K)$, we have $\varphi(l_K)=l_K$.

Let $P$ be the subgroup of $G_n(K)$ generated by $m_K$ and $l_K$. Let us define the quandle operation $\ast$ on $P\backslash G_n(K)$ by $Px\ast Py:=Pxy^{-1}m_Ky$ for $Px,Py\in P\backslash G_n(K)$. 
%We denote this quandle $(P\backslash G_n(K),\ast)$ simply by $P\backslash G_n(K)$. 
Hoste and Shanahan \cite{Hoste2017links} showed that the knot $n$-quandle $Q_n(K)$ is quandle isomorphic to $P\backslash G_n(K)$. 

Let $A^n_K$ be the subgroup of $\pi_1(M^n_K)$ generated by $l_K$. Then we can define the quandle operation $\ast$ on $A^n_K\backslash \pi_1(M^n_K)$ by $A^n_Kx\ast A^n_Ky:=A^n_K\varphi(xy^{-1})y$ for any $A^n_Kx,A^n_Ky\in H\backslash \pi_1(M^n_K)$. 
%We also denote the quandle $(A^n_K\backslash \pi_1(M^n_K),\ast)$ simply $A^n_K\backslash \pi_1(M^n_K)$.
\begin{prop}
\label{prop:knot_n_qdle}
The knot $n$-quandle $Q_n(K)$ of an oriented $1$-knot $K$ is quandle isomorphic to $A^n_K\backslash \pi_1(M^n_K)$ for an integer $n>1$.
%The knot $n$-quandle $Q_n(K)$ of the $1$-knot $K$ is quandle isomorphic to $A^n_K\backslash \pi_1(M^n_K)$.
\end{prop}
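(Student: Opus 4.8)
The plan is to bridge the two descriptions of $Q_n(K)$ already on the table: the Hoste–Shanahan model $P\backslash G_n(K)$ and the candidate $A^n_K\backslash\pi_1(M^n_K)$. Since we have the split exact sequence $1\to\pi_1(M^n_K)\to G_n(K)\to\Z/n\Z\to 1$ with $G_n(K)\cong\pi_1(M^n_K)\rtimes\Z/n\Z$, where the $\Z/n\Z$ factor is generated (via the splitting) by $m_K$, I would first observe that every coset in $P\backslash G_n(K)$ has a representative in $\pi_1(M^n_K)$: indeed $P$ contains $m_K$, so $Pg$ for $g=h\,m_K^{i}$ with $h\in\pi_1(M^n_K)$ equals $P\,h\,m_K^{i}=P\,m_K^{i}(m_K^{-i}h\,m_K^{i})$, and absorbing $m_K^{i}$ into $P$ leaves a representative $\varphi^{-i}(h)\in\pi_1(M^n_K)$ (using that $\varphi$ is conjugation by $m_K^{-1}$). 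So the inclusion $\pi_1(M^n_K)\hookrightarrow G_n(K)$ induces a surjection $\pi_1(M^n_K)\to P\backslash G_n(K)$.

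Next I would identify the fibers of this surjection. Two elements $h,h'\in\pi_1(M^n_K)$ have $Ph=Ph'$ in $G_n(K)$ iff $h'h^{-1}\in P\cap\pi_1(M^n_K)$ (since $h'h^{-1}$ lies in $\pi_1(M^n_K)$, being a product of elements of that normal subgroup—wait, need $h'h^{-1}\in P$ and $h'h^{-1}\in\pi_1(M^n_K)$, so in the intersection). The key subclaim is therefore $P\cap\pi_1(M^n_K)=A^n_K$, i.e. the subgroup of $G_n(K)$ generated by $m_K$ and $l_K$ meets the kernel of the abelianization exactly in $\langle l_K\rangle$. One inclusion is immediate since $l_K\in\pi_1(M^n_K)$. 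For the reverse: an element of $P$ is a word in $m_K^{\pm1}, l_K^{\pm1}$; since $m_K$ and $l_K$ commute in $G(K)$ (hence in $G_n(K)$), $P$ is abelian and every element is $m_K^{a}l_K^{b}$; it lies in $\pi_1(M^n_K)$ (the kernel of $m_K\mapsto 1\in\Z/n\Z$) iff $a\equiv 0\pmod n$, and then $m_K^{a}=m_K^{nk}$ is trivial in $G_n(K)=G(K)/\langle\langle m_K^n\rangle\rangle$—wait, $m_K^{nk}$ is in the normal closure but need not be trivial; however its image in $G_n(K)$ is, by definition of $G_n(K)$... actually $\langle\langle m_K^n\rangle\rangle$ is the normal closure, and $m_K^{nk}$ is a power of $m_K^n$, hence in it, hence trivial in $G_n(K)$. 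So the element is $l_K^{b}\in A^n_K$. This gives $P\cap\pi_1(M^n_K)=A^n_K$, hence a bijection $A^n_K\backslash\pi_1(M^n_K)\to P\backslash G_n(K)$.

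Finally I would check this bijection is a quandle homomorphism, which is essentially bookkeeping: under the correspondence $A^n_K h\leftrightarrow P h$, the target operation is $Ph\ast Pk=P\,hk^{-1}m_K k$; rewriting $hk^{-1}m_K k=(hk^{-1})\,m_K\,k$ and pushing $m_K$ to the left picks up the automorphism $\varphi$, giving $m_K\,\varphi(hk^{-1})\cdot(\text{correction})$—more carefully, $hk^{-1}m_K k= m_K\,\big(m_K^{-1}(hk^{-1})m_K\big)\,k=m_K\,\varphi(hk^{-1})\,k$, and the leading $m_K$ is absorbed into $P$, leaving the coset of $m_K^{-1}\!\cdot\!(m_K\varphi(hk^{-1})k)$... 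I'll instead verify directly that $A^n_K h\ast A^n_K k:=A^n_K\varphi(hk^{-1})k$ matches $P(hk^{-1}m_K k)$ after reducing the latter to a $\pi_1(M^n_K)$-representative via the procedure of the first step; since $hk^{-1}m_K k$ has $\Z/n\Z$-image $1$, its canonical $\pi_1(M^n_K)$-representative is obtained by conjugating away one $m_K$, yielding precisely $\varphi(hk^{-1})k$ up to an element of $A^n_K$. This identifies the operations and completes the proof.

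I expect the main obstacle to be the last step: making the translation between the two coset operations fully rigorous requires carefully tracking how the normal form $h\,m_K^{i}\mapsto \varphi^{-i}(h)$ interacts with right multiplication by $m_K$, and confirming that the ambiguity introduced (always a power of $l_K$, since $\varphi(l_K)=l_K$) is exactly the $A^n_K$-coset ambiguity and nothing more. Everything else—surjectivity and the computation of the fiber—is short given the split exact sequence and the commutation $[m_K,l_K]=1$ already recorded in the excerpt.
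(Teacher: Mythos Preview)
Your proposal is correct and follows essentially the same route as the paper. Both arguments exhibit the map $A^n_K h \mapsto Ph$ (the paper calls it $\Psi$) and verify it is a quandle isomorphism; the paper establishes bijectivity by writing down an explicit inverse $\Phi(Px)=A g_x$ using the normal form $x=m^{i_x}g_x$, while you establish it by showing surjectivity and computing the fiber as $P\cap\pi_1(M^n_K)=A^n_K$. The quandle homomorphism check is identical in content: $P(hk^{-1}m_K k)=Pm_K\varphi(hk^{-1})k=P\varphi(hk^{-1})k$, which is precisely the paper's computation for $\Psi$ (so your anticipated ``main obstacle'' in the last step is in fact a one-line calculation once you write it in the direction $\Psi$ rather than $\Phi$). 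One small slip: with $\varphi(g)=m_K^{-1}gm_K$ you have $m_K^{-i}hm_K^{i}=\varphi^{i}(h)$, not $\varphi^{-i}(h)$; this is inconsequential since that representative is never used again.
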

\begin{proof}
For notational simplicity, we denote $m_K, l_K$ and $A^n_K$ by $m,l$ and $A$, respectively. %in the proof. 
It is sufficient to show that 
$P\backslash G_n(K)$ and $A \backslash \pi_1(M^n_K)$ are quandle isomorphic. Since $G_n(K)$ is group isomorphic to $\pi_1(M^n_K)\rtimes\Z/n\Z$, for any $x\in G_n(K)$, there exist unique $i_x\in\{0,\ldots,n-1\}$ and $g_x\in\pi_1(M^n_K)$ such that $x=m^{i_x}g_x$. 
Define a map $\Phi \colon P\backslash G_n(K) \to A \backslash \pi_1(M^n_K)$ 
by $\Phi(Px):=A g_x$ for each $Px\in P\backslash G_n(K)$. 
%Let us define $\Phi$ as follows: $\Phi(Px):=A^n_Kg_x\ (Px\in P\backslash G_n(K))$. 
%
%Let us check the well-definedness of $\Phi$.
If $Px=Py$, we have $x=m^il^jy$ for some $i,j$. Hence, it holds that 
\[
%x=m_K^il_K^jy=m_K^il_K^jm_K^{i_y}g_y=m_K^im_K^{i_y}l_K^jg_y=m_K^{i+i_y}l_K^jg_y.
x=m^i l^j y=m^i l^j m^{i_y} g_y=m^i m^{i_y} l^j g_y=m^{i+i_y} l^j g_y.
\]
Thus, we have $\Phi(Px)=A l^j g_y=A g_y=\Phi(Py)$, which implies that 
the map $\Phi$ is well-defined. 
%$\Phi:P\backslash G_n(K)\to A^n_K\backslash \pi_1(M^n_K)$ is well defined. 

Next, 
define a map $\Psi \colon A \backslash \pi_1(M^n_K)\to P\backslash G_n(K)$ by 
$\Psi(A g):=Pg$ for each $A g\in A \backslash \pi_1(M^n_K)$. 
%let us define $\Psi$ as follows: $\Psi(A^n_Kg):=Pg\ (A^n_Kg\in A^n_K\backslash \pi_1(M^n_K))$. 
It is easily seen that 
the map $\Psi$ 
%$\Psi:A^n_K\backslash \pi_1(M^n_K)\to P\backslash G_n(K)$ 
is well-defined. By direct computations, it holds that $\Psi\circ\Phi={\rm Id}$ and $\Phi\circ\Psi={\rm Id}$. 

For any $A g_1,A g_2\in A \backslash \pi_1(M^n_K)$, we have
\begin{eqnarray*}
\Psi(A g_1\ast A g_2)&=&\Psi(A \varphi(g_1g_2^{-1})g_2)\\
&=&P\varphi(g_1g_2^{-1})g_2\\
&=&Pm^{-1}g_1g_2^{-1}mg_2\\
&=&Pg_1g_2^{-1}m g_2\\
&=&Pg_1\ast Pg_2=\Psi(A g_1)\ast\Psi(A g_2),
\end{eqnarray*}
which implies that $\Psi$ is a quandle isomorphism.
\end{proof}

%%%%%%%%%%%%%%%%%%%%%%%%%%%%%%%%%%%%%%%%%%%%%%%%%%%%%%%%
%\subsection{A central extension}
\subsection{Central extensions of the knot $n$-quandles}
\label{subsect:central_extension}
%Let $\tilde{X}$ and $X$ be quandles and $\Lambda$ be a group. Assume that $\Lambda$ acts on $\tilde{X}$ from the left. 
%A surjective quandle homomorphism $p:\tilde{X}\to X$ is a \textit{Galois covering} \cite{Eisermann2003unknot} if $\tilde{X}$ is connected and 
A quandle $\tilde{X}$ is an {\it extension} \cite{Eisermann2003unknot} of a quandle $X$ by a group $\Lambda$ if there is a surjective quandle homomorphism $p:\tilde{X}\to X$ and a left group action $\Lambda \curvearrowright \tilde{X}$ 
%of the group $\Lambda$ on the quandle $\tilde{X}$ 
such that the following conditions are satisfied: 
%An {\it extension} $E:\Lambda\curvearrowright \tilde{X}\overset{p}{\to}X$ of a quandle $X$ by a group $\Lambda$ consists of a surjective quandle homomorphism $p:\tilde{X}\to X$ and a group action $\Lambda\curvearrowright \tilde{X}$ satisfying the following conditions:
\begin{itemize}
\item[(E1)] For any $\tilde{x},\tilde{y}\in\tilde{X}$ and $\lambda\in\Lambda$, we have $\lambda\cdot(\tilde{x}\ast\tilde{y})=(\lambda\cdot\tilde{x})\ast\tilde{y}$ and $\tilde{x}\ast(\lambda\cdot\tilde{y})=\tilde{x}\ast\tilde{y}$.
\item[(E2)] For any $x\in X$, the group $\Lambda$ acts on the preimage $p^{-1}( x)$ freely and transitively.
\end{itemize}
% We call $\Lambda$ the \textit{transformation group} of a Galois covering $p$. 
If $\Lambda$ is abelian, we call $\tilde{X}$ a \textit{central extension} of $X$ by the group $\Lambda$.

A \textit{2-knot} $F$ is a 2-sphere embedded in $S^4$, and is \textit{fibered} if its complement $S^4\setminus F$ is a fiber bundle over $S^1$. 
For an oriented 2-knot $F$, we can define the knot quandle of $F$, denoted by $Q(F)$, in a similar manner to the knot quandle of an oriented $1$-knot. Inoue \cite{Inoue2019fibered} showed that the knot quandle of an oriented fibered 2-knot $F$ is quandle isomorphic to $\textrm{GAlex}(\pi_1(M),\psi)$, where $M$ is a fiber of the complement $S^4\setminus F$ and $\psi$ is the group automorphism of $\pi_1(M)$ induced by the monodromy of $S^4\setminus F$. 
%of the complement of $F$.

For an oriented 1-knot $K$ and an integer $n$, Zeeman \cite{Zeeman1965twisting} introduced the oriented $2$-knot $\tau^nK$, which is called the \textit{$n$-twist spun} $K$ or the \textit{$n$-twist spin} of $K$. Furthermore, for each positive integer $n$, he showed that $\tau^n K$ is a fibered 2-knot whose fiber is the once punctured $M^n_K$ and whose monodromy is the canonical generator of the covering transformation group of $M^n_K$. We note that the group automorphism of $\pi_1(M^n_K)$ induced by the monodromy coincides with $\varphi$ defined in Subsection~\ref{subsect:structure_n-quandle}.
The aim of this section is to show the following for an oriented $1$-knot $K$ and an integer $n>1$. 
%greater than $1$.  
\begin{theo}
\label{theo:central_extension}
%Let $K$ be an oriented nontrivial 1-knot and $n$ an integer greater than $1$. 
The knot quandle $Q(\tau^nK)$ of the oriented $2$-knot $\tau^n K$ is a central extension of the knot $n$-quandle $Q_n(K)$ of an oriented $1$-knot $K$ by the cyclic group $A^n_K$ for an integer $n>1$.
\end{theo}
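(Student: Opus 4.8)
The plan is to prove the theorem by passing to two concrete group-theoretic models of the quandles in play. On the one hand, $\tau^n K$ is a fibered $2$-knot whose fiber is the once punctured $M^n_K$ and whose monodromy induces the automorphism $\varphi$ of $\pi_1(M^n_K)$ (Zeeman~\cite{Zeeman1965twisting}), so Inoue's description~\cite{Inoue2019fibered} of the knot quandle of a fibered $2$-knot gives a quandle isomorphism $Q(\tau^n K)\cong\textrm{GAlex}(\pi_1(M^n_K),\varphi)$; that is, we may take the underlying set of $Q(\tau^n K)$ to be $\pi_1(M^n_K)$ with operation $x\ast y=\varphi(xy^{-1})y$. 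On the other hand, Proposition~\ref{prop:knot_n_qdle} identifies $Q_n(K)$ with $A^n_K\backslash\pi_1(M^n_K)$, equipped with the operation $A^n_K x\ast A^n_K y=A^n_K\varphi(xy^{-1})y$. With these models fixed, I would take $p\colon Q(\tau^n K)\to Q_n(K)$ to be the canonical projection $x\mapsto A^n_K x$, and let the cyclic group $A^n_K$ act on $Q(\tau^n K)$ by left multiplication, $a\cdot x:=ax$.

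It is then immediate from the two operation formulas that $p$ is a surjective quandle homomorphism, since $p(x\ast y)=A^n_K\varphi(xy^{-1})y=p(x)\ast p(y)$. The verification of axiom (E1) rests on the single observation that $\varphi$ restricts to the identity on $A^n_K$: indeed $\varphi(l_K)=l_K$ because $m_K$ and $l_K$ commute in $G(K)$, and $A^n_K=\langle l_K\rangle$. Granting this, for $a\in A^n_K$ one computes $(a\cdot x)\ast y=\varphi(axy^{-1})y=\varphi(a)\varphi(xy^{-1})y=a\cdot(x\ast y)$, and $x\ast(a\cdot y)=\varphi(xy^{-1}a^{-1})(ay)=\varphi(xy^{-1})\varphi(a^{-1})ay=\varphi(xy^{-1})y=x\ast y$, which is precisely (E1). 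For (E2), the preimage $p^{-1}(A^n_K x)$ is exactly the left coset $A^n_K x$, on which left multiplication by $A^n_K$ is free and transitive because $A^n_K$ is a subgroup of the group $\pi_1(M^n_K)$. Finally, $A^n_K$ is cyclic, hence abelian, so the extension is central and the theorem follows.

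Once the two models are in place the argument is essentially bookkeeping, so the only genuine point requiring care is the compatibility of the identifications — specifically, that the automorphism appearing in Inoue's generalized Alexander quandle description of $Q(\tau^n K)$ is the \emph{same} $\varphi$ as the one used in Subsection~\ref{subsect:structure_n-quandle} to build the isomorphism $Q_n(K)\cong A^n_K\backslash\pi_1(M^n_K)$. This is where the remark that the monodromy of $S^4\setminus\tau^n K$ induces $\varphi$ does the essential work: it lets the single group-theoretic formula $\varphi(xy^{-1})y$ serve simultaneously as the operation upstairs and, after passing to cosets modulo $A^n_K$, downstairs, so that $p$ is a homomorphism with no further computation. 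A secondary point worth flagging, though it does not affect this theorem, is that we are not yet asserting that $p$ coincides with the geometric quotient $Q(K)\to Q_n(K)$; only the abstract central extension structure is claimed here, and that abstract structure is all that Theorem~\ref{theo:universal_covering} and the homology computation will need.
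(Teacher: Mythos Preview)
Your proof is correct and follows essentially the same approach as the paper: both identify $Q(\tau^nK)$ with $\textrm{GAlex}(\pi_1(M^n_K),\varphi)$ via Inoue's theorem and $Q_n(K)$ with $A^n_K\backslash\pi_1(M^n_K)$ via Proposition~\ref{prop:knot_n_qdle}, take $p$ to be the coset projection and let $A^n_K$ act by left multiplication, and then verify (E1) and (E2) using the single fact $\varphi(l_K)=l_K$. Your additional remark about the compatibility of the two occurrences of $\varphi$ makes explicit what the paper records just before the theorem statement.
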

\begin{proof}
%Let $A^n_K$ be the subgroup of $\pi_1(M^n_K)$ generated by $l_K$, and 
Let $p:\pi_1(M^n_K)\to A^n_K\backslash\pi_1(M^n_K)$ be the quotient map. 
By \cite[Theorem~3.1]{Inoue2019fibered}, the quandle $Q(\tau^nK)$ 
%of $\tau^n K$ 
is quandle isomorphic to $\textrm{GAlex}(\pi_1(M^n_K),\varphi)$. Hence, by Proposition~\ref{prop:knot_n_qdle}, the map $p$ is a surjective quandle homomorphism from $Q(\tau^nK)$ to $Q_n(K)$. We define the action of $A^n_K$ on $\pi_1(M^n_K)$ by $l_K^i\cdot x=l_K^ix$ for any $i\in \Z$ and $x\in\pi_1(M^n_K)$. 
%We will show that the conditions $(\textrm{E1})$ and  $(\textrm{E2})$ are satisfied. 
Obviously, the condition $(\textrm{E2})$ is satisfied. Since it holds that $\varphi(l_K)=l_K$, we have 
\[
(l_K\cdot x)\ast y=\varphi(l_Kxy^{-1})y=l_K\varphi(xy^{-1})y=l_K\cdot(x\ast y)
\]
and
\[
x\ast(l_K\cdot y)=\varphi(x(l_Ky)^{-1})(l_Ky)=\varphi(xy^{-1}l_K^{-1})l_Ky=\varphi(xy^{-1})y=x\ast y
\]
for each $x,y\in \pi_1(M^n_K)=Q(\tau^nK)$. This implies that the condition $(\textrm{E1})$ is also satisfied. 
\end{proof}
\begin{remark}
%\textrm{
If $l_K$ is the identity element in $\pi_1(M^n_K)$, then the knot quandle of $\tau^nK$ is quandle isomorphic to the knot $n$-quandle of $K$.
%}
\end{remark}

%%%%%%%%%%%%%%%%%%%%%%%%%%%%%%%%%%%%%%%%%%%%%%%%%%%%%%%%
\section{Universal coverings of the knot $n$-quandles}
\label{sect:universal_covering}
We review covering theory for quandles 
%the definitions of a quandle covering 
and the fundamental groups of quandles in Subsection~\ref{subsect:def_covering_and_simply_connect}. 
We investigate a relation between the types of quandles and quandle coverings in Subsection~\ref{subsect:type}. 
Then, in Subsection~\ref{subsect:universal_covering_and_simply_connect}, we show the surjective quandle homomorphism $Q(\tau^n K)\to Q_n(K)$ associated with the central extension $A^n_K \curvearrowright Q(\tau^n K) \to Q_n(K)$ %defined in Section~\ref{sect:central_extension} 
is a universal covering (Theorem~\ref{theo:universal_covering}).

%%%%%%%%%%%%%%%%%%%%%%%%%%%%%%%%%%%%%%%%%%%%%%%%%%%%%%%%
\subsection{Covering theory for quandles}
%\subsection{A quandle covering and the fundamental group of a quandle}
\label{subsect:def_covering_and_simply_connect}
%Let $\tilde{X}$ and $X$ be quandles. 
A quandle homomorphism $p:\tilde{X}\to X$ between quandles is a \textit{covering} if $p$ is surjective and 
%
%satisfies the following condition:
%\begin{itemize}
%\item 
$p(\tilde{x})=p(\tilde{y})$ implies $\alpha\ast\tilde{x}=\alpha\ast\tilde{y}$ 
for any $\alpha,\tilde{x},\tilde{y}\in\tilde{X}$.
%If $p(\tilde{x})=p(\tilde{y})$, it holds that $\alpha\ast\tilde{x}=\alpha\ast\tilde{y}$ for any $\alpha\in\tilde{X}$.
%\end{itemize}
%
%We can verify that a Galois covering $p:\tilde{X}\to X$ is a covering. 
%Let $p:\tilde{X}\to X$ be a covering. A {\it covering automorphism} of $p$ is a quandle isomorphism $\phi:\tilde{X}\to\tilde{X}$ such that $p\circ\phi=p$. We denote the set of all covering automorphisms of $p$ by ${\rm Aut}(p)$. A covering $p:\tilde{X}\to X$ is a {\it Galois} if $\tilde{X}$ is connected and for any $x\in X$, the group ${\rm Aut}(p)$ acts transitively on the fiber $p^{-1}(\{ x\})$. 
%
This property allows to define an action of $X$ on $\tilde{X}$ by $\alpha * x:=\alpha * \tilde{x}$ with $\tilde{x} \in p^{-1}(x)$ for $x \in X$ and $\alpha \in \tilde{X}$, where this action will be used in the proof of Theorem~\ref{theo:simply_connect_twist_spun}.  
Here is an important example of a covering. For an extension $\tilde{X}$ is of a quandle $X$ by a group $\Lambda$, there is a surjective quandle homomorphism $p:\tilde{X}\to X$ associated with the extension.
Then we can verify that $p:\tilde{X}\to X$ is a covering. 

A \textit{pointed quandle} $(X,x)$ is a quandle $X$ with a chosen base element $x$ of $X$. A map $f:(X,x)\to (X^\prime,x^\prime)$ between pointed quandles is a {\it quandle homomorphism} (resp.~{\it covering}) if $f$ is a quandle homomorphism (resp.~covering) from $X$ to $X^\prime$ such that $f(x)=x^\prime$.
A quandle covering $f:(Q,q)\to (X,x)$ is said to be a \textit{universal covering} of $(X,x)$ if for any quandle covering $p:(\tilde{X},\tilde{x})\to (X,x)$, there exists a unique quandle homomorphism $\tilde{f}: (Q,q) \to (\tilde{X},\tilde{x})$ such that $p\circ\tilde{f}=f$. 
%$f=p\circ\tilde{f}$. 
%A quandle covering $p:(\tilde{X},\tilde{x})\to (X,x)$ is a \textit{universal covering} of a pointed quandle $(X,x)$ if for any quandle covering $\hat{p}:(\hat{X},\hat{x})\to (X,x)$, there exists a unique quandle homomorphism $\phi:(\tilde{X},\tilde{x})\to(\hat{X},\hat{x})$ such that $p=\hat{p}\circ\phi$. 
 
%Let $X$ be a quandle, and let $x$ be an element of $X$. 
The \textit{fundamental group} of a quandle $X$ based at an element $x$ of $X$, 
which was introduced in \cite{Eisermann2014quandle}, 
%and denoted by $\pi_1(X,x)$, 
is defined by $\{g\in \textrm{As}(X)^{\prime}\mid x\cdot g=x\}$, where $\textrm{As}(X)^{\prime}$ is the commutator subgroup of $\textrm{As}(X)$. 
The fundamental group of $X$ based at $x$ is denoted by $\pi_1(X,x)$. 
We note that $\pi_1(X,x)$ does not depend on the choice of the base point $x$ whenever $X$ is connected, and that $\pi_1(X,x)$ is sometimes denoted by $\pi_1(X)$ for a connected quandle $X$. 
A quandle $X$ is said to be \textit{simply connected} if $X$ is connected and $\pi_1(X,x)$ is trivial for some $x$ in $X$. 
%for any $x\in X$. 
%
We recall here %the proposition due to Eisermann 
\cite[Proposition~5.15]{Eisermann2014quandle}, which characterizes the simply connectedness of quandles in terms of universal coverings. 
Since the treatment of base points of pointed quandles is somewhat ambiguous in \cite[Proposition~5.15]{Eisermann2014quandle}, we modify his condition appropriately as follows. 
%where we modify the proposition slightly as follows:  
%Eisermann \cite{Eisermann2014quandle} showed that if a quandle $\tilde{X}$ is simply connected, then any covering from $p:(\tilde{X},\tilde{x})\to (X,x)$ is a universal covering of $(X,x)$. 
%He also characterized the simply connectedness of a quandle in \cite[Proposition~5.15]{Eisermann2014quandle}, where we modify his characterization slightly. 

\begin{prop}\label{prop:simply_connect_equivalent}
%[cf. \cite{Eisermann2014quandle}]
The following are equivalent for a quandle $Q$. 
%Let $Q$ be a quandle. Then the following are equivalent:
%{\setlength{\leftmargini}{7mm} 
\begin{itemize}
\item[$(1)$] The quandle $Q$ is simply connected.
\item[$(2)$] 
There exists an element $q$ in $Q$ such that
%, for a pointed quandle $(X,x)$,   
any covering from $(Q,q)$ to any pointed quandle $(X,x)$ 
%$f:(Q,q)\to (X,x)$ 
is a universal covering of $(X,x)$. 
\end{itemize}
%}
\end{prop}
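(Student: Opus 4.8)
The plan is to obtain both implications as formal consequences of the covering theory of connected quandles from \cite{Eisermann2014quandle}. The inputs I would use are: (i) the lifting criterion for quandle coverings, namely that for coverings $f\colon(Y,y)\to(X,x)$ and $p\colon(\tilde X,\tilde x)\to(X,x)$ there is a quandle homomorphism $\tilde f\colon(Y,y)\to(\tilde X,\tilde x)$ with $p\circ\tilde f=f$ if and only if $f_\ast\pi_1(Y,y)\subseteq p_\ast\pi_1(\tilde X,\tilde x)$, and that such a lift is then unique; and (ii) the existence, for a connected quandle $X$, of a universal covering $\hat p\colon(\hat X,\hat x)\to(X,x)$ whose total space $\hat X$ is simply connected. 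I will also use that $\pi_1$ of a connected quandle is independent of the base point and that a covering $f\colon Q\to X$ is surjective, so that $X$ is connected whenever $Q$ is.

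For $(1)\Rightarrow(2)$: assume $Q$ is simply connected and fix any $q\in Q$. Let $f\colon(Q,q)\to(X,x)$ be a covering and $p\colon(\tilde X,\tilde x)\to(X,x)$ an arbitrary covering. Since $\pi_1(Q,q)=\{1\}$, the hypothesis $f_\ast\pi_1(Q,q)\subseteq p_\ast\pi_1(\tilde X,\tilde x)$ of the lifting criterion holds trivially, so there is a unique $\tilde f$ with $p\circ\tilde f=f$. Hence $f$ is a universal covering of $(X,x)$, which is $(2)$.

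For $(2)\Rightarrow(1)$: let $q\in Q$ be as in $(2)$. First I would show $Q$ is connected. If not, let $Q_0$ be the orbit of $q$ and $Q_1\neq Q_0$ another orbit of the action of $\mathrm{As}(Q)$ on $Q$; set $\tilde X:=(Q\times\{1\})\cup(Q_1\times\{0\})$ with $(a,i)\ast(b,j):=(a\ast b,i)$, and let $p\colon\tilde X\to Q$ be the projection. A direct check shows $\tilde X$ is a quandle, $p$ a surjective quandle homomorphism, and $p$ a covering: its only fibres of size two lie over $Q_1$, and there $S_{(a,0)}=S_{(a,1)}$ because the operation ignores the second coordinate of its right-hand argument. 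A section of $p$ through $(q,1)\in p^{-1}(q)$ amounts to a function $\epsilon\colon Q\to\{0,1\}$ that is constant on $\mathrm{As}(Q)$-orbits, equals $1$ off $Q_1$, and has $\epsilon(q)=1$; as $q\notin Q_1$ the last requirement is vacuous, so both $\epsilon\equiv 1$ and the $\epsilon$ with $\epsilon|_{Q_1}\equiv 0$ give sections, i.e.\ two distinct lifts of $\mathrm{id}_Q\colon(Q,q)\to(Q,q)$ through $(q,1)$. This contradicts $(2)$ applied to the covering $\mathrm{id}_Q$, so $Q$ is connected. Now let $\hat p\colon(\hat Q,\hat q)\to(Q,q)$ be the universal covering of the connected quandle $Q$ from \cite{Eisermann2014quandle}, so $\hat Q$ is simply connected. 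By $(2)$, $\mathrm{id}_Q$ is a universal covering of $(Q,q)$, so it lifts along $\hat p$ through $\hat q$; the lifting criterion then forces $\pi_1(Q,q)=(\mathrm{id}_Q)_\ast\pi_1(Q,q)\subseteq\hat p_\ast\pi_1(\hat Q,\hat q)=\{1\}$, hence $Q$ is simply connected.

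I expect the connectedness step of $(2)\Rightarrow(1)$ to be the main obstacle, as it is the only part not supplied directly by \cite{Eisermann2014quandle}: from disconnectedness alone one must produce an explicit covering of $(Q,q)$ with a non-unique pointed lift, and checking the quandle and covering axioms for the ``doubled orbit'' construction above, though routine, is the crux. Everything else reduces to the lifting criterion and the simply connected universal covering. Finally, it is worth recording how base points enter: in $(1)\Rightarrow(2)$ every $q$ works, whereas in $(2)\Rightarrow(1)$ only the given $q$ is used, and only through the single covering $\mathrm{id}_Q$ — which is precisely the distinction from \cite[Proposition~5.15]{Eisermann2014quandle}.
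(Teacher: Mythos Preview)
Your argument is correct. Both directions go through: for $(1)\Rightarrow(2)$ the lifting criterion of \cite{Eisermann2014quandle} applies directly since $Q$ is connected and $\pi_1(Q,q)$ is trivial; for $(2)\Rightarrow(1)$ your ``doubled orbit'' quandle $\tilde X=(Q\times\{1\})\cup(Q_1\times\{0\})$ is indeed a quandle and a covering of $Q$, the two sections you write down are genuine pointed quandle homomorphisms (because $\epsilon$ is constant on $\mathrm{As}(Q)$-orbits and the operation on $\tilde X$ ignores the right-hand second coordinate), and once $Q$ is known to be connected the section of the simply connected universal cover forces $\pi_1(Q,q)=\{1\}$ via the ``only if'' half of the lifting criterion.

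As for the comparison: the paper does not give a proof of this proposition at all. It presents the statement as a base-point-corrected restatement of \cite[Proposition~5.15]{Eisermann2014quandle} and leaves the justification to that reference. (An earlier draft, visible only in the commented-out \LaTeX, argued via a three-condition version, reducing the surjective case to Eisermann's original statement by restricting to the image; the connectedness of $Q$ was obtained there by citing \cite[Remark~3.3]{Eisermann2014quandle}.) Your write-up therefore supplies substantially more than the paper: in particular your explicit obstruction to disconnectedness is an elementary construction that avoids appealing to any black box, at the cost of a short but genuine verification of the quandle and covering axioms. The rest of your argument uses exactly the pieces of \cite{Eisermann2014quandle} one would expect.
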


\subsection{Coverings and the types of quandles}\label{subsect:type}
We investigate a relation between the type of a quandle and that of another quandle, when there is a quandle covering from one to the other. 

\begin{lemm}
\label{lemm:type_covering}
%Let $X,\tilde{X}$ be quandles. 
For quandles $X$ and $\tilde{X}$, 
suppose that there is a surjective quandle homomorphism $p:\tilde{X}\to X$. 
If the type of $\tilde{X}$ is finite, 
then  the type of $X$ is a divisor of that of $\tilde{X}$. 
%then the type of $X$ is finite and is a divisor of that of $\tilde{X}$. 
\end{lemm}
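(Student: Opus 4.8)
The plan is to package the type of a quandle $Y$ as the positive generator of a subgroup of $\Z$, and then to show that this subgroup can only enlarge under a surjective quandle homomorphism. For a quandle $Y$, set
\[
I_Y := \{\, n \in \Z \mid y \ast^n z = y \text{ for all } y,z \in Y \,\}.
\]
First I would observe that $I_Y$ is a subgroup of $\Z$: the condition ``$y \ast^n z = y$ for all $y$'' says exactly that $S_z^{\,n} = \mathrm{id}_Y$, and since each $S_z$ is a bijection, $S_z^{\,n} = S_z^{\,m} = \mathrm{id}_Y$ forces $S_z^{\,n-m} = \mathrm{id}_Y$; also $0 \in I_Y$. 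Hence $I_Y = d_Y\Z$ for a unique integer $d_Y \ge 0$, and by the very definition of type, $\textrm{type}(Y) = d_Y$ when $d_Y > 0$ while $\textrm{type}(Y) = \infty$ when $d_Y = 0$. So ``$\textrm{type}(Y)$ is finite'' is equivalent to ``$I_Y \ne \{0\}$'', and in that case $\textrm{type}(Y)$ is precisely the positive generator of $I_Y$.

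Next I would transport the hypothesis along $p$. Since $p$ is a quandle homomorphism we have $p \circ S_{\tilde y} = S_{p(\tilde y)} \circ p$ for every $\tilde y \in \tilde X$, hence $p(\tilde x \ast^n \tilde y) = p(\tilde x) \ast^n p(\tilde y)$ for every integer $n \ge 0$ and all $\tilde x, \tilde y \in \tilde X$. Write $N := \textrm{type}(\tilde X)$, which is finite by hypothesis, so $\tilde x \ast^N \tilde y = \tilde x$ for all $\tilde x, \tilde y \in \tilde X$. Given $x,y \in X$, use surjectivity of $p$ to pick $\tilde x, \tilde y \in \tilde X$ with $p(\tilde x) = x$ and $p(\tilde y) = y$; then $x \ast^N y = p(\tilde x \ast^N \tilde y) = p(\tilde x) = x$. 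As $x,y$ were arbitrary, this shows $N \in I_X$ (more generally the same computation gives $I_{\tilde X} \subseteq I_X$).

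Finally I would conclude: since $N \ge 1$ lies in $I_X$, the subgroup $I_X$ is non-trivial, so $\textrm{type}(X)$ is finite and equals the positive generator $d_X$ of $I_X$; and from $N \in I_X = d_X\Z$ we get $d_X \mid N$, that is, $\textrm{type}(X) \mid \textrm{type}(\tilde X)$. I do not anticipate a genuine obstacle here — the statement is elementary once the type is recognised as the generator of the subgroup $I_Y \le \Z$. The only points needing care are the bookkeeping around the convention $\min \emptyset = \infty$ (one must first verify that $\textrm{type}(X)$ is \emph{finite}, which falls out of $N \in I_X$, before the word ``divisor'' is meaningful) and the remark that only surjectivity of $p$, not any covering property, is used.
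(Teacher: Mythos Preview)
Your proof is correct and follows essentially the same approach as the paper: choose preimages under the surjection, use that $p$ commutes with the iterated operation to deduce $a\ast^{\tilde m}b=a$ for all $a,b\in X$, and conclude divisibility. Your packaging via the subgroup $I_Y\le\Z$ is a slight elaboration, but the core computation is identical to the paper's.
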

\begin{proof}
For each two elements $a,b$ in $X$, we have $p^{-1}(\tilde{a}) \neq \emptyset$ and $p^{-1}(\tilde{b}) \neq \emptyset$ by the assumption, and choose two elements $\tilde{a},\tilde{b}$ in $\tilde{X}$ such that 
$\tilde{a}\in p^{-1}(a)$ and $\tilde{b}\in p^{-1}(b)$. Then we have 
\[
a\ast^{\tilde{m}} b=p(\tilde{a})\ast^{\tilde{m}}p(\tilde{b})=p(\tilde{a}\ast^{\tilde{m}}\tilde{b})=p(\tilde{a})=a,
\]
%which implies the assertion.
where $\tilde{m}$ denotes the type of $\tilde{X}$. The equation $a\ast^{\tilde{m}}b=a$ implies the assertion.
\end{proof}

\begin{prop}
\label{prop:type_covering}
%Let $X,\tilde{X}$ be quandles. 
For quandles $X$ and $\tilde{X}$, 
suppose that there is a quandle covering $p:\tilde{X}\to X$. 
If the type of $X$ is finite and $X$ is connected, 
then the type of $\tilde{X}$ is equal to that of $X$. 
%then the type of $\tilde{X}$ is finite and is equal to that of $X$. 
\end{prop}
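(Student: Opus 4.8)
The plan is to show the two inequalities $\mathrm{type}(\tilde X) \mid \mathrm{type}(X)$ and $\mathrm{type}(X) \mid \mathrm{type}(\tilde X)$, which together with finiteness of both will force equality (and finiteness of $\mathrm{type}(\tilde X)$ comes along the way). One direction is already essentially done: Lemma~\ref{lemm:type_covering} gives $\mathrm{type}(X) \mid \mathrm{type}(\tilde X)$ provided we first know $\mathrm{type}(\tilde X)$ is finite, so the real content is to prove $\mathrm{type}(\tilde X)$ is finite and in fact divides $\mathrm{type}(X)$.

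For that, write $m = \mathrm{type}(X)$ and fix $\tilde x \in \tilde X$. First I would note that for \emph{any} $\tilde a \in \tilde X$ we have $p(\tilde a \ast^m \tilde x) = p(\tilde a) \ast^m p(\tilde x) = p(\tilde a)$, so $\tilde a \ast^m \tilde x$ and $\tilde a$ lie in the same fibre of $p$. This is where the covering hypothesis enters: since $p(\tilde a \ast^m \tilde x) = p(\tilde a)$, the defining property of a covering yields $\beta \ast (\tilde a \ast^m \tilde x) = \beta \ast \tilde a$ for every $\beta \in \tilde X$. In other words, $\tilde a \ast^m \tilde x$ and $\tilde a$ act identically on $\tilde X$ from the right, i.e.\ they have the same image in $\mathrm{As}(\tilde X)$ acting on $\tilde X$. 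The goal is to upgrade ``same fibre'' to ``equal'', at least after fixing things up by the group action, and then iterate over the (finitely many, since $m$ is fixed) possible discrepancies.

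The cleanest route is to use connectedness of $X$ together with the covering property to pin down the fibre. Concretely: connectedness of $X$ means $\mathrm{As}(X)$ acts transitively on $X$; lifting the relations, the action of (a suitable subgroup of) $\mathrm{As}(\tilde X)$ is transitive on each fibre, so the fibres of $p$ are the orbits of the subgroup generated by the ``column'' permutations that are trivial downstairs, and by the covering axiom these column permutations act trivially on all of $\tilde X$. Hence if $p(\tilde y) = p(\tilde z)$ then $\tilde y$ and $\tilde z$ act the same way on $\tilde X$, and in particular $\tilde a \ast^m \tilde x$ acts on $\tilde X$ exactly as $\tilde x$ does. Therefore $\tilde a \ast^m \tilde x = \tilde a \ast^0 (\tilde a \ast^m \tilde x)$\dots{} more usefully: applying this with varying elements, $S_{\tilde x}^m$ is a permutation of $\tilde X$ that becomes the identity after projecting, and since $p$ is a covering the map $\tilde a \mapsto \tilde a \ast^m \tilde x$ agrees with $\tilde a \mapsto \tilde a \ast^m \tilde y$ for any $\tilde y$ in the fibre of $\tilde x$; combined with $\tilde x \ast^m \tilde x = \tilde x$ (apply the fixed-point axiom $\lvert m\rvert$ times) this should force $\tilde a \ast^m \tilde x = \tilde a$ for all $\tilde a$, giving $\mathrm{type}(\tilde X) \mid m$.

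The step I expect to be the main obstacle is exactly this last upgrade: turning ``$\tilde a \ast^m \tilde x$ lies in the same fibre as $\tilde a$ and acts identically on $\tilde X$'' into the honest equality ``$\tilde a \ast^m \tilde x = \tilde a$''. Acting identically on $\tilde X$ does not by itself imply equality (two elements of a quandle can induce the same inner automorphism), so one must genuinely exploit that $p$ is a covering \emph{and} that $X$ is connected — presumably by observing that within a single fibre the covering condition forces $\beta \ast \tilde y$ to be independent of the choice of $\tilde y$ in that fibre, and then using connectedness to propagate a single coincidence $\tilde x \ast^m \tilde x = \tilde x$ across the whole fibre and hence across $\tilde X$. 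Once that is in hand, Lemma~\ref{lemm:type_covering} supplies the reverse divisibility and we are done; I would also remark that connectedness is essential, since without it a covering can be a disjoint union on which the type jumps.
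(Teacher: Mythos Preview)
Your outline correctly isolates the crux --- upgrading ``$\tilde a *^m \tilde x$ lies in the same fibre as $\tilde a$'' to ``$\tilde a *^m \tilde x = \tilde a$'' --- but then does not actually resolve it. The heuristic you offer (same fibre implies same right-action on $\tilde X$, plus the fixed point $\tilde x *^m \tilde x = \tilde x$, plus some vaguely described propagation via connectedness) does not produce a proof: two distinct elements of a quandle can induce the same inner automorphism, and nothing in your sketch forces $S_{\tilde x}^m$ to be the identity rather than a nontrivial fibre-preserving self-map. You also implicitly assume you can ``propagate'' across $\tilde X$, but the hypothesis is only that $X$ is connected; $\tilde X$ need not be, so there is no single base point from which to propagate anything.

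The paper closes this gap by a detour through the universal covering $\bar p:\bar X\to X$. The point is that $\bar X$ \emph{is} connected (by Eisermann's structure theory), so Eisermann's uniqueness-of-lifts result (his Corollary~4.11) applies: $S_{\bar b}^m:\bar X\to\bar X$ is a quandle endomorphism satisfying $\bar p\circ S_{\bar b}^m=\bar p$ and fixing $\bar b$, hence equals the identity, giving $\mathrm{type}(\bar X)\mid m$. To get from $\bar X$ to the given $\tilde X$ (which may be disconnected), the paper invokes Eisermann's classification of coverings to exhibit a surjective quandle homomorphism $I\times\bar X\to\tilde X$, where $I=p^{-1}(x)$ carries the trivial operation; since $\mathrm{type}(I\times\bar X)=\mathrm{type}(\bar X)$, Lemma~\ref{lemm:type_covering} yields $\mathrm{type}(\tilde X)\mid m$. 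The reverse divisibility then follows from Lemma~\ref{lemm:type_covering} as you said. So the missing ingredients in your proposal are precisely the uniqueness-of-lifts statement for connected source and the $I\times\bar X$ construction that reduces the possibly disconnected $\tilde X$ to the connected $\bar X$.
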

\begin{proof}
For an element $x \in X$, 
let 
%$\bar{p} \colon \bar{X} \to X$
$\bar{p}:(\bar{X},\bar{x})\to (X,x)$ 
be the universal covering of the connected pointed quandle $(X,x)$, where $\bar{X}$ is also connected by \cite[Lemma~5.2 and Theorem~5.3]{Eisermann2014quandle}. 
For notational simplicity, we denote $\mathrm{type}(X)$, $\mathrm{type}(\tilde{X})$ and $\mathrm{type}(\bar{X})$ by $m$, $\tilde{m}$ and $\bar{m}$, respectively. 

We show that $\bar{m} \mid m$. 
%${\rm type}(\bar{X}) \mid {\rm type}(X)$, that is, ${\rm type}(\bar{X})$ is a divisor of ${\rm type}(X)$. 
%Let $m$ be the type of $X$ and 
For any elements $\bar{a}, \bar{b} \in \bar{X}$, we have $\bar{p}(\bar{a}\ast^m \bar{b})=\bar{p}(\bar{a})\ast^m \bar{p}(\bar{b})=\bar{p}(\bar{a})$. 
%Let $\bar{a}$ be an element of $\bar{X}$. Then we have $\bar{p}(\bar{a}\ast^m \bar{b})=\bar{p}(\bar{a})\ast^m \bar{p}(\bar{b})=\bar{p}(\bar{a})$ for any $\bar{b}\in \bar{X}$. 
This implies that the map $S^m_{\bar{b}}:\bar{X}\to \bar{X}$ is a quandle homomorphism satisfying $\bar{p}=\bar{p} \circ S^m_{\bar{b}}$. %for any element $\bar{b} \in \bar{X}$. 
%This implies that the map $S^m_{\bar{b}}:\bar{X}\to \bar{X}$ is a covering automorphism of a connected covering $\bar{p}:\bar{X}\to X$ for any $\bar{b}\in \bar{X}$. 
By \cite[Corollary~4.11]{Eisermann2014quandle}, we see that $S^m_{\bar{b}}={\rm Id}_{\bar{X}}$, 
%that is, $\bar{a}\ast^m \bar{b} = \bar{a}$ for any $\bar{a},\bar{b} \in \bar{X}$, 
since $S^m_{\bar{b}}$ stabilizes the element $\bar{b} \in \bar{X}$ and $\bar{X}$ is connected. 
Thus we have $\bar{m} \mid m$ and in particular $\bar{m}<\infty$.
%Thus, it holds that $\bar{m} \mid m$. 
%${\rm type}(\bar{X}) \mid m$. 

We set $I:=p^{-1}(x)$. We define a quandle operation $I\times\bar{X}$ by $(i,\bar{a})\ast(j,\bar{b})=(i,\bar{a}\ast\bar{b})$ for $i,j\in I$ and $\bar{a}, \bar{b} \in \bar{X}$. Then by \cite[Proposition 5.6 and Lemma~6.4]{Eisermann2014quandle}, the fundamental group $\pi_1(X,x)$ acts on $\bar{X}$ from the left, and on $I$ from the right. We consider an equivalence relation, denoted by $\sim$, on $I\times\bar{X}$ defined by $(i\cdot g,\bar{a})\sim(i,g\cdot\bar{a})$ for $i\in I,\bar{a}\in\bar{X}$ and $g\in\pi_1(X,x)$. Then the quotient set $I\times\bar{X}/\sim$ has a quandle operation inherited from $I\times\bar{X}$. It follows from \cite[Theorem 6.6]{Eisermann2014quandle} that the quandle $\tilde{X}$ is quandle isomorphic to $I\times\bar{X}/\sim$. This implies that there is a surjective quandle homomorphism $f:I\times\bar{X}\to\tilde{X}$.

We can easily see that ${\rm type}(I\times \bar{X})={\rm type}(\bar{X}) = \bar{m}$ by the definition of the quandle $I\times \bar{X}$. 
%Let $\tilde{m}$ be the type of $\tilde{X}$. 
It follows from $\bar{m} < \infty$ that Lemma~\ref{lemm:type_covering} can be applied for the surjective quandle homomorphism $f \colon I\times \bar{X} \to \tilde{X}$. 
Then it holds that $\tilde{m} \mid \bar{m}$, and further by $\bar{m} \mid m$ we have $\tilde{m} \mid m$ and in particular $\tilde{m}<\infty$.
%by $\bar{m} \mid m$. 
It also follows from $\tilde{m}<\infty$ that Lemma~\ref{lemm:type_covering} can be applied for the covering $p:\tilde{X}\to X$. 
Then it holds that $m \mid \tilde{m}$, and further by $\tilde{m} \mid m$ we have $\tilde{m} = m$. 
%by $\tilde{m} \mid m$.  
 \end{proof}

\begin{cor}\label{cor:type_covering}
For quandles $X$ and $\tilde{X}$, 
%Let $X,\tilde{X}$ be quandles. 
suppose that there is a quandle covering $p:\tilde{X}\to X$. 
If the type of $\tilde{X}$ is finite and $\tilde{X}$ is connected, 
then the type of $X$ is equal to that of $\tilde{X}$. 
\end{cor}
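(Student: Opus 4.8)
The plan is to derive the corollary directly from Lemma~\ref{lemm:type_covering} and Proposition~\ref{prop:type_covering}; the one point not already contained in those statements is that the base quandle $X$ must be connected, which I would check first.

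To see that $X$ is connected, recall that $p\colon\tilde{X}\to X$ is surjective and $\tilde{X}$ is connected. Given $a,b\in X$, choose $\tilde{a}\in p^{-1}(a)$, $\tilde{b}\in p^{-1}(b)$, and an element $g\in\mathrm{As}(\tilde{X})$ with $\tilde{a}\cdot g=\tilde{b}$. Writing $g$ as a word in the elements of $\tilde{X}$ and applying the group homomorphism $\mathrm{As}(p)\colon\mathrm{As}(\tilde{X})\to\mathrm{As}(X)$ induced by the functor $\mathrm{As}$, and using that $p$ is a quandle homomorphism, we obtain $a\cdot\mathrm{As}(p)(g)=p(\tilde{a}\cdot g)=p(\tilde{b})=b$. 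Hence $\mathrm{As}(X)$ acts transitively on $X$, i.e.\ $X$ is connected.

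Next, since $p$ is in particular a surjective quandle homomorphism and $\mathrm{type}(\tilde{X})$ is finite, Lemma~\ref{lemm:type_covering} shows that $\mathrm{type}(X)$ divides $\mathrm{type}(\tilde{X})$; in particular $\mathrm{type}(X)<\infty$. Now $X$ is a connected quandle of finite type, so Proposition~\ref{prop:type_covering} applies to the covering $p\colon\tilde{X}\to X$ and gives $\mathrm{type}(\tilde{X})=\mathrm{type}(X)$, which is the assertion.

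The argument is entirely routine, and I do not expect a serious obstacle: the corollary is essentially Proposition~\ref{prop:type_covering} with its hypothesis ``$X$ is connected and of finite type'' replaced by ``$\tilde{X}$ is connected and of finite type.'' The only mildly delicate step is recording that connectedness and finiteness of type descend from $\tilde{X}$ to $X$ along the surjection $p$, as done above, after which one simply invokes the proposition.
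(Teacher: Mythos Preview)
Your proof is correct and follows essentially the same approach as the paper's own proof: first observe that surjectivity of $p$ and connectedness of $\tilde{X}$ force $X$ to be connected, then use Lemma~\ref{lemm:type_covering} to get finiteness of $\mathrm{type}(X)$, and finally apply Proposition~\ref{prop:type_covering}. The only difference is that you spell out the connectedness argument via $\mathrm{As}(p)$, whereas the paper simply asserts it.
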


\begin{proof}
It follows from Lemma~\ref{lemm:type_covering} that the type of $X$ is finite. 
Since $p$ is surjective and the source $\tilde{X}$ of $p$ is connected, the target $X$ of $p$ is also connected. 
Then we can apply Proposition~\ref{prop:type_covering} for the covering $p$. 
\end{proof}

%%%%%%%%%%%%%%%%%%%%%%%%%%%%%%%%%%%%%%%%%%%%%%%%%%%%%%%%
\subsection{Universal coverings of the knot $n$-quandles}
\label{subsect:universal_covering_and_simply_connect}
%The aim of this section is to show the following 
For an oriented $1$-knot $K$ and an integer $n>1$, %greater than $1$, 
we show the following proposition needed to prove that the quandle covering associated with the central extension $A^n_K \curvearrowright Q(\tau^n K) \to Q_n(K)$ is a universal covering.

\begin{theo}
\label{theo:simply_connect_twist_spun}
%Let $K$ be a nontrival oriented $1$-knot and $n$ be an integer greater than $1$. Then 
The knot quandle $Q(\tau^nK)$ of the $2$-knot $\tau^nK$ is simply connected for an oriented $1$-knot $K$ and an integer $n>1$.
\end{theo}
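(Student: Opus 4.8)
The plan is to show directly that $\pi_1(Q(\tau^nK),\ast) = 0$ by combining the explicit model $Q(\tau^nK) \cong \mathrm{GAlex}(\pi_1(M^n_K),\varphi)$ from \cite{Inoue2019fibered} with the description of the fundamental group of a quandle as a stabilizer inside the commutator subgroup of the associated group. Write $G := \pi_1(M^n_K)$ and $X := \mathrm{GAlex}(G,\varphi)$, with underlying set $G$ and operation $x\ast y = \varphi(xy^{-1})y$. First I would record what the associated group $\mathrm{As}(X)$ looks like: there is a natural homomorphism $\mathrm{As}(X)\to G\rtimes_\varphi\Z$ sending a generator $x\in X$ to $(x,1)$ (this respects the quandle relations since conjugation by $(y,1)$ sends $(x,1)$ to $(\varphi(xy^{-1})y,1)$), and one checks it is in fact an isomorphism — the point being that $G\rtimes_\varphi\Z$ is generated by the elements $(x,1)$, $x\in G$, and that the defining relations of $\mathrm{As}(X)$ are exactly the ones forced in the semidirect product. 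Under this identification the right action of $\mathrm{As}(X)$ on $X$ is the one induced from $x\cdot(y,1) = \varphi(xy^{-1})y$.

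Next I would identify the commutator subgroup $\mathrm{As}(X)'$ and the stabilizer. Picking the base point $e\in G$ (the identity), its stabilizer in $\mathrm{As}(X)=G\rtimes_\varphi\Z$ consists of those $(g,k)$ with $e\cdot(g,k)=e$; unwinding the action one sees $e\cdot(g,1) = \varphi(g^{-1})$, so the stabilizer of $e$ in the whole group is exactly $\{(g,k) : k\in\Z,\ g = \varphi^{k}(\cdots)\}$ — more usefully, the orbit map $\mathrm{As}(X)\to X$, $(g,k)\mapsto e\cdot(g,k)$, is surjective (connectedness of $X$, which we know) with the stabilizer of $e$ being a specific subgroup $\mathrm{St}$. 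Meanwhile $\mathrm{As}(X)' = G\rtimes_\varphi\Z$ projected to its abelianization: abelianizing $G\rtimes_\varphi\Z$ kills all commutators $[\,(g,0),(e,1)\,] = (g^{-1}\varphi(g),0)$, so $\mathrm{As}(X)^{\mathrm{ab}} = G^{\mathrm{ab}}/(\varphi-1) \oplus \Z$, and hence $\mathrm{As}(X)' = [G,G]\cdot (\varphi-1)G \rtimes \{0\}$, i.e.\ the subgroup of $(h,0)$ with $h$ in the subgroup of $G$ generated by commutators and by $g^{-1}\varphi(g)$. Then $\pi_1(X,e) = \mathrm{As}(X)'\cap \mathrm{St}$, and I must show this is trivial: an element $(h,0)$ with $h$ of the above form fixing $e$ means $\varphi(h^{-1}) = e$, i.e.\ $h=e$.

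The crux — and the step I expect to be the main obstacle — is the case analysis that is presumably invoked here: one needs to know enough about $\pi_1(M^n_K)$ and the monodromy $\varphi$ to carry through the "$\mathrm{As}(X)'\cap\mathrm{St}$ is trivial" computation, and this may genuinely require splitting into $K$ trivial (where $M^n_K=S^3$, $\pi_1=1$, and everything is immediate), $K$ a nontrivial torus knot (where $\tau^nK$ is a particularly nice fibered $2$-knot and $M^n_K$ is a Seifert fibered/spherical space form for small $n$), and the general hyperbolic or satellite case where one appeals to residual finiteness or to structural results about the monodromy. Alternatively — and this is probably the cleaner route, so I would try it first — I would avoid the direct group computation entirely and instead quote \cite[Proposition~5.15 / Theorem~5.3]{Eisermann2014quandle}: it suffices to exhibit \emph{some} pointed quandle covering out of $(Q(\tau^nK),q)$ that is universal, or equivalently to show $Q(\tau^nK)$ admits no nontrivial connected covering. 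Since $Q(\tau^nK)=\mathrm{GAlex}(G,\varphi)$ and coverings of generalized Alexander quandles are controlled by central extensions of $G$ compatible with $\varphi$ together with the type condition, one reduces to: every such covering quandle, being connected with the same associated group quotient, forces the covering group $\Lambda$ to be trivial. Here Proposition~\ref{prop:type_covering} and Corollary~\ref{cor:type_covering} enter to rule out type obstructions, and the finiteness/structure of $\pi_1(M^n_K)$ (which is why the theorem is stated for $n>1$, where $M^n_K$ is a genuine closed $3$-manifold) is what closes the argument. The hardest case will again be pinning down that the relevant $H_2^{\mathrm{grp}}$-type obstruction vanishes for all $K$, which is where a knot-by-knot geometric input — the structure of cyclic branched covers — seems unavoidable.
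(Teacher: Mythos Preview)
Your proposal has a genuine gap in both routes.

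\textbf{Approach 1 (direct computation).} The asserted isomorphism $\mathrm{As}(X)\cong G\rtimes_\varphi\Z$ is not established, and in fact the argument you sketch for it cannot close: surjectivity of the natural map is easy, but injectivity is exactly the subtle point. Worse, your stabilizer claim is incorrect. Tracking the right action carefully, an element of the ``degree zero'' part acts on the base point $e\in G$ by $e\mapsto e\cdot(e_x e_y^{-1}) = x^{-1}\varphi^{-1}(xy^{-1})y$, so $(h,0)$ fixes $e$ precisely when the corresponding element of $G$ lies in $\mathrm{Fix}(\varphi)$, \emph{not} when $h=e$. Since $l_K\in\mathrm{Fix}(\varphi)$ and $l_K$ is typically nontrivial in $\pi_1(M^n_K)$, your computation would give $\pi_1(X,e)\supseteq A^n_K\neq 0$ whenever the natural map to $G\rtimes_\varphi\Z$ is injective --- contradicting the theorem. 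The resolution is that $\mathrm{As}(X)\to G\rtimes_\varphi\Z$ has nontrivial kernel in general; knowing the structure of $\mathrm{As}(X)$ is essentially equivalent to what you are trying to prove (compare Proposition~\ref{prop:induce}, which the paper derives \emph{from} simple connectedness, not conversely).

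\textbf{Approach 2 (and the anticipated case analysis).} You are right to invoke Proposition~\ref{prop:simply_connect_equivalent}, but you then head in the wrong direction: no $H_2^{\mathrm{grp}}$ obstruction, no case split on $K$, and no geometric input about branched covers is needed. The paper's proof is completely uniform. It takes the explicit quandle presentation of $Q(\tau^nK)$ coming from a long-knot diagram of $K$ --- generators $a_0,\dots,a_m$, Wirtinger-type relations $a_{i-1}\ast^{\varepsilon_i}a_{\kappa_i}=a_i$, plus the extra relations $a_i\ast^n a_0=a_i$ --- and, given coverings $f:(Q(\tau^nK),a_0)\to(X,x)$ and $p:(\tilde X,\tilde x)\to(X,x)$, builds the lift $\tilde f$ by hand: set $\tilde f(a_0)=\tilde x$ and propagate along the arcs via $\tilde f(a_i):=\tilde f(a_{i-1})\ast^{\varepsilon_i} f(a_{\kappa_i})$, using that $p$ is a covering. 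The only thing left is to check the relations $a_i\ast^n a_0=a_i$ lift, and this is exactly where Proposition~\ref{prop:type_covering} is used: since $\mathrm{type}(Q(\tau^nK))=n$ (from \cite{TanakaTaniguchi}), one gets $\mathrm{type}(\tilde X)\mid n$, so those relations hold automatically in $\tilde X$. That is the whole argument --- the key idea you are missing is that the long-knot presentation lets you define the lift inductively without ever touching $\mathrm{As}(X)$ or the geometry of $M^n_K$.
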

\begin{proof}
Let $D$ be a diagram of the long knot obtained by cutting $K$ open. 
Let us recall the presentation of $Q(\tau^n K)$ obtained from $D$; see \cite{Satoh2002surface}. 
%%%%%%%%%%%%%%%%%%%%%%%
%\begin{figure}[h]
%  	\center
%  	\includegraphics[scale=.3]{long_knot_diagram.pdf}
%	\caption{A diagram of the long knot corresponding to $K$}
%  	\label{long_knot_diagram}
%\end{figure}
%%%%%%%%%%%%%%%%%%%%%%
Traveling along $D$ from one end point to the other end point according to the orientation of $D$, 
we label the arcs of $D$ consecutively from $a_0$ to $a_m$, where $m$ is the number of the crossings of $D$. 
For each $i$, let $\chi_i$ be the crossing of $D$ dividing $a_{i-1}$ and $a_{i}$, and $\varepsilon_i$ the sign of the crossing $\chi_i$. We denote by $\kappa_i$ the subscript of the label of the over arc at the crossing $\chi_i$, that is, the label of that over arc is $a_{\kappa_i}$ for each $i$. We define the relator $r_i$ by $a_{i-1}\ast^{\varepsilon_i}a_{\kappa_i}=a_{i}$. Then the knot quandle $Q(\tau^n K)$ has a following presentation:
\[
\langle a_0,a_1,\ldots,a_{m}\mid r_1,\ldots,r_m, a_i\ast^n a_0=a_i\ (i=1,\ldots,m)\rangle.
\]

By Proposition~\ref{prop:simply_connect_equivalent}, in order to prove that $Q(\tau^n K)$ is simply connected, it is sufficient to show that any covering from $(Q(\tau^nK),a_0)$ to any pointed quandle $(X,x)$ is a universal covering of $(X,x)$. 
Let $f:(Q(\tau^nK),a_0)\to (X,x)$ and $p:(\tilde{X},\tilde{x})\to (X,x)$ be two coverings. 
It follows from the relator $r_i$ of the presentation of $Q(\tau^nK)$ that we have $f(a_{i-1}) *^{\varepsilon_i} f(a_{\kappa_i}) = f(a_i)$ for all $i$. 
%for any two coverings $f:(Q(\tau^nK),a_0)\to (X,x)$ and $p:(\tilde{X},\tilde{x})\to (X,x)$, there exists a unique quandle homomorphism $\tilde{f}:(Q(\tau^nK),a_0)\to(\tilde{X},\tilde{x})$ such that $f=p\circ\tilde{f}$. 
We define a map $\widetilde{f}:\{ a_0,a_1,\ldots,a_{m}\}\to \tilde{X}$ as follows.  First, we set $\widetilde{f}(a_0):=\tilde{x}$. Since $p$ is a covering, we can set $\widetilde{f}(a_{i}):=\widetilde{f}(a_{i-1})\ast^{\varepsilon_i} f(a_{\kappa_i})$ at each crossing $\chi_i$ inductively; recall the first paragraph of Subsection~\ref{subsect:def_covering_and_simply_connect}. Since we have 
$p ( \widetilde{f}(a_{i-1})\ast^{\varepsilon_i} f(a_{\kappa_i}) ) 
= p((\widetilde{f}(a_{i-1})) \ast^{\varepsilon_i} f(a_{\kappa_i}),$
%= f(a_{i-1}) \ast^{\varepsilon_i} f(a_{\kappa_i}), $$
it holds that $p(\widetilde{f}(a_i))=f(a_i)$ for all $i$ by induction. 
Moreover, it follows from $p(\widetilde{f}(a_i))=f(a_i)$ 
%and $\widetilde{f}(a_{i}) = \widetilde{f}(a_{i-1})\ast^{\varepsilon_i} f(a_{\kappa_i})$ 
that we have $\widetilde{f}(a_{i})=\widetilde{f}(a_{i-1})\ast^{\varepsilon_i}\widetilde{f}(a_{\kappa_i})$ for all $i$.

Since the type of $Q(\tau^n K)$ is equal to $n$ by \cite[Theorem~3.3]{TanakaTaniguchi}, we see that ${\rm type}(X) \mid n$ by Lemma~\ref{lemm:type_covering} for the covering $f$. Since $f$ is surjective and the source $Q(\tau^n K)$ of $f$ is connected, the target $X$ of $f$ is also connected. Then, by Proposition~\ref{prop:type_covering} for the covering $p$, it holds that ${\rm type}(\tilde{X})={\rm type}(X)$ and hence we have ${\rm type}(\tilde{X}) \mid n$. 
%Since $f:Q(\tau^n K)\to X$ is a surjective quandle homomorphism and the type of $Q(\tau^n K)$ is equal to $n$ by \cite[Theorem~3.3]{TanakaTaniguchi}, we see that $X$ is connected and ${\rm type}(X)|n$. By Proposition~\ref{prop:type_covering} (2), it holds that ${\rm type}(\tilde{X})={\rm type}(X)$. Thus, we see that ${\rm type}(\tilde{X})|n$. 
Thus, it holds that $\widetilde{f}(a_i)\ast^n\widetilde{f}(a_0)=\widetilde{f}(a_i)$ for all $i$. 
%$i=1,\ldots,m$. 
This implies that the map $\widetilde{f}:\{ a_0,a_1,\ldots,a_{m}\}\to \tilde{X}$ can be uniquely extended to the quandle homomorphism $\widetilde{f}:(Q(\tau^n K),a_0)\to (\tilde{X},\tilde{x})$ such that $p\circ\widetilde{f}=f$.
\end{proof}

%As we mentioned above, if a quandle $\tilde{X}$ is simply connected, any covering $p:(\tilde{X},\tilde{x})\to (X,x)$ is a universal covering of $(X,x)$. Hence, by Theorem~\ref{theo:central_extension} and Theorem~\ref{theo:simply_connect_twist_spun}, we have the following:
%
%Since the covering $p \colon Q(\tau^nK)\to Q_n(K)$ assosiated with the central extension $A^n_K \curvearrowright Q(\tau^n K) \to Q_n(K)$ is the projection $p:{\rm GAlex}(\pi_1(M^n_K),\varphi)\to A^n_K\backslash \pi_1(M^n_K)$ by Theorem~\ref{theo:central_extension}, it follows from Proposition~\ref{prop:simply_connect_equivalent} and Theorem~\ref{theo:simply_connect_twist_spun} that $p$ is a universal covering of  $(A^n_K\backslash\pi_1(M^n_K),A^n_Kg)$ for any $A^n_Kg\in A^n_K\backslash\pi_1(M^n_K)$. Hence we have the following: 
%
Using Proposition~\ref{prop:simply_connect_equivalent} and Theorem~\ref{theo:simply_connect_twist_spun} for the covering associated with the central extention in Theorem~\ref{theo:central_extension}, we have the following: 

\begin{theo}
\label{theo:universal_covering}
The covering $p \colon Q(\tau^nK)\to Q_n(K)$ associated with the central extension 
$A^n_K \curvearrowright Q(\tau^n K) \to Q_n(K)$ is  a universal covering of $(Q_n(K),x)$ for any $x$ in $Q_n(K)$ for an oriented $1$-knot $K$ and an integer $n>1$. 
%The covering $p:{\rm GAlex}(\pi_1(M^n_K),\varphi)\to A^n_K\backslash \pi_1(M^n_K)$ which is defined in Section~\ref{sect:central_extension} is the universal covering of $(A^n_K\backslash\pi_1(M^n_K),A^n_Kg)$ for any $A^n_Kg\in A^n_K\backslash\pi_1(M^n_K)$. 
\end{theo}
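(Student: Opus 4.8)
The plan is to deduce this theorem directly by combining the two structural results already established: the existence of the central extension in Theorem~\ref{theo:central_extension} and the simply connectedness of the total space in Theorem~\ref{theo:simply_connect_twist_spun}. First I would recall that, by Theorem~\ref{theo:central_extension}, the surjective quandle homomorphism $p \colon Q(\tau^n K) \to Q_n(K)$ coming from the extension $A^n_K \curvearrowright Q(\tau^n K) \to Q_n(K)$ is in particular a covering, as observed in the discussion following the definition of covering in Subsection~\ref{subsect:def_covering_and_simply_connect} (every extension gives rise to a covering). So $p$ is a genuine quandle covering, and both $Q(\tau^n K)$ and $Q_n(K)$ are connected quandles.

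Next I would invoke Theorem~\ref{theo:simply_connect_twist_spun}, which says $Q(\tau^n K)$ is simply connected. Then Proposition~\ref{prop:simply_connect_equivalent} applies: since $Q(\tau^n K)$ is simply connected, there is an element $q \in Q(\tau^n K)$ such that any covering from $(Q(\tau^n K), q)$ to any pointed quandle $(X,x)$ is a universal covering of $(X,x)$. In fact, because $Q(\tau^n K)$ is connected, the choice of base point is immaterial for this property — one can transport the distinguished $q$ to any other element via the transitive action of the associated group, and the property of being a universal covering is preserved. Applying this with the pointed quandle $(Q_n(K), x)$ for an arbitrary $x \in Q_n(K)$ (choosing $q \in p^{-1}(x)$) shows that $p \colon (Q(\tau^n K), q) \to (Q_n(K), x)$ is a universal covering of $(Q_n(K), x)$, which is exactly the assertion.

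The only genuine subtlety — and hence the step I would write out most carefully — is the base-point bookkeeping. Proposition~\ref{prop:simply_connect_equivalent} only guarantees the universal-covering property for \emph{some} base point $q$ of $Q(\tau^n K)$, whereas the theorem asserts it for \emph{any} $x$ in $Q_n(K)$. To bridge this gap I would argue that if $p \colon (Q,q) \to (X,x)$ is a universal covering and $q'$ is another element of the connected quandle $Q$, then $p \colon (Q, q') \to (X, p(q'))$ is again a universal covering: given any covering $\pi \colon (\tilde{X}, \tilde{x}) \to (X, p(q'))$, one first notes the unique lift over the old base point exists by hypothesis and then checks that connectedness forces the lift to respect the new base points as well, the uniqueness again following from connectedness of $Q$ (two quandle homomorphisms from a connected quandle agreeing at one point agree everywhere, by \cite[Corollary~4.11]{Eisermann2014quandle}). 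Since $p$ is surjective and $Q(\tau^n K)$ connected, every $x \in Q_n(K)$ arises as $p(q')$ for a suitable $q'$, completing the argument. Everything else is a direct citation of the preceding theorems, so no further computation is needed.
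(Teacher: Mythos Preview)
Your proposal is correct and follows exactly the same approach as the paper: the paper's proof consists of the single sentence that the theorem follows by applying Proposition~\ref{prop:simply_connect_equivalent} and Theorem~\ref{theo:simply_connect_twist_spun} to the covering associated with the central extension of Theorem~\ref{theo:central_extension}. Your additional care about the base-point bookkeeping is a welcome elaboration that the paper leaves implicit.
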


%\begin{proof}
%Eisermann showed \cite{Eisermann2014quandle} that a connected quandle $Q$ is simply connected if and only if every pointed covering $p:(Q,q)\to(\bar{Q},\bar{q})$ is a universal covering. By Theorem~\ref{theo:simply_connect_twist_spun}, the knot quandle $Q(\tau^n K)$ is simply connected. Hence, the covering $p:{\rm GAlex}(\pi_1(M^n_K),\varphi)\to A^n_K\backslash \pi_1(M^n_K)$ is a universal covering. 
%\end{proof}

%%%%%%%%%%%%%%%%%%%%%%%%%%%%%%%%%%%%%%%%%%%%%%%%%%%%%%%%
\subsection{Induced group homomorphisms}
%Finally, 
We observe the induced group homomorphism $p_{\#}:{\rm As}(Q(\tau^n K))\to {\rm As}(Q_n(K))$ in 
Proposition~\ref{prop:induce}, which will not be used in this paper but has its own interest. 
%We note that ${\rm As}(Q(\tau^n K))$ is group isomorphic to ${\rm As}(Q_n(K))$. %are group isomorphic.
Although it follows from \cite[Example~1.25]{Eisermann2014quandle} that there exists a universal covering $f:(\tilde{X},\tilde{x})\to (X,x)$ 
such that 
$\mathrm{As}(\tilde{X})$ is group isomorphic to $\mathrm{As}(X)$
but 
the induced group homomorphism $f_{\#}:{\rm As}(\tilde{X})\to {\rm As}(X)$ is not a group isomorphism, 
we have the following for the universal covering $p \colon Q(\tau^nK)\to Q_n(K)$. 
%(cf. \cite[Warning~5.11]{Eisermann2014quandle}).
%if the pointed covering $f:(\tilde{X},\tilde{x})\to (X,x)$ is a universal covering, the induced group homomorphism $f_{\#}:{\rm As}(\tilde{X})\to {\rm As}(X)$ is not group isomorphic (cf. \cite[Warning~5.11]{Eisermann2014quandle}).

\begin{prop}\label{prop:induce}
%Let $p:Q(\tau^n K)\to Q_n(K)$ be the covering constructed in Section~\ref{sect:central_extension}. 
%Then 
The map $p_{\#}:{\rm As}(Q(\tau^n K))\to {\rm As}(Q_n(K))$ induced from the universal covering $p \colon Q(\tau^nK)\to Q_n(K)$ is a group isomorphism.
\end{prop}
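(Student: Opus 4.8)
The plan is to identify both ${\rm As}(Q(\tau^n K))$ and ${\rm As}(Q_n(K))$ with one and the same finitely generated quotient $\Gamma$ of the knot group $G(K)$, to observe that $\Gamma$ is Hopfian, and then to invoke the standard fact that a surjective homomorphism between two groups each isomorphic to a common Hopfian group is an isomorphism. First I would read a group presentation of ${\rm As}(Q(\tau^n K))$ off the quandle presentation of $Q(\tau^n K)$ recalled in the proof of Theorem~\ref{theo:simply_connect_twist_spun}: since ${\rm As}$ sends a presented quandle to the group with the same generators and relators, ${\rm As}(Q(\tau^n K))$ is generated by $a_0,\dots,a_m$ subject to $a_i=a_{\kappa_i}^{-\varepsilon_i}a_{i-1}a_{\kappa_i}^{\varepsilon_i}$ and $[a_0^{\,n},a_i]=1$ for $i=1,\dots,m$. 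The first family of relators is a Wirtinger presentation of $G(K)$ coming from the long diagram $D$, in which $a_0$ is a meridian; as the $a_i$ generate, the second family exactly forces $a_0^{\,n}$ to be central. Hence ${\rm As}(Q(\tau^n K))\cong\Gamma:=G(K)/\langle\langle\,[m_K^{\,n},g]:g\in G(K)\,\rangle\rangle$.

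Next I would compute ${\rm As}(Q_n(K))$. Since $Q_n(K)=Q(K)/\!\sim_n$ and ${\rm As}$ carries a quandle quotient to the corresponding group quotient, while ${\rm As}(Q(K))=G(K)$ and every element of the connected quandle $Q(K)$ maps into the conjugacy class of $m_K$, the defining relation $x\sim x\ast^n y$ of $\sim_n$ becomes the requirement that the $n$-th power of each meridian be central, which is equivalent to $m_K^{\,n}$ being central (a conjugate of a central element being that element). Therefore ${\rm As}(Q_n(K))\cong\Gamma$ as well, and under these identifications $p_\#$ is a surjective homomorphism $\Gamma\to\Gamma$, the surjectivity being automatic from that of $p$.

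It then remains to check that $\Gamma$ is Hopfian. The group $\Gamma$ is finitely generated, being a quotient of $G(K)$, and it is the knot group of the $n$-twist spin: the group obtained from a Wirtinger presentation of $G(K)$ by making $m_K^{\,n}$ central is, by Zeeman's description of $\tau^n K$ \cite{Zeeman1965twisting}, isomorphic to $\pi_1(M^n_K)\rtimes_\varphi\Z$. As $\varphi$ is the monodromy of the $n$-fold cyclic branched covering, $\varphi^n={\rm id}$, so $\Gamma$ contains the subgroup $\pi_1(M^n_K)\times\Z$ of index $n$; fundamental groups of compact $3$-manifolds are residually finite, so $\pi_1(M^n_K)\times\Z$ is residually finite, hence so is its finite-index overgroup $\Gamma$, and therefore $\Gamma$ is Hopfian. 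Composing $p_\#$ with the isomorphisms ${\rm As}(Q_n(K))\cong\Gamma\cong{\rm As}(Q(\tau^n K))$ produces a surjective endomorphism of $\Gamma$, which is thus an isomorphism; hence $p_\#$ is an isomorphism.

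I expect the crux to be pinning down ${\rm As}(Q_n(K))$: one must verify that passing to the quotient quandle $Q(K)/\!\sim_n$ imposes \emph{exactly} the centrality of $m_K^{\,n}$ and nothing more, and then recognise $\Gamma$ as $\pi_1(M^n_K)\rtimes_\varphi\Z$ in order to feed in the (deep) residual finiteness of $3$-manifold groups. That some such extra input is needed is not accidental: by \cite[Example~1.25]{Eisermann2014quandle} the source of a universal covering can fail to be Hopfian, so the statement is false for universal coverings in general.
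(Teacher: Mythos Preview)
Your argument is correct, but it follows a genuinely different route from the paper's.

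The paper proves Proposition~\ref{prop:induce} by invoking Proposition~\ref{prop:simply_connect_associated_group}: for a universal covering $p:(\tilde X,\tilde x)\to(X,x)$ of a connected quandle, $p_\#$ is a group isomorphism if and only if $\tilde X$ is simply connected. Since $Q(\tau^n K)$ was already shown to be simply connected in Theorem~\ref{theo:simply_connect_twist_spun}, the result is immediate. This stays entirely within Eisermann's quandle covering framework and uses no $3$-manifold theory beyond what was needed for Theorem~\ref{theo:simply_connect_twist_spun}.

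Your approach instead computes both associated groups explicitly as $\Gamma=G(K)/\langle\langle[m_K^{\,n},g]\rangle\rangle\cong\pi_1(M^n_K)\rtimes_\varphi\Z$, and then exploits that $\Gamma$ is Hopfian (via residual finiteness of $3$-manifold groups, hence of the index-$n$ subgroup $\pi_1(M^n_K)\times\Z$). The explicit identification of ${\rm As}(Q_n(K))$ is a useful by-product not stated in the paper. The trade-off is that you import a deep consequence of geometrization, whereas the paper's route needs only the internal result Theorem~\ref{theo:simply_connect_twist_spun}. Your closing remark that ``some such extra input is needed'' is accurate for the Hopfian strategy, but the paper shows an alternative extra input---simple connectedness of the cover---already suffices and was at hand; so the necessity of residual finiteness is particular to your method, not to the proposition itself.
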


In order to prove Proposition~\ref{prop:induce}, we need the following:  

\begin{prop}
\label{prop:simply_connect_associated_group}
Let $p:(\tilde{X},\tilde{x})\to (X,x)$ be a covering between pointed quandles. 
If $X$ is connected and $p$ is a universal covering, 
%a universal covering of $(X,x)$, 
then the following are equivalent$:$ 
\begin{itemize}
\item The quandle $\tilde{X}$ is simply connected.
\item The induced map $p_{\#}:{\rm As}(\tilde{X})\to {\rm As}(X)$ is a group isomorphism.
\end{itemize}  
\end{prop}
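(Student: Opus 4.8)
The plan is to prove both implications of Proposition~\ref{prop:simply_connect_associated_group} using the structural results on quandle coverings from \cite{Eisermann2014quandle} together with the fundamental group description already recalled in Subsection~\ref{subsect:def_covering_and_simply_connect}. The key point is that, for a connected quandle $X$, the fundamental group $\pi_1(X,x)=\{g\in\mathrm{As}(X)'\mid x\cdot g=x\}$ sits inside $\mathrm{As}(X)'$, and the universal covering $p\colon(\tilde X,\tilde x)\to(X,x)$ realizes the quotient in the sense that the deck group of $p$ is isomorphic to $\pi_1(X,x)$ acting freely on each fiber; this is the content of the covering-theoretic results of \cite{Eisermann2014quandle} already invoked in the proof of Proposition~\ref{prop:type_covering}. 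First I would record that $\tilde X$ is connected (by \cite[Lemma~5.2 and Theorem~5.3]{Eisermann2014quandle}, as used above), so that $\pi_1(\tilde X,\tilde x)$ is well-defined and independent of base point, and that $p$ being a covering gives a short exact sequence relating $\pi_1(\tilde X,\tilde x)$, $\pi_1(X,x)$, and the deck transformation group of $p$.

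For the forward implication, suppose $\tilde X$ is simply connected. Then $\pi_1(\tilde X,\tilde x)$ is trivial, so in the exact sequence the deck group of $p$ is isomorphic to $\pi_1(X,x)$. On the level of associated groups, the induced map $p_{\#}\colon\mathrm{As}(\tilde X)\to\mathrm{As}(X)$ is surjective (since $p$ is surjective and $X$ is generated as a quandle by the image of $\tilde X$, so $\mathrm{As}(X)$ is generated by $p_{\#}(\mathrm{As}(\tilde X))$), and its kernel is generated by relations that hold in $\mathrm{As}(\tilde X)$ but are collapsed in $\mathrm{As}(X)$; the covering condition forces these to be exactly the elements corresponding to the trivial deck action, hence $\ker p_{\#}$ is trivial precisely when $\pi_1(\tilde X,\tilde x)=1$. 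I would make this precise by invoking the explicit identification in \cite{Eisermann2014quandle} of $\mathrm{As}$ of a quandle extension/covering with the corresponding group extension, so that $p_{\#}$ fits into a commuting ladder whose kernel on associated groups is $\pi_1(\tilde X,\tilde x)$.

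For the converse, suppose $p_{\#}$ is an isomorphism. Then in particular it is injective, so by the same identification the obstruction group $\pi_1(\tilde X,\tilde x)$ embedded in $\ker p_{\#}$ must be trivial, which together with connectedness of $\tilde X$ says exactly that $\tilde X$ is simply connected. Here I would be careful that the hypothesis ``$p$ is a universal covering'' is genuinely used: it guarantees that the deck group of $p$ over the connected base $X$ is the full $\pi_1(X,x)$ and that $\tilde X$ itself is the universal covering of $X$ (not merely some connected covering), so that $\pi_1(\tilde X,\tilde x)$ injecting into $\ker p_{\#}$ is an equivalence rather than just an inequality — this is what rules out the phenomenon in \cite[Example~1.25]{Eisermann2014quandle} cited just before Proposition~\ref{prop:induce}.

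The main obstacle I anticipate is pinning down the precise relationship between $\ker p_{\#}$ and $\pi_1(\tilde X,\tilde x)$: the na\"ive expectation ``$\ker p_{\#}\cong\pi_1$'' is false in general (again by \cite[Example~1.25]{Eisermann2014quandle}), and it is only the combination of $p$ being a \emph{universal} covering and $X$ being connected that makes the two conditions equivalent. So the careful step is to extract from \cite{Eisermann2014quandle} the correct statement — presumably a functorial exact sequence $1\to\pi_1(\tilde X,\tilde x)\to\ker p_{\#}\to(\text{something that vanishes for universal coverings})\to 1$, or a direct argument that for a universal covering $\ker p_{\#}$ is generated by (the images of) elements of $\pi_1(\tilde X,\tilde x)$ — and to check that the ambiguity in base points flagged in Proposition~\ref{prop:simply_connect_equivalent} does not interfere. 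Once that exact sequence is in hand, both implications are immediate.
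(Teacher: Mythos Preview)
Your overall strategy is on the right track: the proof does hinge on identifying $\ker p_{\#}$ with $\pi_1(\tilde X,\tilde x)$ under the hypotheses, and then both implications are immediate. However, you have not located the precise mechanism, and your speculated exact sequence $1\to\pi_1(\tilde X,\tilde x)\to\ker p_{\#}\to(\text{something})\to 1$ is not the right shape. The paper's proof is much shorter because it invokes two ready-made results from \cite{Eisermann2014quandle}:
\begin{itemize}
\item \cite[Proposition~5.10]{Eisermann2014quandle}: for \emph{any} covering $p$, one has $\ker\big(p_{\#}\colon\mathrm{As}(\tilde X)\to\mathrm{As}(X)\big)=\ker\big(p_{\ast}\colon\pi_1(\tilde X,\tilde x)\to\pi_1(X,x)\big)$.
\item \cite[Theorem~5.22 and Proposition~5.23]{Eisermann2014quandle}: when $X$ is connected and $p$ is a universal covering, the induced map $p_{\ast}$ is the zero map.
\end{itemize}
Together these give $\ker p_{\#}=\ker p_{\ast}=\pi_1(\tilde X,\tilde x)$ directly, and since $p_{\#}$ is automatically surjective (as you note), the equivalence follows in one line. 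So rather than building an exact sequence with an unidentified cokernel, the universality hypothesis is used to kill $p_{\ast}$ entirely, which collapses $\ker p_{\ast}$ to the whole of $\pi_1(\tilde X,\tilde x)$. Your discussion of deck groups and the caution about \cite[Example~1.25]{Eisermann2014quandle} is correct in spirit but unnecessary once these two citations are in hand.
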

\begin{proof}
We remark that if a quandle homomorphism $f:X\to Y$ is surjective, the induced group homomorphism $f_{\#}:{\rm As}(X)\to{\rm As}(Y)$ is also surjective. It is shown in  \cite[Proposition~5.10]{Eisermann2014quandle} that each covering $p:(\tilde{X},\tilde{x})\to (X,x)$ induces the group homomorphism $p_{\ast}:\pi_1(\tilde{X},\tilde{x})\to \pi_1(X,x)$ and that ${\rm Ker}(p_{\#}:{\rm As}(\tilde{X})\to{\rm As}(X))$ is equal to ${\rm Ker}(p_{\ast})$.

By \cite[Theorem~5.22 and Proposition~5.23]{Eisermann2014quandle}, 
we see that the induced group homomorphism $p_{\ast}:\pi_1(\tilde{X},\tilde{x})\to \pi_1(X,x)$ is the $0$-map, since $X$ is connected and $p$ is universal. 
%we see that if a connected covering $p:(\tilde{X},\tilde{x})\to (X,x)$ is a universal covering, the induced group homomorphism $p_{\ast}:\pi_1(\tilde{X},\tilde{x})\to \pi_1(X,x)$ is the 0-map. 
This implies that ${\rm Ker}(p_{\ast})$ is equal to $\pi_1(\tilde{X},\tilde{x})$. 
Hence, 
%it holds that 
the quandle $\tilde{X}$ is simply connected if and only if $p_{\#}$ is a group isomorphism.
\end{proof}

\begin{proof}[Proof of Proposition~\ref{prop:induce}]
By Theorem~\ref{theo:universal_covering}, the covering $p:Q(\tau^n K) \to Q_n(K)$ is a universal covering. Since the knot quandle $Q(\tau^n K)$ is simply connected, it holds that the induced group homomorphism $p_{\#}:{\rm As}(Q(\tau^n K))\to {\rm As}(Q_n(K))$ is a group isomorphism by Proposition~\ref{prop:simply_connect_associated_group}. 
\end{proof}

%%%%%%%%%%%%%%%%%%%%%%%%%%%%%%%%%%%%%%%%%%%%%%%%%%%%%%%%
\section{The second quandle homology group of an $n$-quandle}
\label{sect:quandle_homology}

%The aim of this section is to 
We compute the second quandle homology group $H^Q_2(Q_n(K))$ of the knot $n$-quandle $Q_n(K)$ for each oriented $1$-knot $K$ and each integer $n>1$. %greater than $1$. 
Instead of computing according to the original definition in \cite{Carter2003quandle}, we compute it by using 
the central extension $A^n_K \curvearrowright Q(\tau^n K) \to Q_n(K)$,  
%in Theorem~\ref{theo:central_extension},  %where the covering $Q(\tau^n K) \to Q_n(K)$ assoticated with the extension is universal in Theorem~\ref{theo:simply_connect_twist_spun}.  
%
%Let $n$ be an integer greater than $1$ and $K$ an oriented 1-knot. 
where $A^n_K$ is the cyclic subgroup of 
%the fundamental group 
$\pi_1(M^n_K)$ 
%of  the $n$-fold cyclic branched covering $M^n_K$ of $S^3$ branched over $K$ 
generated by the preferred longitude $l_K$ of $K$ as in Section~\ref{sect:central_extension}. 
%We denote by $A^n_K$ the cyclic subgroup of $\pi_1(M^n_K)$ generated by the prefered longitude $l_K$ as in Section~\ref{sect:central_extension}. 
%
We recall here %the following proposition due to Eisermann 
\cite[Proposition~5.9]{Eisermann2014quandle}, which connects an extension of $X$ with the fundamental group of $X$ for a connected quandle $X$: 
%First, we recall the following proposition due to Eisermann \cite{Eisermann2014quandle}: 
%{\cite[Proposition~5.9]{Eisermann2014quandle}}: 
\begin{prop}\label{prop:Eisermann_ext-and-pi1}
%[\cite{Eisermann2014quandle}]
%[{\cite[Proposition~5.9]{Eisermann2014quandle}}]
For a connected quandle $X$, 
%Let $X$ be a connected quandle.
%, $\tilde{X}$ be a quandle and $\Lambda$ be a group. 
if a quandle $\tilde{X}$ is an extension of $X$ by a group $\Lambda$, then the associated covering $p:\tilde{X}\to X$ induces a natural group homomorphism $h:\pi_1(X,x)\to \Lambda$ for any $x\in X$. In particular, $h$ is a group isomorphism if and only if $p$ is a universal covering. 
\end{prop}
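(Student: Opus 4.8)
The plan is to build the homomorphism $h$ by hand from the covering $p$ together with the left $\Lambda$-action and the right $\mathrm{As}(X)$-action, and then to deduce the equivalence ``$h$ is an isomorphism $\iff$ $p$ is a universal covering'' from Eisermann's covering theory for connected quandles; indeed the statement is essentially \cite[Proposition~5.9]{Eisermann2014quandle}. First I would record the available structure: since $p\colon\tilde X\to X$ is the covering associated with the extension, the quandle operation on $\tilde X$ descends to a right action of $\mathrm{As}(X)$ on $\tilde X$ by $\alpha\cdot y:=\alpha\ast\tilde y$ for any $\tilde y\in p^{-1}(y)$ --- this is well defined by the covering condition, is compatible with the defining relations of $\mathrm{As}(X)$ by self-distributivity in $\tilde X$, and satisfies $p(\alpha\cdot g)=p(\alpha)\cdot g$ for all $g\in\mathrm{As}(X)$. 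Condition $(\textrm{E1})$ then says exactly that the left $\Lambda$-action commutes with this right $\mathrm{As}(X)$-action and that each $\lambda\in\Lambda$ acts on $\tilde X$ as a quandle automorphism covering $\mathrm{id}_X$, that is, as a deck transformation of $p$, while $(\textrm{E2})$ says that $\Lambda$ acts freely and transitively on every fibre of $p$.

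Next I would define $h$. Fixing a lift $\tilde x\in p^{-1}(x)$, for $g\in\pi_1(X,x)\subseteq\mathrm{As}(X)$ one has $x\cdot g=x$, hence $\tilde x\cdot g\in p^{-1}(x)$, so by $(\textrm{E2})$ there is a unique element of $\Lambda$, which I call $h(g)$, with $\tilde x\cdot g=h(g)\cdot\tilde x$. That $h$ is a group homomorphism follows from the computation $\tilde x\cdot(g_1g_2)=(h(g_1)\cdot\tilde x)\cdot g_2=h(g_1)\cdot(\tilde x\cdot g_2)=(h(g_1)h(g_2))\cdot\tilde x$, whose middle equality is precisely where $(\textrm{E1})$ enters; independence of the auxiliary choice of $\tilde x$ (up to conjugation in $\Lambda$) and naturality in $x$ for connected $X$ are routine.

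For the equivalence I would argue through deck transformations together with Eisermann's classification of connected coverings. If $p$ is a universal covering then $\tilde X$ is connected and simply connected; a deck transformation of a connected covering is determined by its value at one point, so $\Lambda$, acting freely and transitively on a fibre, is the whole deck group of $p$, which for the universal covering is canonically isomorphic to $\pi_1(X,x)$ by \cite[Theorem~5.22 and Proposition~5.23]{Eisermann2014quandle}, and unwinding the definitions this isomorphism is precisely $h$. Conversely, if $h$ is an isomorphism, surjectivity of $h$ forces $p^{-1}(x)$ --- and then, using surjectivity of $p$, connectedness of $X$, and that each $\lambda\in\Lambda$ is a quandle automorphism of $\tilde X$ fixing the connected component of $\tilde x$, all of $\tilde X$ --- to lie in a single connected component, so $\tilde X$ is connected; injectivity of $h$ then forces $\pi_1(\tilde X,\tilde x)$ to be trivial, via its identification with $\ker h$ through the map induced by $p$ (cf.\ \cite[Theorem~5.22 and Proposition~5.23]{Eisermann2014quandle}), whence $\tilde X$ is simply connected and $p$ is a universal covering by Proposition~\ref{prop:simply_connect_equivalent}.

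The step I expect to be the main obstacle is this last equivalence: making the deck-transformation picture rigorous in the quandle setting, verifying that the ad hoc $h$ really coincides with the canonical isomorphism $\pi_1(X,x)\cong\mathrm{Deck}(p)$ of covering theory, tracking base points carefully, and ruling out a priori disconnectedness of $\tilde X$ in the converse direction. All the covering-theoretic input required --- the deck-group description of a universal covering, the exact sequence comparing $\pi_1(\tilde X,\tilde x)$, $\pi_1(X,x)$ and $\Lambda$, and the characterization of universal coverings --- is already available in \cite{Eisermann2014quandle}, so no genuinely new ingredient is needed.
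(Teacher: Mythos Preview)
The paper does not supply its own proof of this proposition: it is introduced with ``We recall here \cite[Proposition~5.9]{Eisermann2014quandle}'' and then used as a black box to derive Proposition~\ref{prop:second_quandle_homology}. Your sketch is essentially a reconstruction of Eisermann's argument, and the construction of $h$ and the verification that it is a homomorphism are correct.

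One small imprecision worth flagging in your converse direction: condition~$(\textrm{E1})$ gives $\lambda\cdot(\alpha\ast\beta)=(\lambda\cdot\alpha)\ast\beta$, not $(\lambda\cdot\alpha)\ast(\lambda\cdot\beta)$, so the map $\alpha\mapsto\lambda\cdot\alpha$ is not a quandle automorphism in the usual sense. What it \emph{does} satisfy is commutation with every $S_{\tilde y}$, so it still permutes the $\mathrm{Inn}(\tilde X)$-orbits, i.e.\ the connected components of $\tilde X$; this is all your connectedness argument actually uses, so the conclusion survives. With that correction your outline matches the standard covering-theoretic proof.
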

By this proposition, we have the following:
\begin{prop}
\label{prop:second_quandle_homology}
The fundamental group $\pi_1(Q_n(K),x)$ is group isomorphic to $A^n_K$   
for an oriented $1$-knot $K$ and an integer $n>1$.
\end{prop}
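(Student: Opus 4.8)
The plan is to read this off directly from the two main theorems of Sections~\ref{sect:central_extension} and~\ref{sect:universal_covering} together with Eisermann's Proposition~\ref{prop:Eisermann_ext-and-pi1}. First I would record that the knot $n$-quandle $Q_n(K)$ is connected, as recalled in Section~\ref{sect:quandle}, so that Proposition~\ref{prop:Eisermann_ext-and-pi1} is applicable with $X=Q_n(K)$. By Theorem~\ref{theo:central_extension}, the knot quandle $Q(\tau^n K)$ of the $n$-twist spin $\tau^n K$ is a central extension of $Q_n(K)$ by the cyclic group $A^n_K$; in particular it is an extension of $Q_n(K)$ by $A^n_K$ in the sense used in Proposition~\ref{prop:Eisermann_ext-and-pi1}. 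Hence the first assertion of that proposition gives, for each $x\in Q_n(K)$, a natural group homomorphism $h\colon \pi_1(Q_n(K),x)\to A^n_K$ induced by the associated covering $p\colon Q(\tau^n K)\to Q_n(K)$.

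Next I would invoke Theorem~\ref{theo:universal_covering}, which states exactly that this covering $p$ associated with the central extension $A^n_K\curvearrowright Q(\tau^n K)\to Q_n(K)$ is a universal covering of $(Q_n(K),x)$. The ``if and only if'' part of Proposition~\ref{prop:Eisermann_ext-and-pi1} then forces $h$ to be a group isomorphism, which is precisely the claimed statement $\pi_1(Q_n(K),x)\cong A^n_K$.

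There is essentially no obstacle here beyond bookkeeping: one only needs to check that the extension appearing in Theorem~\ref{theo:central_extension} and the universal covering identified in Theorem~\ref{theo:universal_covering} are the same map $p$, so that the homomorphism $h$ supplied by the first part of Proposition~\ref{prop:Eisermann_ext-and-pi1} is the one to which its second part applies. Since both theorems are phrased in terms of the covering associated with $A^n_K\curvearrowright Q(\tau^n K)\to Q_n(K)$, this identification is immediate, and the proof is a short formal deduction. (The genuine content—computing the order of $A^n_K$—is deferred to the subsequent subsections.)
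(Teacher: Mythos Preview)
Your proposal is correct and follows essentially the same approach as the paper: apply Proposition~\ref{prop:Eisermann_ext-and-pi1} to the connected quandle $Q_n(K)$, using Theorem~\ref{theo:central_extension} for the extension and Theorem~\ref{theo:universal_covering} for universality of the associated covering. The paper's proof is the same argument compressed into two sentences; your additional remarks about connectedness and the identification of the two occurrences of $p$ are reasonable bookkeeping but not strictly needed beyond what the paper already states.
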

\begin{proof}
Theorem~\ref{theo:central_extension} states that $Q(\tau^nK)$ is an extension of the connected quandle $Q_n(K)$ by $A^n_K$. Since the associated covering $p:Q(\tau^nK)\to Q_n(K)$ is universal by Theorem~\ref{theo:universal_covering}, 
it follows from Proposition~\ref{prop:Eisermann_ext-and-pi1} that 
$\pi_1(Q_n(K),x)$ is group isomorphic to $A^n_K$ for any $x\in Q_n(K)$.
%$A^n_K$ is group isomorphic to $\pi_1(Q_n(K),x)$ for any $x\in Q_n(K)$.
\end{proof}
It is known in \cite[Theorem~1.15]{Eisermann2014quandle} that the second quandle homology group $H^Q_2(X)$ of $X$ is group isomorphic to the abelianization of the fundamental group $\pi_1(X,x)$ for a connected pointed quandle $(X,x)$. 
Hence we have: 
%Eisermann showed that if 
%If a quandle $X$ is connected, then the \textit{second quandle homology group} $H^Q_2(X)$, introduced in \cite{Carter2003quandle}, is group isomorphic to the abelianization of $\pi_1(X,x)$; see \cite[Theorem~1.15]{Eisermann2014quandle}. Hence we have: 
%
\begin{cor}
\label{cor:homology_n-quandle}
%Let $K$ be an oriented $1$-knot and $n$ be a positive integer greater than $1$. Then t
The second quandle homology group $H^Q_2(Q_n(K))$ is group isomorphic to $A^n_K$ 
for an oriented $1$-knot $K$ and an integer $n>1$.
\end{cor}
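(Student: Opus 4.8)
The plan is to combine the two structural theorems already proved (Theorem~\ref{theo:central_extension} and Theorem~\ref{theo:universal_covering}) with the quandle-theoretic input recalled from \cite{Eisermann2014quandle}, namely Proposition~\ref{prop:Eisermann_ext-and-pi1} and \cite[Theorem~1.15]{Eisermann2014quandle}. Since Corollary~\ref{cor:homology_n-quandle} is stated immediately after Proposition~\ref{prop:second_quandle_homology} and the cited homology theorem, the argument is essentially a one-line chaining of isomorphisms; the substantive work has all been front-loaded into the earlier sections.

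Concretely, I would proceed as follows. First, invoke Proposition~\ref{prop:second_quandle_homology}, which gives a group isomorphism $\pi_1(Q_n(K),x)\cong A^n_K$ for any base element $x\in Q_n(K)$; this already packages the use of the central extension $A^n_K\curvearrowright Q(\tau^nK)\to Q_n(K)$, the universality of the associated covering, and Proposition~\ref{prop:Eisermann_ext-and-pi1}. Second, recall that $Q_n(K)$ is connected, so that $(Q_n(K),x)$ is a connected pointed quandle and \cite[Theorem~1.15]{Eisermann2014quandle} applies, yielding $H^Q_2(Q_n(K))\cong \pi_1(Q_n(K),x)^{\mathrm{ab}}$. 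Third, observe that $A^n_K$ is a cyclic group, being generated by the single element $l_K$, and hence abelian; therefore $A^n_K{}^{\mathrm{ab}}=A^n_K$. Chaining these three facts gives
\[
H^Q_2(Q_n(K))\;\cong\;\pi_1(Q_n(K),x)^{\mathrm{ab}}\;\cong\;(A^n_K)^{\mathrm{ab}}\;=\;A^n_K,
\]
which is the assertion.

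There is no real obstacle here: the corollary is a formal consequence, and the only point requiring any care is making sure the hypotheses of \cite[Theorem~1.15]{Eisermann2014quandle} are met — that is, that $Q_n(K)$ is connected, which was already noted in Section~\ref{sect:quandle} (and also follows from being a quotient of the connected quandle $Q(K)$), and that the statement is independent of the choice of base point $x$, which holds for connected quandles by the remark on $\pi_1(X,x)$ in Subsection~\ref{subsect:def_covering_and_simply_connect}. The genuinely hard parts of the overall computation of $H^Q_2(Q_n(K))$ lie elsewhere: establishing that $Q(\tau^nK)\to Q_n(K)$ is a universal covering (Theorem~\ref{theo:universal_covering}, which rests on the simple connectivity of $Q(\tau^nK)$ in Theorem~\ref{theo:simply_connect_twist_spun}) and, later, pinning down the order of $A^n_K$ by geometric arguments. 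Relative to those, the proof of Corollary~\ref{cor:homology_n-quandle} is purely bookkeeping.
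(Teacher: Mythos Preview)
Your proof is correct and follows exactly the paper's approach: the corollary is deduced immediately from Proposition~\ref{prop:second_quandle_homology} together with \cite[Theorem~1.15]{Eisermann2014quandle}, using that $A^n_K$ is cyclic and hence abelian. The paper does not even display a formal proof, as the corollary is a direct consequence of the preceding sentence.
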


%For example, by the section 6 in \cite{Inoue2021knot} and Proposition~\ref{prop:order_trefoil_n-quandle}, it holds that $H^Q_2(Q_n(T_{2,3}))=\begin{cases}
%0 \quad \quad\quad (n=2)\\
%\Z/2\Z \quad \ (n=3)\\
%\Z/4\Z \quad \ (n=4)\\
%\Z/10\Z\quad (n=5)\\
%\Z\quad \quad \quad(n\geq 6)
%\end{cases}.$

By Corollary~\ref{cor:homology_n-quandle}, in order to compute $H^Q_2(Q_n(K))$, it is sufficient to determine the order of $l_K$, which is the generator of the cyclic subgroup $A^n_K$ of $\pi_1(M^n_K)$. 
We discuss the order of $l_K$ when $K$ is prime in Subsection~\ref{subsec:case_prime_knot} and 
when $K$ is composite in Subsection~\ref{subsec:case_composite_knot}, respectively.

%%%%%%%%%%%%%%%%%%%%%%%%%%%%%%%%%%%%%%%%%%%%%%%%%%%%%%%%
\subsection{The case of prime knots}
\label{subsec:case_prime_knot}
%In this subsection, w
We compute the order of $l_K$ in $\pi_1(M^n_K)$ when $K$ is prime. 
%At first, we will discuss the case where the fundamental group $\pi_1(M^n_K)$ is finite.
First, we discuss the case where $\pi_1(M^n_K)$ is finite. 
Next, we discuss the case where $\pi_1(M^n_K)$ is infinite.

%%%%%%%%%%%%%%%%%%%%%%%%%%%%%%%%%%%%%%%%%%%%%%%%%%%%%%%%
\subsubsection{The case where $\pi_1(M^n_K)$ is finite}
%\subsubsection{\bf The fundamental group of $M^n_K$ is finite}
%First, we discuss the case where the fundamental group $\pi_1(M^n_K)$ is finite.
We compute the order of $l_K$ in $\pi_1(M^n_K)$ when $\pi_1(M^n_K)$ is finite. 
By the elliptization theorem, the group $\pi_1(M^n_K)$ is finite if and only if the universal covering space of $M^n_K$ is $S^3$; see \cite[Subsection~1.7]{Aschenbrenner2015}. 
%In \cite{Dunbar1988geometric}, Dunbar gave a complete list of pairs $(K,n)$ satisfying that the universal covering space of $M^n_K$ is $S^3$. 
According to the Dunbar's classification \cite{Dunbar1988geometric} of spherical orbifolds, we have the following: 

\begin{lemm}\label{lemm:finite}
For an oriented prime $1$-knot $K$ and an integer $n>1$, 
if the universal covering space of $M^n_K$ is $S^3$, then one of the following holds:
 \begin{itemize}
 \item[$(1)$] A knot $K$ is a $2$-bridge knot $S(\alpha,\beta)$ and $n=2$.
 \item[$(2)$] A knot $K$ is a Montesinos knot $M(b;\frac{1}{2},\frac{\beta_2}{3},\frac{\beta_3}{3})$ and $n=2$.
 \item[$(3)$] A knot $K$ is a Montesinos knot $M(b;\frac{1}{2},\frac{\beta_2}{3},\frac{\beta_3}{5})$ and $n=2$.
 \item[$(4)$] A knot $K$ is the trefoil $3_1$ and $n=3,4,5$.
 \item[$(5)$] A knot $K$ is the cinquefoil $5_1$ and $n=3$.
 \end{itemize}  
\end{lemm}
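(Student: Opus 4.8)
The statement is essentially a translation of Dunbar's classification of spherically uniformized $3$-orbifolds into the language of branched coverings of knots. The plan is to relate $M^n_K$ to the orbifold $\mathcal{O}(K,n)$ whose underlying space is $S^3$, whose singular locus is $K$, and whose cone angle along $K$ is $2\pi/n$. By the Orbifold Theorem (see \cite{Aschenbrenner2015}, and note that since $K$ is prime the orbifold is irreducible and small provided we exclude the trivial knot, which contributes case (1) with $\alpha=1$), this orbifold is geometric. The hypothesis that the universal cover of $M^n_K$ is $S^3$ forces $\pi_1^{\mathrm{orb}}(\mathcal{O}(K,n))$ to be finite, hence $\mathcal{O}(K,n)$ is a spherical orbifold, i.e.\ a quotient $S^3/\Gamma$ with $\Gamma \subset SO(4)$ finite.

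Next I would invoke Dunbar's enumeration \cite{Dunbar1988geometric} of the orientable spherical $3$-orbifolds whose singular locus is a knot (rather than a link or a graph with vertices). Dunbar's list, restricted to the case where the singular set is a single unknotted-or-knotted circle, gives precisely a short list of (base knot, cone order) pairs. Here the main work is bookkeeping: one must match each orbifold on Dunbar's list with the knot $K$ appearing as its singular locus and the integer $n$ appearing as the cone order, and then recognize these knots in standard terms --- $2$-bridge knots $S(\alpha,\beta)$, the Montesinos knots $M(b;\tfrac12,\tfrac{\beta_2}{3},\tfrac{\beta_3}{3})$ and $M(b;\tfrac12,\tfrac{\beta_2}{3},\tfrac{\beta_3}{5})$, and the torus knots $3_1$ and $5_1$. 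The $2$-bridge case with $n=2$ is classical (the double branched cover of a $2$-bridge knot is a lens space, which is spherical); the Montesinos cases with $n=2$ correspond to the spherical Seifert-fibered or small Seifert-fibered double branched covers with base orbifold $S^2(2,3,3)$, $S^2(2,3,4)$, $S^2(2,3,5)$ --- here one uses that the double branched cover of $M(b;r_1,r_2,r_3)$ is the Brieskorn-type Seifert manifold determined by the $r_i$, and sphericity of a Seifert manifold is governed by the triangle-group condition $\tfrac1{q_1}+\tfrac1{q_2}+\tfrac1{q_3}>1$. The torus-knot cases follow because $\pi_1(M^n_{T(p,q)})$ is a central extension of a triangle group $\Delta(p,q,n)$, which is finite exactly for the listed pairs $(p,q,n)$, namely $(2,3,3),(2,3,4),(2,3,5)$ and $(2,5,3)$.

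Finally I would assemble these pieces: sphericity of $M^n_K$ (equivalently of $\mathcal{O}(K,n)$) holds if and only if $(K,n)$ is one of the listed pairs, and a clean way to organize the proof is to go through Dunbar's table line by line, discarding the entries whose singular graph is not a knot, and identifying the survivors. The bulk of the remaining effort is purely expository --- producing, for each surviving orbifold, the $2$-bridge or Montesinos description of its singular knot --- and I would relegate those identifications to a remark or cite the relevant passages of \cite{Dunbar1988geometric} and the Montesinos-knot literature.

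\emph{Main obstacle.} The conceptual content (Orbifold Theorem plus Dunbar's classification) is off the shelf; the genuine difficulty is the translation step, i.e.\ certifying that the knots appearing as singular loci in Dunbar's spherical orbifolds are exactly the families named in (1)--(5), with the stated cone orders. This is where one must be careful not to miss a case or to mislabel a Montesinos knot, and it is the step I expect to occupy most of the proof.
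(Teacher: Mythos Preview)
Your approach is essentially the same as the paper's: the paper does not give a proof of this lemma at all, but simply states it as a consequence of Dunbar's classification \cite{Dunbar1988geometric} of spherical $3$-orbifolds. Your proposal is a correct and reasonable elaboration of what extracting the lemma from Dunbar's tables actually entails --- passing to the cone orbifold $\mathcal{O}(K,n)$, observing that finiteness of $\pi_1(M^n_K)$ forces it to be spherical, and then reading off from Dunbar's list which orbifolds have a single circle as singular locus.

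One small correction in your bookkeeping sketch: you list $S^2(2,3,4)$ among the spherical Seifert base orbifolds arising from Montesinos knots, but a Montesinos tangle with denominators $(2,3,4)$ has two even denominators and hence yields a link with more than one component, not a knot. This is exactly why only the $(2,3,3)$ and $(2,3,5)$ Montesinos families appear in cases~(2) and~(3). You have already flagged this translation step as the place requiring care, so this is not a gap in your plan, just a point to watch when you carry it out.
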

 
%%%%%%%%%%%%%%%%%%%%%%%%%%% 
%\begin{figure}[h]
% 	\center
%  	\includegraphics[scale=.3]{Montesinos_knot.pdf}
%	\caption{A diagram of the Montesinos knot $M(b;\frac{\beta_1}{\alpha_1},\ldots,\frac{\beta_m}{\alpha_m})$}
%  	\label{Montesinos_knot}
%\end{figure}
%
%$\mathfrak{b}(\alpha,\beta)$
%$\mathfrak{m}(b;\frac{\beta_1}{\alpha_1},\frac{\beta_2}{\alpha_2},\frac{\beta_3}{\alpha_3})$
%%%%%%%%%%%%%%%%%%%%%%%%%%%

Here we follow convention of the textbook \cite{Burde2014knots} on knot theory for notations of a $2$-bridge knot $S(\alpha,\beta)$ and a Montesinos knot $M(b;\frac{\beta_1}{\alpha_1},\frac{\beta_2}{\alpha_2},\frac{\beta_3}{\alpha_3})$,  where the integers $\alpha, \alpha_i, \beta$ and $\beta_i$ satisfy $\alpha \geq 2, \alpha_i \geq 2$ and ${\rm gcd}(\alpha,\beta)={\rm gcd}(\alpha_i,\beta_i)=1$ for $i=1,2,3$. 
%
%Here, a {\it Montesinos knot} $M(b;\frac{\beta_1}{\alpha_1},\ldots,\frac{\beta_m}{\alpha_m})$ is a 1-knot which has a projection as shown in Figure~\ref{Montesinos_knot}. The numbers $b,m,\alpha_i$ and $\beta_i$ are integers such that $m\geq 1,\alpha_i\geq 2$ and ${\rm gcd}(\alpha_i,\beta_i)=1$ for $1\leq i\leq m$. The integer $b$ in the figure is the number of right-handed half twists, and a box \fbox{$\frac{\beta_i}{\alpha_i}$} stands for a rational tangle of slope $\frac{\beta_i}{\alpha_i}$.

%As a consequence of Proposition~\ref{prop:universal_cover_of_branched_cover}, we see that if $\pi_1(M^n_K)$ is finite (resp. infinite), the universal covering space of $M^n_K$ is $S^3$ (resp. $\R^3$). 

\begin{prop}\label{prop:finite}
For an oriented prime $1$-knot $K$ and an integer $n>1$, 
%Let $K$ be an oriented prime $1$-knot and $n$ an integer greater than $1$. 
if $\pi_1(M^n_K)$ is finite, %that is, $K$ is one of the above five classes of $1$-knots, 
then we have the following table for the order of $l_K$. %in $\pi_1(M^n_K)$. 
%If the universal covering space of $M^n_K$ is $S^3$, then we have the following table:
%\end{prop}
%%%%%%%%%%%%%%%%%%%%%%%%%%%%%%%%%%%%%%%
%\begin{table}[h]
%\label{table:order_l_K}
%
\[\begin{array}{c || c  c c | c c c | c}
%\begin{tabular}{c | c c c c c c c}
\toprule %hline
\text{\rm Lemma~\ref{lemm:finite}} & (1) & (2) & (3) & & (4) & & (5) \\ 
\midrule   
K & S(\alpha,\beta) & M(b;\frac{1}{2},\frac{\beta_2}{3},\frac{\beta_3}{3}) &
M(b;\frac{1}{2},\frac{\beta_2}{3},\frac{\beta_3}{5}) &
3_1 & 3_1 & 3_1 & 5_1 \\
%K & S(\alpha,\beta) & M(b;1/2,\beta_2/3,\beta_3/3) & M(b;1/2,\beta_2/3,\beta_3/5) &
%3_1 & 5_1 & 3_1 & 3_1 \\
n & 2 & 2 & 2 & 3 & 4 & 5 & 3 \\
\midrule %\hline
\text{\rm order of} \ l_K & 1 & 2 & 4 & 2 & 4 & 10 & 6 \\ 
\bottomrule %\hline
\end{array}\]
%
%\vspace{2mm}
%\caption{The order of $l_K$ }
%\label{table:order_l_K}
%\end{table}
%%%%%%%%%%%%%%%%%%%%%%%%%%%%%%%%%%%%%%%
\end{prop}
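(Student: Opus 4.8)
The plan is to prove Proposition~\ref{prop:finite} by going case by case through Lemma~\ref{lemm:finite}, in each case identifying the finite group $\pi_1(M^n_K)$ explicitly and then locating the image of $l_K$ inside it. The guiding principle, already noted in Subsection~\ref{subsect:structure_n-quandle}, is that $l_K$ is represented by the branching set of the covering $M^n_K \to S^3$; so in each case I want to realize $M^n_K$ as a spherical space form $S^3/\Gamma$ with $\Gamma \cong \pi_1(M^n_K)$ and understand the core circle of the branch locus as an element of $\Gamma$. Concretely, for each of the five families I would first use Dunbar's classification (or the companion Seifert-fibered picture) to name $\pi_1(M^n_K)$, then compute the order of the distinguished element corresponding to $l_K$.

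For case $(1)$, $K = S(\alpha,\beta)$ with $n=2$: here $M^2_K$ is the lens space $L(\alpha,\beta)$, whose fundamental group is cyclic of order $\alpha$, and $l_K$ lifts the longitude, which bounds a surface in $S^3 \setminus K$; since the longitude is null-homologous and in the double branched cover it is in fact null-homotopic (the branch circle in a lens space presentation of the double cover of a two-bridge knot is isotopic to a core, whose square — the relevant power after lifting — is trivial), the order of $l_K$ is $1$. For case $(4)$ with $K=3_1$: $M^n_{3_1}$ for $n=3,4,5$ has fundamental group the binary dihedral/tetrahedral/octahedral/icosahedral-type groups (respectively $Q_8$, the binary tetrahedral group $\widetilde{A_4}$, and the binary icosahedral group $\widetilde{A_5}$ — the Brieskorn spheres $\Sigma(2,3,n)$), and the longitude of the trefoil lifts to an element whose order one reads off from the Seifert-fibered structure: a regular fiber, which is central, and whose order I expect to come out as $2$, $4$, $10$ respectively matching the table. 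Case $(5)$, $K=5_1$, $n=3$: $M^3_{5_1} = \Sigma(2,5,3)$ again a Brieskorn sphere with $\pi_1$ a finite (binary icosahedral-type) group, and the fiber-class computation should give order $6$. Cases $(2)$ and $(3)$, the Montesinos knots with $n=2$: $M^2_K$ is a Seifert-fibered spherical space form over $S^2(2,3,3)$ or $S^2(2,3,5)$, with $\pi_1$ a central extension of $A_4$ or $A_5$; here $l_K$ again lifts the branch circle, which I expect to be a product of exceptional-fiber classes, giving orders $2$ and $4$.

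The uniform tool I would lean on is the Seifert invariants: whenever $M^n_K$ is a spherical Seifert-fibered space, one has a presentation of $\pi_1$ with generators $x_1,\dots,x_r$ (exceptional fibers) and $h$ (regular fiber, central), relations $x_j^{\alpha_j} h^{\beta_j}=1$ and $x_1\cdots x_r = h^{-b}$, and the longitude $l_K$ — being the core of the branch set — is one of the $x_j$ (or a product of $h$-powers with such), so its order is computed from these relations. The main obstacle is the bookkeeping: correctly matching the Burde–Zieschang/Dunbar normalization of the Montesinos and two-bridge parameters $(\alpha,\beta)$, $(b;\beta_i/\alpha_i)$ to the Seifert invariants of the branched cover, keeping track of orientations and sign conventions so that the longitude (not the meridian, and with the correct framing) is identified, and then verifying that the resulting element really has the claimed order rather than a divisor of it. I expect cases $(4)$ and $(5)$, where the Brieskorn-sphere description makes everything completely explicit, to be routine; the genuinely delicate part is $(2)$ and $(3)$, where I would want to double-check the order against an independent computation — for instance, via the Goeritz/linking form of the double branched cover, or by a direct $\mathrm{SL}(2,\C)$ or finite-group representation of $G_2(K)$ — before committing to the entries $2$ and $4$.
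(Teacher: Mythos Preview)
Your route differs substantially from the paper's. The paper never works inside a Seifert presentation. For case~(4) it simply cites Inoue, who already computed the order of $l_{3_1}$ in $\pi_1(M^n_{3_1})$ for $n=3,4,5$. For case~(5) it exploits Proposition~\ref{prop:knot_n_qdle}: since $Q_n(K)\cong A^n_K\backslash\pi_1(M^n_K)$, one has $|A^n_K|=|\pi_1(M^n_K)|/|Q_n(K)|$, and the values $|\pi_1(M^3_{5_1})|=120$ and $|Q_3(5_1)|=20$ are taken from the literature to give order~$6$. For cases~(1)--(3) the paper lifts to the universal cover $p\colon S^3\to M^2_K$, sets $L=p^{-1}(\tilde K)$, and observes that the order of $l_K=[\tilde K]$ equals $|\pi_1(M^2_K)|/c_L$, where $c_L$ is the number of components of $L$; both quantities are then read off from Sakuma's tables on spherical Montesinos links. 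This completely sidesteps the problem of naming $l_K$ as a word in generators of $\pi_1(M^2_K)$.

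That identification is where your plan has a genuine gap, not merely bookkeeping. The Brieskorn cases~(4) and~(5) are fine: there the branch set really is the multiplicity-$n$ exceptional fibre, and your approach would go through. But for $n=2$ and $K$ Montesinos (cases~(2) and~(3)), the covering involution on $M^2_K$ acts on each fibred solid torus as $(z,\theta)\mapsto(\bar z,-\theta)$, reversing fibre orientation; its fixed set $\tilde K$ is therefore a \emph{horizontal} curve meeting every exceptional fibre transversally, not one of the $x_j$, and its order cannot be read from any single relation $x_j^{\alpha_j}h^{\beta_j}=1$. Your case~(1) sketch has the same defect: a core of a Heegaard solid torus in $L(\alpha,\beta)$ generates $\pi_1$, so ``isotopic to a core'' would give order~$\alpha$, not~$1$ (in fact the branch set is null-homotopic, hence not a core at all). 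You could in principle salvage cases~(1)--(3) by expressing the horizontal branch curve explicitly in the Seifert generators and computing its order there, but that is precisely the delicate step the paper's covering/Sakuma argument is designed to avoid.
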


\begin{proof}
% If $(K,n)$ satisfies the case (iv), Inoue \cite{Inoue2021knot} computed the order of $l_K$.
First, we discuss the cases (4) and (5) in Lemma~\ref{lemm:finite}. 

\medskip
\underline{Case (4).} According to \cite{Inoue2021knot}, if $n$ is 3, 4 or 5, then it holds that the order of $l_{3_1}$ is equal to 2, 4 or 10, respectively in $\pi_1(M^n_{3_1})$. 

\medskip
\underline{Case (5).} It is shown in \cite{Crans2019finite} that the order of $Q_3(5_1)$ is equal to $20$. 
%Crans, et. al. \cite{Crans2019finite} showed that the order of $Q_3(5_1)$ is equal to $20$. 
By Proposition~\ref{prop:knot_n_qdle} and Corollary~\ref{cor:homology_n-quandle}, the group $H^Q_2(Q_3(5_1))$ is the cyclic group whose order is equal to $|Q(\tau^3(5_1))|/|Q_3(5_1)|$.  Since $|Q(\tau^3(5_1))|=|\pi_1(M^3_{5_1})|=120$, it holds that $H^Q_2(Q_3(5_1))$ is group isomorphic to $\Z/6\Z$, which implies that the order of $l_{5_1}$ is equal to $6$ in $\pi_1(M^3_{5_1})$. 

\medskip
Next we discuss the cases (1), (2) and (3) in Lemma~\ref{lemm:finite}. 
In these cases, we have $n=2$ and the universal covering space of $M^2_K$ is $S^3$. 
%Next, we consider the case where $n$ and $K$ satisfy the condition (1), (2) or (3), that is, $n=2$ and the universal covering space of $M^2_K$ is $S^3$. 
%
Let $p:S^3\to M^2_K$ be the universal covering of $M^2_K$. Then, the inverse image $L:=p^{-1}(\tilde{K})$ of the branching set $\tilde{K}$ of $M^n_K$ is an oriented $1$-link in $S^3$, where the orientation of $L$ is induced by the orientation of $\tilde{K}$. 
%Let $L'$ be a component of $L$, where $L'$ may coincide with $L$. 
For a component $L'$ of $L$, 
we see that the restriction map $p|_{L'}:L' \to \tilde{K}$ is a covering and that the degree of $p|_{L'}$ coincides with the order of $l_K = [\tilde{K}] \in\pi_1(M^2_K)$. Since $p$ is a covering, it holds that the degree of $p|_{L'}$ is equal to $|\pi_1(M^2_K)|/c_L$, where $c_L$ denote the number of the components of $L$.  
%$|\pi_1(M^2_K)|/|\{\textrm{components of }L\}|$. 
Thus, it is sufficient to compute the number $c_L$ for the oriented $1$-link $L=p^{-1}(\tilde{K})$ in $S^3$, where $L$ has been studied in detail by Sakuma in~\cite[Section~5]{Sakuma1990geometries}. 
%In \cite[Section~5]{Sakuma1990geometries}, Sakuma gave the description of $L$.
 
\medskip
\underline{Case (1).} Suppose that $K$ is a 2-bridge knot $S(\alpha.\beta)$. 
%of type $(\alpha,\beta)$. 
Then we have %the number of components of $L$
$c_L = \alpha$ by \cite[Theorem~5.3]{Sakuma1990geometries}. 
Since $|\pi_1(M^2_K)| = \alpha$, we see that the order of $l_K$ is equal to $1$ in $\pi_1(M^2_K)$. 
%that $c_L$ is equal to $\alpha$ by \cite[Theorem~5.3]{Sakuma1990geometries}. 
%Since $|\pi_1(M^2_K)|$ is equal to $\alpha$, we see that the order of $l_K$ is equal to $1$ in $\pi_1(M^2_K)$. 

\medskip
\underline{Case (2).} Suppose that $K$ is a Montesinos knot $M(b;\frac{1}{2},\frac{\beta_2}{3},\frac{\beta_3}{3})$. We set $\mu=|6(-b+\frac{1}{2}+\frac{\beta_2}{3}+\frac{\beta_3}{3})|$. 
Then we have %the number of components of $L$ 
$c_L = 12 \mu$ and $|\pi_1(M^2_K)| = 24\mu$ by \cite[Theorem~5.3 and Theorem~2.10]{Sakuma1990geometries}, respectively. 
%that $c_L$ is equal to $12 \mu$ and that $|\pi_1(M^2_K)|$ is equal to $24\mu$ by \cite[Theorem~5.3 and Theorem~2.10]{Sakuma1990geometries}, respectively. 
Hence we see that the order of $l_K$ is equal to $2$ in $\pi_1(M^2_K)$. 

\medskip
\underline{Case (3).} Suppose that $K$ is a Montesinos knot $M(b;\frac{1}{2},\frac{\beta_2}{3},\frac{\beta_3}{5})$. We set $\mu=|30(-b+\frac{1}{2}+\frac{\beta_2}{3}+\frac{\beta_3}{5})|$. 
Then we have %the number of components of $L$ 
$c_L = 30 \mu$ and $|\pi_1(M^2_K)| = 120\mu$ by \cite[Theorem~5.3 and Theorem~2.10]{Sakuma1990geometries}, respectively. 
%that $c_L$ is equal to $30 \mu$ and that $|\pi_1(M^2_K)|$ is equal to $120\mu$ by \cite[Theorem~5.3 and Theorem~2.10]{Sakuma1990geometries}, respectively. 
Hence we see that the order of $l_K$ is equal to $4$ in $\pi_1(M^2_K)$. 
\end{proof}

%%%%%%%%%%%%%%%%%%%%%%%%%%%%%%%%%%%%%%%%%%%%%%%%%%%%%%%%
\subsubsection{The case where $\pi_1(M^n_K)$ is infinite}
%\subsubsection{\bf The fundamental group of $M^n_K$ is infinite}
%Next, we discuss the case where the group $\pi_1(M^n_K)$ is infinite. 
We compute the order of $l_K$ in $\pi_1(M^n_K)$ when $\pi_1(M^n_K)$ is infinite.

\begin{lemm}
\label{lemm:universal_cover_of_branched_cover}
%Let $K$ be an oriented prime 1-knot and $n$ an integer greater than $1$. 
For an oriented prime $1$-knot $K$ and an integer $n$ greater than $1$,  
if the fundamental group $\pi_1(M^n_K)$ is infinite, 
then the universal covering space of $M^n_K$ is homeomorphic to $\R^3$. 
\end{lemm}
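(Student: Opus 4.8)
The plan is to invoke the geometrization of orbifolds together with the fact that $M^n_K$, for $K$ prime, arises as the underlying space of a geometric $3$-orbifold whose singular locus is the branch knot. Since $K$ is prime, $S^3$ together with $K$ as singular locus of cone angle $2\pi/n$ defines an orbifold $\mathcal{O}^n_K$, and $M^n_K$ is a manifold $n$-fold cover of $\mathcal{O}^n_K$ (branched over $K$). First I would recall that by the orbifold geometrization theorem (the positively curved and spherical cases being excluded here by hypothesis, since $\pi_1(M^n_K)$ infinite rules out the spherical geometry by the elliptization theorem as in the previous subsection), the orbifold $\mathcal{O}^n_K$ admits one of the six remaining Thurston geometries, and hence so does $M^n_K$ as a finite cover. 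The universal cover of $M^n_K$ is then the model space of that geometry.

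The key step is to rule out the reducible and toroidal obstructions and the products $S^2\times\R$. For $K$ prime and $n>1$, $M^n_K$ is irreducible: this is classical (the $n$-fold branched cover of a prime knot is irreducible; e.g. it follows from the equivariant sphere theorem, or one may cite that $M^n_K$ is irreducible whenever $K$ is prime, as recorded in work on cyclic branched covers). Hence $S^2\times\R$ cannot occur, as that would force $\pi_1$ to be virtually $\Z$ with a reducible manifold. Among the remaining five geometries—$\R^3$, $\mathrm{Nil}$, $\mathrm{Sol}$, $\widetilde{\mathrm{SL}_2}$, and $\mathbb{H}^3$—the universal cover is in each case homeomorphic (indeed diffeomorphic) to $\R^3$. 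So once we know $M^n_K$ is closed, orientable, irreducible, with infinite $\pi_1$, and geometric, the conclusion follows.

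What remains is to confirm geometricity, i.e. that $M^n_K$ itself (not merely a further decomposition) carries a geometric structure. This is where primeness of $K$ is used in an essential way beyond irreducibility: I would argue that $M^n_K$ is not only irreducible but atoroidal-or-Seifert-fibered once $K$ is prime, so that its JSJ decomposition is trivial and geometrization applies directly. More carefully, even if the JSJ decomposition of $M^n_K$ were nontrivial, each piece would be geometric with universal cover $\R^3$, and one can then assemble a single simply-connected $3$-manifold obtained by gluing copies of $\R^3$ along planes; such a manifold is homeomorphic to $\R^3$ by standard arguments (it is an open, simply-connected, irreducible $3$-manifold which is a union of handlebody-free pieces, hence $\R^3$ by the classification of such, or directly because the universal cover of an aspherical closed $3$-manifold with infinite $\pi_1$ is $\R^3$ by work of Scott and others building on geometrization). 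Indeed the cleanest route is: $M^n_K$ is closed, orientable, irreducible with infinite fundamental group; by geometrization it is aspherical, so $\pi_2(M^n_K)=\pi_3(M^n_K)=0$; its universal cover $\widetilde{M^n_K}$ is thus a contractible open $3$-manifold, and being the universal cover of a closed aspherical $3$-manifold it is homeomorphic to $\R^3$ (this is a theorem following from geometrization, since each of the relevant model geometries has underlying space $\R^3$ and the JSJ pieces glue along $\pi_1$-injective tori).

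The main obstacle I anticipate is purely expository: making precise that primeness of $K$ yields irreducibility of $M^n_K$ with the right citation, and then citing the correct consolidated statement of geometrization that gives ``closed aspherical $3$-manifold $\Rightarrow$ universal cover is $\R^3$'' without having to case-check all six geometries by hand. I would handle this by citing \cite{Aschenbrenner2015} for the statement that a closed, orientable, irreducible $3$-manifold with infinite fundamental group is aspherical and has universal cover homeomorphic to $\R^3$, and separately recording the irreducibility of $M^n_K$ for prime $K$; the rest is immediate.
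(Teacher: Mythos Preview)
Your proposal is correct in substance and close in spirit to the paper's argument, but the paper organizes the proof more cleanly and avoids your detours through orbifold geometrization and the JSJ decomposition. Both proofs begin by establishing that $M^n_K$ is irreducible via the equivariant sphere theorem (the paper cites \cite{Meeks1980topology} and \cite{Sakuma1982regular}). From there, the paper simply bifurcates: if $M^n_K$ contains an essential torus it is Haken, and Waldhausen's theorem gives universal cover $\R^3$ directly; if it is atoroidal, then geometrization puts one of the eight Thurston geometries on $M^n_K$ itself, and one rules out $S^3$ (by infinite $\pi_1$) and $S^2\times\R$ (by $\pi_2(M^n_K)=0$ from irreducibility), leaving only geometries modeled on $\R^3$. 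This Haken/atoroidal split completely sidesteps the JSJ gluing issue you worry about, and it does not require invoking the orbifold theorem at all. Incidentally, your enumeration of the remaining geometries omits $\mathbb{H}^2\times\mathbb{E}$; the paper lists all six non-spherical, non-$S^2\times\mathbb{E}$ geometries explicitly. Your final ``cleanest route'' (closed, irreducible, infinite $\pi_1$ $\Rightarrow$ aspherical $\Rightarrow$ universal cover $\R^3$, citing \cite{Aschenbrenner2015}) is a valid shortcut and would also work.
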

\begin{proof}
We note that by the equivalent sphere theorem \cite{Meeks1980topology}, the branched covering space $M^n_K$ is irreducible. See also \cite[Theorem~2]{Sakuma1982regular}. 
%\underline{Case 1.} Suppose that the fundamental group $\pi_1(M^n_K)$ is finite. As consequence of the geometrization theorem, the manifold $M^n_K$ has spherical geometry, in other words, the universal covering space of $M^n_K$ is $S^3$.\\
%\underline{Case 2.} Suppose that the fundamental group $\pi_1(M^n_k)$ is infinite and 

Suppose that $M^n_K$ contains an essential torus. Then we see that $M^n_K$ is a Haken manifold. 
%If $M^n_K$ contains an essential torus, then $M^n_K$ is a Haken manifold. 
Waldhausen \cite{Waldhausen1968irreducible} showed that the universal covering space of a Haken manifold is $\R^3$. Thus, the universal covering space of $M^n_K$ is $\R^3$. 

Suppose that $M^n_K$ does not contain an essential torus. According to the geometrization theorem, we see that $M^n_K$ admits one of the following eight geometries: $S^3,S^2\times\mathbb{E},\mathbb{E}^3,\mathbb{H}^3,\mathbb{H}^2\times\mathbb{E},{\rm Nil},{\rm Sol}$ and $\widetilde{{\rm SL}_2(\R)}$. 
%
%Here a closed orientable 3-manifold $M$ {\it admits the geometry} $X$ if $M$ can be presented as the quotient $X/\Gamma$ by a discrete group $\Gamma<{\rm Isom}(X)$ acting freely and discontinuously on $X$. 
%
We note that each underlying space of six geometries except the first two 
%, $S^3$ and $S^2\times\mathbb{E}$, 
of the eight is $\R^3$. 
%We note that the each underlying space of $\mathbb{E}^3,\mathbb{H}^3,\mathbb{H}^2\times\mathbb{E},{\rm Nil},{\rm Sol}$ and $\widetilde{{\rm SL}_2(\R)}$ is $\R^3$. 
Thus, the universal covering space of $M^n_K$ is either $S^3,S^2\times\R$ or $\R^3$. Since $M^n_K$ is irreducible, we see that $\pi_2(M^n_K)=0$ by \cite{Papakyriakopoulos1957Dehn}. This implies that the universal covering space of $M^n_K$ is not homeomorphic to $S^2\times \R$. As we mentioned just before Lemma~\ref{lemm:finite}, if the universal covering space of $M^n_K$ is homeomorphic to $S^3$, the fundamental group $\pi_1(M^n_K)$ is finite. Hence, we see that the universal covering space of $M^n_K$ is $\R^3$.
\end{proof}
%To prove our claim, we need the Smith's result. 
%=========================================
\begin{comment}
Let $q$ be a prime integer. A simplicial complex $X$ is {\it $q$-acyclic} if $H_n(X;\F_q)=\begin{cases}
\F_q\quad {\rm if}\ n=0,\textrm{ and}\\
0\quad\ \ {\rm if}\ n\neq 0.
\end{cases}$ 
\begin{theo}{\rm (cf. \cite[Theorem~5.2 in Chapter~III]{Bredon1972introduction})}
\label{theo:smith}
Let $q$ be a prime integer, $G$ a finite $q$-group, $X$ a finite dimensional simplicial $G$-complex and $X^G$ be the fixed point set of $X$ under the action by $G$. If $X$ is $q$-acyclic, then $X^G$ is also $q$-acyclic.
\end{theo}
\end{comment}
%=========================================
\begin{prop}
\label{prop:quandle_homology_case_infinite}
%Let $K$ be an oriented prime 1-knot and $n$ an integer greater than $1$. 
For an oriented prime $1$-knot $K$ and an integer $n$ greater than $1$,  
if the group $\pi_1(M^n_K)$  is infinite, 
%that is, the pair of $K$ and $n$ is not in the table of Proposition~\ref{prop:finite}, 
then $l_K$ is not a torsion element in $\pi_1(M^n_K)$, that is, the order of $l_K$ is infinite in $\pi_1(M^n_K)$. 
%then $H^Q_2(Q_n(K))$ is the infinite cyclic group.
\end{prop}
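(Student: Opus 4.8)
The plan is to use the geometric fact, established in Lemma~\ref{lemm:universal_cover_of_branched_cover}, that the universal covering space $\widetilde{M}$ of $M^n_K$ is homeomorphic to $\R^3$. Since $\R^3$ is contractible, the branched covering $M^n_K$ is aspherical, i.e.\ a $K(\pi,1)$ with $\pi = \pi_1(M^n_K)$. A standard consequence of asphericity is that $\pi_1(M^n_K)$ is torsion-free: if a nontrivial finite cyclic subgroup $\Z/k\Z$ existed, then the corresponding covering space of $M^n_K$ would be a finite-dimensional $K(\Z/k\Z,1)$, contradicting the fact that $H_*(\Z/k\Z;\Z)$ is nonzero in infinitely many degrees (equivalently, a $K(\Z/k\Z,1)$ cannot be a finite-dimensional manifold or CW complex). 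Therefore every nontrivial element of $\pi_1(M^n_K)$, in particular $l_K$ (which is nontrivial because $\pi_1(M^n_K)$ is infinite and, as noted in Subsection~\ref{subsect:structure_n-quandle}, $l_K$ is represented by the branching set), has infinite order.

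First I would record that $M^n_K$ is aspherical: $\widetilde{M}\cong\R^3$ is contractible, so $\pi_k(M^n_K)=\pi_k(\widetilde{M})=0$ for $k\geq 2$ and $M^n_K$ is a $K(\pi_1(M^n_K),1)$. Next I would invoke the group-cohomological criterion for torsion-freeness: a group admitting a finite-dimensional $K(\pi,1)$ (here a $3$-manifold) has finite cohomological dimension, and a group of finite cohomological dimension is torsion-free, since any nontrivial finite subgroup would have infinite cohomological dimension. Applying this to $\pi = \pi_1(M^n_K)$ shows $\pi$ is torsion-free. Finally, since $\pi_1(M^n_K)$ is assumed infinite it is in particular nontrivial, and $l_K$ represents the branching circle of $M^n_K$, whose preimage in $S^3\setminus K$ is the knot $K$ itself; one checks $l_K\neq 1$ in $\pi_1(M^n_K)$ because otherwise $M^n_K$ would be covered by a manifold in which the branching lift is null-homotopic, forcing $\pi_1(M^n_K)$ to be a quotient of $\pi_1(M^n_K)/\langle\langle l_K\rangle\rangle$, which one can rule out knot-theoretically — alternatively, appeal directly to the fact that $\pi_1(M^n_K)$ infinite already gives that $\pi$ has no torsion, and if $l_K=1$ the conclusion ``order of $l_K$ is infinite'' would need reinterpretation, so the genuinely needed input is only torsion-freeness plus $l_K\neq 1$.

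The main obstacle is verifying $l_K \neq 1$ in $\pi_1(M^n_K)$ cleanly; I would handle this by noting that a lift $L'$ of the branching circle generates the cyclic subgroup $A^n_K$, and that triviality of $l_K$ would, via Corollary~\ref{cor:homology_n-quandle}, force $H_2^Q(Q_n(K))=0$ and $Q(\tau^n K)\cong Q_n(K)$ (cf.\ the Remark after Theorem~\ref{theo:central_extension}); one then excludes this for prime $K$ with $\pi_1(M^n_K)$ infinite by a separate short argument, or simply subsumes the $l_K=1$ possibility into the statement by declaring the order infinite vacuously false and instead proving the contrapositive: if $l_K$ has finite order $k$, then $\langle l_K\rangle\cong\Z/k\Z\hookrightarrow\pi_1(M^n_K)$ is a finite subgroup of a torsion-free group, so $k=1$. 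Thus the clean formulation is: torsion-freeness of $\pi_1(M^n_K)$ (from asphericity via $\widetilde{M}\cong\R^3$) immediately yields that $l_K$ is either trivial or of infinite order, and the remaining task — ruling out $l_K=1$ — is where the bulk of the geometric input lies.
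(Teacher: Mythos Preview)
Your reduction to torsion-freeness is correct and matches the paper: from the lemma that the universal cover of $M^n_K$ is $\R^3$ (equivalently, that $M^n_K$ is irreducible with infinite $\pi_1$), one gets that $\pi_1(M^n_K)$ is torsion-free, either via asphericity and finite cohomological dimension as you say, or by citing the standard $3$-manifold fact directly as the paper does. So the problem is reduced, exactly as you observe, to showing $l_K\neq 1$.

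The gap is that you never actually establish $l_K\neq 1$. Your parenthetical ``nontrivial because $\pi_1(M^n_K)$ is infinite and $l_K$ is represented by the branching set'' is not an argument: nothing prevents the branching circle from being null-homotopic a priori. Your proposed alternative via Corollary~\ref{cor:homology_n-quandle} is circular, since that corollary is precisely what this proposition is being used to evaluate. And your contrapositive reformulation only shows that finite order forces $l_K=1$; it does not exclude $l_K=1$, which is the entire content of the proposition once torsion-freeness is in hand. You end by conceding that ruling out $l_K=1$ ``is where the bulk of the geometric input lies'' --- but you do not supply that input.

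The paper closes this gap with Smith theory. Assume $l_K=1$; then the branching set $\tilde K\subset M^n_K$ lifts to a disjoint union of \emph{circles} (not lines) in the universal cover $\R^3$. The order-$n$ covering transformation $\varphi$ of $M^n_K$ fixes $\tilde K$ pointwise, so it lifts to a periodic homeomorphism $\tilde\varphi$ of $\R^3$ fixing a chosen lifted circle pointwise. Taking a prime-order subgroup $G\subset\langle\tilde\varphi\rangle$, the fixed set $\mathrm{Fix}(G)\subset\R^3$ then contains a circle. But Smith theory says that the fixed set of a $\Z/p$-action on an $\F_p$-acyclic space is itself $\F_p$-acyclic, so $H_1(\mathrm{Fix}(G);\F_p)=0$, contradicting the presence of a circle. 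This is the missing geometric step.
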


\begin{proof}
Since $M^n_K$ is irreducible and  $\pi_1(M^n_K)$ is infinite, we have that 
%has an infinite fundamental group, the fundamental group 
$\pi_1(M^n_K)$ is torsion free; see \cite[Subsection~3.2~(C.3)]{Aschenbrenner2015}. Thus, it is sufficient to show that $l_K$ is nontrivial in $\pi_1(M^n_K)$. Assume that $l_K$ is the identity element in $\pi_1(M^n_K)$. By Lemma~\ref{lemm:universal_cover_of_branched_cover}, let $p:\R^3\to M^n_K$ be the universal covering of $M^n_K$. By the assumption, the inverse image $p^{-1}(\tilde{K})$ of the branching set $\tilde{K}$ of $M^n_K$ is a $1$-link in $\R^3$, that is, each connected component of $p^{-1}(\tilde{K})$ is homeomorphic to a circle, not a line. 
%By the assumption, each connected component of the inverse image $p^{-1}(\tilde{K})$ of the branching set $\tilde{K}$ of $M^n_K$ is homeomorphic to a circle. 
%We fix an element $x$ in $p^{-1}(\tilde{K})$. 
For a fixed element $x$ in $p^{-1}(\tilde{K})$, let $\widetilde{\varphi}:\R^3\to \R^3$ be a lift of the canonical covering transformation $\varphi$ of $M^n_K$ 
%$\varphi:M^n_K\to M^n_K$ 
such that $\widetilde{\varphi}(x)=x$, and $G$ a subgroup of $\langle \widetilde{\varphi}\rangle$ such that the order $|G|$ of $G$ is prime. 
We denote $\textrm{Fix}(G)$ the fixed point set of $\R^3$ by $G$. 
Since the connected component of $p^{-1}(\tilde{K})$ containing $x$ is fixed by $G$, 
the set $\textrm{Fix}(G)$ must contain a circle. 
It follows from Smith theory of group actions that $H_1(\textrm{Fix}(G);\F_{|G|})$ is trivial, where $\F_{|G|}$ is a finite field of prime order $|G|$; see \cite[Theorem~5.2 in Chapter III]{Bredon1972introduction} for example. 
%By \cite[Theorem~5.2 in Chapter III]{Bredon1972introduction}, it holds that $H_1(\textrm{Fix}(G);\F_{|G|})$ is trivial, where the order $|G|$ of $G$ is prime. 
This contradicts the fact that $\textrm{Fix}(G)$ contains a circle. 
Thus, we see that $l_K$ is nontrivial in $\pi_1(M^n_K)$.
\end{proof}

%%%%%%%%%%%%%%%%%%%%%%%%%%%%%%%%%%%%%%%%%%%%%%%%%%%%%%%%
\subsection{The case of composite knots}
\label{subsec:case_composite_knot}
%In this subsection, we discuss the case where $K$ is composite. 
We discuss the order of $l_K$ in $\pi_1(M^n_K)$ when $K$ is composite. 
Before that, we summarize when $l_K$ is trivial in $\pi_1(M^n_K)$ for an oriented prime $1$-knot $K$. 
It follows from Proposition~\ref{prop:finite}~and \ref{prop:quandle_homology_case_infinite} that we have the following: 

\begin{cor}\label{cor:trivial}
%Let $K$ be an oriented prime $1$-knot. 
%For an oriented prime $1$-knot $K$, 
The element $l_K$ for an oriented prime $1$-knot $K$ is trivial in $\pi_1(M^2_K)$ if and only if $K$ is $2$-bridge knot. 
%If an integer $n$ is greater than $2$, then 
%For an interger $n>2$,  
%For an oriented prime $1$-knot $K$, 
The element $l_K$ for $K$ is always nontrivial in $\pi_1(M^n_K)$ for each interger $n>2$. 
\end{cor}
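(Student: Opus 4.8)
The plan is to obtain both assertions by combining Proposition~\ref{prop:finite} and Proposition~\ref{prop:quandle_homology_case_infinite} through a dichotomy on whether the group $\pi_1(M^n_K)$ is finite or infinite. If it is infinite, Proposition~\ref{prop:quandle_homology_case_infinite} immediately yields that $l_K$ has infinite order, hence is nontrivial, for every $n>1$. So only the finite case needs work, and there Lemma~\ref{lemm:finite} pins $(K,n)$ down to one of the five families $(1)$--$(5)$, for each of which Proposition~\ref{prop:finite} records the order of $l_K$ explicitly.

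For the statement about $n>2$: in the finite case, $n>2$ forces $(K,n)$ into family $(4)$ or $(5)$ of Lemma~\ref{lemm:finite}, and the table of Proposition~\ref{prop:finite} then shows that the order of $l_K$ is one of $2,4,10,6$, in particular always greater than $1$; combined with the infinite case this proves $l_K$ is always nontrivial in $\pi_1(M^n_K)$ when $n>2$.

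For the equivalence at $n=2$: if $l_K$ is trivial then $\pi_1(M^2_K)$ must be finite (otherwise $l_K$ would have infinite order by the infinite case), so $(K,2)$ lies in family $(1)$, $(2)$, or $(3)$; since the table gives order $2$ or $4$ in families $(2)$ and $(3)$, we must be in family $(1)$, i.e.\ $K$ is a $2$-bridge knot. Conversely, if $K=S(\alpha,\beta)$ is a $2$-bridge knot then $M^2_K$ is a lens space, so $\pi_1(M^2_K)$ is finite, $(K,2)$ falls into family $(1)$, and the corresponding entry of the table gives order of $l_K$ equal to $1$, so $l_K$ is trivial. The whole argument is bookkeeping on top of the two preceding propositions, so there is no substantial obstacle; the only point to keep in mind is that the converse direction at $n=2$ uses the classical fact that the double branched cover of a $2$-bridge knot is a lens space (to be within the hypotheses of Proposition~\ref{prop:finite}), together with the observation that a genuine Montesinos knot of type $(2)$ or $(3)$ is not $2$-bridge, so that families $(1)$--$(3)$ are applied consistently.
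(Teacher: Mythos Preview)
Your proposal is correct and follows exactly the same approach as the paper, which simply states that the corollary follows from Proposition~\ref{prop:finite} and Proposition~\ref{prop:quandle_homology_case_infinite}; you have just spelled out the finite/infinite dichotomy and the bookkeeping in detail. Your remark that the converse at $n=2$ requires knowing $\pi_1(M^2_K)$ is finite for a $2$-bridge knot (via the lens space description) is a valid observation that the paper leaves implicit.
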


For oriented $1$-knots $K_1$ and $K_2$, let $K$ be the connected sum %$K_1\# K_2$ 
of $K_1$ and $K_2$. 
%Let $K_1$ and $K_2$ be oriented 1-knots, and let $n$ be an integer greater than $1$. We set $K:=K_1\# K_2$. 
Then we see that the fundamental group $\pi_1(M^n_K)$ is group isomorphic to the free product $\pi_1(M^n_{K_1})\ast\pi_1(M^n_{K_2})$ for each integer $n>1$. 
Moreover, by the definition of $l_K$, it holds that $l_K=l_{K_1}\cdot l_{K_2}$ in $\pi_1(M^n_K)$. Hence, we have the following: 
%In particular,  if $l_{K_1}$ and $l_{K_2}$ are nontrivial elements, the element $l_K$ is torsion-free.

\begin{lemm}\label{lemm:composite}
Let $K$ be the connected sum of oriented $1$-knots $K_1$ and $K_2$. 
%For oriented $1$-knots $K_1$ and $K_2$, let $K$ be the connected sum %$K_1\# K_2$ 
%of $K_1$ and $K_2$. Then we have the following for each integer $n>1$. 
%
%Let $K_1$ and $K_2$ be oriented nontrivial 1-knots, and let $n$ be a positive integer greater than $1$. We set $K:=K_1\# K_2$. 
%\begin{enumerate}
%\item[$(1)$] 
If $l_{K_2}$ is trivial in $\pi_1(M^n_{K_2})$, then the order of $l_K$ in $\pi_1(M^2_K)$ 
%for an oriented composite $1$-knot $K=K_1 \# K_2$ 
is equal to that of $l_{K_1}$ in $\pi_1(M^2_{K_1})$. 
%If $K_2$ is a $2$-bridge knot, then the order of $l_K$ in $\pi_1(M^2_K)$ equals the order of $l_{K_1}$ in $\pi_1(M^2_{K_1})$.
%\item[$(2)$] 
%For an interger $n>2$, 
If $l_{K_1}$ and $l_{K_2}$ are nontrivial in $\pi_1(M^n_{K_1})$ and $\pi_1(M^n_{K_2})$ respectively, then the order of $l_K$ is infinite in $\pi_1(M^n_K)$ for each integer $n>2$.
%\end{enumerate}
\end{lemm}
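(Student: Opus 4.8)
The plan is to derive both assertions from the normal form theorem for free products, invoking only the two facts recalled immediately before the statement: that $\pi_1(M^n_K)$ is group isomorphic to the free product $\pi_1(M^n_{K_1}) * \pi_1(M^n_{K_2})$ for every integer $n>1$, and that under this isomorphism $l_K$ corresponds to $l_{K_1}\cdot l_{K_2}$, each $l_{K_i}$ being regarded as an element of the $i$-th free factor via the canonical injection. In particular each free factor embeds into the free product, so the order of $l_{K_i}$ in $\pi_1(M^n_K)$ equals its order in $\pi_1(M^n_{K_i})$.

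For the first assertion I would argue as follows. If $l_{K_2}$ is trivial, then $l_K = l_{K_1}$ in $\pi_1(M^n_K)$, and by the embedding remark above the order of $l_K$ equals the order of $l_{K_1}$ in $\pi_1(M^n_{K_1})$. (Applying the embedding remark to the prime decomposition of $K_2$ together with Corollary~\ref{cor:trivial} shows that, for nontrivial $K_2$, the vanishing of $l_{K_2}$ forces $n=2$ and every prime summand of $K_2$ to be a $2$-bridge knot, which is why it is harmless to write $M^2_K$ and $M^2_{K_1}$ in the conclusion.)

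For the second assertion, assume $l_{K_1}$ and $l_{K_2}$ are both nontrivial, so that they are nonidentity elements of their respective free factors. Then for every positive integer $k$ the power $(l_{K_1}l_{K_2})^{k} = l_{K_1}\,l_{K_2}\,l_{K_1}\,l_{K_2}\cdots l_{K_1}\,l_{K_2}$ is an alternating word of $2k$ nontrivial syllables drawn in turn from the two factors, hence is already reduced and therefore nontrivial by the normal form theorem for free products. Thus $l_K = l_{K_1}l_{K_2}$ has infinite order in $\pi_1(M^n_K)$, and the hypothesis $n>2$ enters only as the regime in which the nontriviality of $l_{K_1}$ and $l_{K_2}$ is the natural assumption, by Corollary~\ref{cor:trivial}.

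I do not expect a genuine obstacle here: once the free-product description of $\pi_1(M^n_K)$ and the identity $l_K = l_{K_1}l_{K_2}$ are granted, the rest is two applications of the normal form theorem --- once for injectivity of the free factors, once to see that an alternating reduced word is never the identity. The only step that deserves an explicit sentence is the observation that ``nontrivial'' in the second hypothesis is exactly what makes each syllable $l_{K_i}$ a nonidentity element of its factor, so that the alternating word in question is genuinely reduced.
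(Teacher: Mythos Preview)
Your proposal is correct and follows exactly the route the paper intends: the paper does not give an explicit proof of this lemma but simply writes ``Hence, we have the following'' after recording that $\pi_1(M^n_K)\cong\pi_1(M^n_{K_1})*\pi_1(M^n_{K_2})$ and $l_K=l_{K_1}\cdot l_{K_2}$, treating both assertions as immediate consequences of the normal form theorem for free products. Your write-up is just a fleshed-out version of that implicit argument, including the helpful remark (via Corollary~\ref{cor:trivial}) explaining why the conclusion of the first assertion is stated for $n=2$.
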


Combining Lemma~\ref{lemm:composite} with Corollary~\ref{cor:trivial}, we have the following: 

\begin{prop}\label{prop:composite}
Let $K$ be an oriented composite $1$-knot. 
If $K$ is equivalent to an oriented $1$-knot $K'$ up to $2$-bridge knot summands, then the order of $l_K$ in $\pi_1(M^2_K)$ is equal to that of $l_{K'}$ in $\pi_1(M^2_{K'})$. 
If an integer $n$ is greater than $2$, then the order of $l_K$ is always infinite in $\pi_1(M^n_K)$. 
\end{prop}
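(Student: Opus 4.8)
The plan is to deduce Proposition~\ref{prop:composite} directly from Lemma~\ref{lemm:composite} and Corollary~\ref{cor:trivial}, by a straightforward induction on the number of prime summands that are \emph{not} $2$-bridge knots. First I would recall the prime decomposition $K = K_1 \# \cdots \# K_r$ of the oriented composite $1$-knot $K$, so that $\pi_1(M^n_K) \cong \pi_1(M^n_{K_1}) \ast \cdots \ast \pi_1(M^n_{K_r})$ and $l_K = l_{K_1} \cdots l_{K_r}$ under this isomorphism, as noted in the paragraph preceding Lemma~\ref{lemm:composite}. I would then treat the two cases of the proposition separately, as they use the two halves of Lemma~\ref{lemm:composite} respectively.

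For the case $n=2$: by definition, $K$ being equivalent to $K'$ up to $2$-bridge knot summands means that, after discarding the prime summands of $K$ that are $2$-bridge knots, the remaining connected sum is $K'$ (up to equivalence). By Corollary~\ref{cor:trivial}, each discarded $2$-bridge summand $K_j$ has $l_{K_j}$ trivial in $\pi_1(M^2_{K_j})$. Applying the first part of Lemma~\ref{lemm:composite} once for each such summand — peeling them off one at a time and using that $K$ is the connected sum of $K'$ with that summand — gives that the order of $l_K$ in $\pi_1(M^2_K)$ equals the order of $l_{K'}$ in $\pi_1(M^2_{K'})$. (A minor point to address: if \emph{every} prime summand is a $2$-bridge knot, then $K'$ is the unknot and $l_{K'}$ is trivial; this is consistent, and one should allow $K'$ to be trivial in the statement.)

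For the case $n>2$: since $K$ is composite, write $K = K_1 \# K_2$ with $K_1, K_2$ nontrivial (grouping the prime factors into two nonempty blocks). By the second sentence of Corollary~\ref{cor:trivial}, $l_{K_1}$ is nontrivial in $\pi_1(M^n_{K_1})$ and $l_{K_2}$ is nontrivial in $\pi_1(M^n_{K_2})$ for every $n>2$. The second part of Lemma~\ref{lemm:composite} then immediately yields that the order of $l_K$ is infinite in $\pi_1(M^n_K)$.

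I do not expect a serious obstacle here: the proposition is essentially a bookkeeping consequence of the two preceding results. The only mild subtlety worth making explicit is the reduction to the two-summand form in the $n=2$ case — one must iterate Lemma~\ref{lemm:composite} and check that at each stage $K$ really is presented as a connected sum of a $2$-bridge knot with the knot obtained by deleting that summand, which is immediate from the commutativity and associativity of connected sum — together with handling the degenerate case where the reduced knot $K'$ is the unknot. So the write-up will be short: set up the prime decomposition, invoke Corollary~\ref{cor:trivial} to identify which summands have trivial longitude, and apply the appropriate half of Lemma~\ref{lemm:composite}.
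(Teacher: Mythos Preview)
Your approach matches the paper's, which simply states that the proposition follows by combining Lemma~\ref{lemm:composite} with Corollary~\ref{cor:trivial} and gives no further details. Your write-up for the $n=2$ case is fine.

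There is one small gap in your $n>2$ argument: Corollary~\ref{cor:trivial} is stated only for \emph{prime} knots, so after grouping the prime factors into two nonempty blocks $K_1$ and $K_2$ you cannot invoke it directly on a block that happens to be composite. The fix is easy and you already have the ingredients on the table: either take $K_1$ to be a single prime summand and handle $K_2$ by induction on the number of prime factors, or bypass Lemma~\ref{lemm:composite} entirely and observe that in the free product $\pi_1(M^n_{K_1})\ast\cdots\ast\pi_1(M^n_{K_r})$ the element $l_K=l_{K_1}\cdots l_{K_r}$ is a reduced word of syllable length $r\geq 2$ (each $l_{K_i}$ nontrivial by Corollary~\ref{cor:trivial} applied to the prime $K_i$), and hence has infinite order.
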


%====================================
\begin{comment}
By above discussions, we have the following.
\begin{theo}
\label{theo:quandle_homlogy_finite}
Let $K$ be a nontrivial oriented 1-knot and $n$ an integer greater than 1. The second quandle homology group $H^Q_2(Q_n(K))$ is finite if and only if one of the following holds:
{\setlength{\leftmargini}{12pt}  
\begin{itemize}
\item $n=2$ and $K=\#^\mu S(\alpha_i,\beta_i)$, where $S(\alpha_i,\beta_i)$ is the 2-bridge knot of type $(\alpha_i,\beta_i)$.
\item $n=2$ and $K=M(b;(2,1),(3,\beta_2),(3,\beta_3))$.
\item $n=2$ and $K=M(b;(2,1),(3,\beta_2),(5,\beta_3))$.
\item $n=2$ and $K=M(b;(2,1),(3,\beta_2),(\alpha_3,\beta_3))\# K^\prime$, where $\alpha_i=3,5$ and $K^\prime$ is a connected sums of 2-bridge knots.
\item $n=3,4,5$ and $K=T_{2,3}$.
\item $n=3$ and $K=T_{2,5}$.
\end{itemize}
}
If the pair $(n,K)$ does not satisfy the above conditions, the second quandle homology group $H^Q_2(Q_n(K))$ is $\Z$.
\end{theo}
\end{comment}
%=====================================

%%%%%%%%%%%%%%%%%%%%%%%%%%%%%%%%%%%%%%%%%%%%%%%%%%%%%%%%
\subsection{Second quandle homology groups}\label{subsect:compute}
%\subsection{Characterization of some knots}

%Proposition~\ref{prop:finite}
%Proposition~\ref{prop:quandle_homology_case_infinite}
%Proposition~\ref{prop:composite}

Summarizing the results (Proposition~\ref{prop:finite}, ~\ref{prop:quandle_homology_case_infinite} and ~\ref{prop:composite}), 
%in Subsection~\ref{subsec:case_prime_knot} and~\ref{subsec:case_composite_knot}, 
we can compute $H^Q_2(Q_n(K))$ completely as follows. 

\begin{theo}\label{theo:2-quandle} 
The following hold for the knot $2$-quandles. 
%{\setlength{\leftmargini}{12pt}  
\begin{itemize}
\item If an oriented $1$-knot $K$ is equivalent to 
the unknot up to $2$-bridge knot summands, then $H^Q_2(Q_2(K))$ is trivial.
\item If $K$ is equivalent to 
a Montesinos knot $M(b;\frac{1}{2},\frac{\beta_2}{3},\frac{\beta_3}{3})$ up to $2$-bridge knot summands, then $H^Q_2(Q_2(K))$ is group isomorphic to $\Z/2\Z$.
\item If $K$ is equivalent to 
a Montesinos knot $M(b;\frac{1}{2},\frac{\beta_2}{3},\frac{\beta_3}{5})$ up to $2$-bridge knot summands, then $H^Q_2(Q_2(K))$ is group isomorphic to $\Z/4\Z$.
\item Otherwise, it holds that $H^Q_2(Q_2(K))$ is group isomorphic to $\Z$. 
%the infinite cyclic group.
\end{itemize}
%}
\end{theo}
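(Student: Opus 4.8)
The plan is to reduce the whole statement to a single numerical quantity, namely the order of $l_K$ in $\pi_1(M^2_K)$, and then read off the group. By Corollary~\ref{cor:homology_n-quandle}, $H^Q_2(Q_2(K))$ is the cyclic group generated by (the class of) $l_K$; hence it is trivial, $\Z/2\Z$, $\Z/4\Z$, or $\Z$ precisely according as the order of $l_K$ in $\pi_1(M^2_K)$ is $1$, $2$, $4$, or infinite. So everything comes down to showing that these four orders occur exactly in the four cases listed.

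I would first note that for the unknot $\pi_1(M^2_K)$ is trivial, so $H^Q_2(Q_2(K))=0$, and then dispose of the prime case. If $K$ is prime with $\pi_1(M^2_K)$ infinite, Proposition~\ref{prop:quandle_homology_case_infinite} gives that $l_K$ has infinite order, so $H^Q_2(Q_2(K))\cong\Z$. If $K$ is prime with $\pi_1(M^2_K)$ finite, then Lemma~\ref{lemm:finite} specialized to $n=2$ forces $K$ to be a $2$-bridge knot, a Montesinos knot $M(b;\tfrac12,\tfrac{\beta_2}{3},\tfrac{\beta_3}{3})$, or a Montesinos knot $M(b;\tfrac12,\tfrac{\beta_2}{3},\tfrac{\beta_3}{5})$ (items $(4)$ and $(5)$ of that lemma are vacuous for $n=2$), and Proposition~\ref{prop:finite} gives that the order of $l_K$ is $1$, $2$, or $4$ respectively. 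By Corollary~\ref{cor:trivial} the two Montesinos families occurring here are genuinely non-$2$-bridge, so the first three subcases do not overlap. This settles the theorem for prime $K$, the $2$-bridge case landing in the first bullet.

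Next I would treat composite $K$. Using Schubert's uniqueness of the prime decomposition $K=J_1\#\cdots\#J_k$, grouping the $2$-bridge prime summands shows that $K$ is equivalent up to $2$-bridge knot summands to a well-defined knot $K'$ that is either the unknot or a connected sum of non-$2$-bridge prime knots. By Proposition~\ref{prop:composite} the order of $l_K$ in $\pi_1(M^2_K)$ equals that of $l_{K'}$ in $\pi_1(M^2_{K'})$, so it remains to compute the latter. If $K'$ is the unknot, then $\pi_1(M^2_{K'})=\pi_1(S^3)$ is trivial and $l_{K'}$ has order $1$ (first bullet). If $K'$ is a single non-$2$-bridge prime, the prime analysis above applies to $K'$: either $K'$ is one of the two Montesinos families, of longitude order $2$ or $4$ (second or third bullet), or $\pi_1(M^2_{K'})$ is infinite and $l_{K'}$ has infinite order (last bullet). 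Finally, if $K'$ is a connected sum of at least two non-$2$-bridge primes, say $K'=J_{i_1}\#\cdots\#J_{i_r}$ with $r\geq2$, then $\pi_1(M^2_{K'})\cong\pi_1(M^2_{J_{i_1}})\ast\cdots\ast\pi_1(M^2_{J_{i_r}})$ and, by Corollary~\ref{cor:trivial}, $l_{K'}=l_{J_{i_1}}\cdots l_{J_{i_r}}$ is a product of nontrivial elements lying in distinct free factors; such an element is cyclically reduced of syllable length $r\geq2$ in a free product, hence has infinite order, so $H^Q_2(Q_2(K))\cong\Z$. Collecting the cases and noting that they are exhaustive and pairwise disjoint — because the $2$-bridge-reduced representative $K'$ is determined by $K$, and the unknot, $M(b;\tfrac12,\tfrac{\beta_2}{3},\tfrac{\beta_3}{3})$, and $M(b;\tfrac12,\tfrac{\beta_2}{3},\tfrac{\beta_3}{5})$ are pairwise distinct by the order computations — yields the four bullets.

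The computations are all packaged in the earlier propositions, so there is no serious analytic obstacle here; the only delicate point is the bookkeeping for composite knots. One must verify that stripping $2$-bridge summands produces a well-defined knot (this is exactly where uniqueness of the prime decomposition enters) and that the leftover $K'$, which may still be composite, has its longitude order pinned down — the subcase of $\geq 2$ non-$2$-bridge prime factors requiring the elementary free-product torsion argument above, since the infinite-order clause of Lemma~\ref{lemm:composite} is stated only for $n>2$.
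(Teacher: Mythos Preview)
Your proof is correct and follows the same route as the paper, which simply states that the theorem is obtained by ``summarizing the results'' of Proposition~\ref{prop:finite}, Proposition~\ref{prop:quandle_homology_case_infinite} and Proposition~\ref{prop:composite}. You are in fact more careful than the paper on one point: when the $2$-bridge-reduced knot $K'$ still has at least two prime summands, the infinite-order conclusion is not literally contained in Proposition~\ref{prop:composite} or in the second clause of Lemma~\ref{lemm:composite} (which is stated only for $n>2$), and you correctly supply the elementary free-product argument that a cyclically reduced word of syllable length $\geq 2$ has infinite order --- the same argument that underlies Lemma~\ref{lemm:composite} and works verbatim for $n=2$.
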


\begin{theo}\label{theo:3-quandle}  
The following hold for the knot $3$-quandles.
\begin{itemize}
\item If an oriented $1$-knot $K$ is %equivalent to 
the unknot, then $H^Q_2(Q_3(K))$ is trivial.
\item If $K$ is %equivalent to 
the trefoil, then $H^Q_2(Q_3(K))$ is group isomorphic to $\Z/2\Z$.
\item If $K$ is %equivalent to 
the cinquefoil, then $H^Q_2(Q_3(K))$ is group isomorphic to $\Z/6\Z$.
\item Otherwise, it holds that $H^Q_2(Q_3(K))$ is group isomorphic to $\Z$. 
\end{itemize}
\end{theo}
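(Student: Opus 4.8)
The plan is to read the theorem off as an assembly of results already established in Section~\ref{sect:quandle_homology}. By Corollary~\ref{cor:homology_n-quandle}, $H^Q_2(Q_3(K))$ is group isomorphic to the cyclic group $A^3_K$ generated by the preferred longitude $l_K$ inside $\pi_1(M^3_K)$, so the entire task reduces to determining the order of $l_K$ in $\pi_1(M^3_K)$ for every oriented $1$-knot $K$, and then translating the possible orders $1$, $2$, $6$, and $\infty$ into the groups $0$, $\Z/2\Z$, $\Z/6\Z$, and $\Z$ respectively. I would organize the case analysis along the prime decomposition of $K$: the unknot, a nontrivial prime knot, or a composite knot.

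First I would dispose of the unknot: here $M^3_K$ is $S^3$, so $\pi_1(M^3_K)$ is trivial, $l_K$ has order $1$, and $H^Q_2(Q_3(K))$ is trivial. Next, for a nontrivial prime knot $K$ I would split according to whether $\pi_1(M^3_K)$ is finite or infinite. If it is infinite, Proposition~\ref{prop:quandle_homology_case_infinite} gives that $l_K$ is non-torsion, so $A^3_K \cong \Z$ and $H^Q_2(Q_3(K)) \cong \Z$. If it is finite, I would invoke Lemma~\ref{lemm:finite}: among its five families, only case $(4)$ with $n=3$ and case $(5)$ with $n=3$ are compatible with $n=3$, since cases $(1)$, $(2)$, $(3)$ all force $n=2$; hence $K$ is the trefoil $3_1$ or the cinquefoil $5_1$. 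Then the table in Proposition~\ref{prop:finite} reads off the order of $l_K$ as $2$ for $K=3_1$ and $6$ for $K=5_1$, yielding $H^Q_2(Q_3(3_1)) \cong \Z/2\Z$ and $H^Q_2(Q_3(5_1)) \cong \Z/6\Z$.

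Finally, for a composite knot $K$ I would apply Proposition~\ref{prop:composite} with $n=3>2$, which states that the order of $l_K$ in $\pi_1(M^3_K)$ is always infinite, so $H^Q_2(Q_3(K)) \cong \Z$. Collecting the four cases—unknot, trefoil, cinquefoil, and everything else—gives exactly the four bullet points.

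Since all the geometric and homological input is already packaged into the cited results, there is no genuine obstacle here; the only point requiring a moment's care is verifying that the classification of Lemma~\ref{lemm:finite}, once restricted to $n=3$, leaves precisely the trefoil and the cinquefoil, so that the ``finite'' branch of the prime case contributes only those two knots and the ``otherwise'' bullet (value $\Z$) correctly absorbs all remaining prime knots together with every composite knot.
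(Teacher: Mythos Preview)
Your proposal is correct and follows exactly the approach the paper itself uses: the paper simply states that Theorems~\ref{theo:2-quandle}--\ref{theo:6-quandle} are obtained by ``summarizing the results (Proposition~\ref{prop:finite},~\ref{prop:quandle_homology_case_infinite} and~\ref{prop:composite}),'' and you have written out that summary in detail for $n=3$, correctly isolating the trefoil and cinquefoil from Lemma~\ref{lemm:finite} as the only prime knots with finite $\pi_1(M^3_K)$.
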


\begin{theo}\label{theo:4-quandle}  
The following hold for the knot $4$-quandles. 
\begin{itemize}
\item If an oriented $1$-knot $K$ is %equivalent to 
the unknot, then $H^Q_2(Q_4(K))$ is trivial.
\item If $K$ is %equivalent to 
the trefoil, then $H^Q_2(Q_4(K))$ is group isomorphic to $\Z/4\Z$.
\item Otherwise, it holds that $H^Q_2(Q_4(K))$ is group isomorphic to $\Z$. 
\end{itemize}
\end{theo}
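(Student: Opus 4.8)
The plan is to prove Theorem~\ref{theo:4-quandle} as an immediate corollary of the structural results already in hand, reducing everything to a computation of the order of $l_K$ in $\pi_1(M^4_K)$. By Corollary~\ref{cor:homology_n-quandle}, $H^Q_2(Q_4(K))$ is group isomorphic to the cyclic group $A^4_K$, which is generated by the preferred longitude $l_K$ viewed as an element of $\pi_1(M^4_K)$; hence the whole statement follows once the order of $l_K$ is pinned down in each of the three cases.

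First I would treat the unknot. Here $M^4_K = S^3$, so $\pi_1(M^4_K)$ is trivial, $l_K = 1$, and $A^4_K$ is the trivial group, giving $H^Q_2(Q_4(K)) = 0$. (Alternatively, for the unknot $Q(\tau^4 K) \to Q_4(K)$ is already an isomorphism, so the extension group is trivial.) Next I would treat the trefoil $3_1$. This is case $(4)$ of Lemma~\ref{lemm:finite} with $n=4$, so $\pi_1(M^4_{3_1})$ is finite, and Proposition~\ref{prop:finite} records that the order of $l_{3_1}$ in $\pi_1(M^4_{3_1})$ equals $4$; therefore $A^4_{3_1} \cong \Z/4\Z$ and $H^Q_2(Q_4(3_1)) \cong \Z/4\Z$. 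The value $4$ in Proposition~\ref{prop:finite} rests on Inoue's computation in \cite{Inoue2021knot} together with Lemma~\ref{lemm:Schlafli} identifying $X_4$ with $Q_4(3_1)$, which I would invoke as a black box.

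Finally I would treat all remaining oriented $1$-knots $K$, showing $H^Q_2(Q_4(K)) \cong \Z$, equivalently that $l_K$ has infinite order in $\pi_1(M^4_K)$. Since $K$ is neither the unknot nor the trefoil, write $K$ as a connected sum of prime knots. If $K$ is prime, then by Lemma~\ref{lemm:finite} the only prime knots $K$ with $\pi_1(M^4_K)$ finite are $K = 3_1$, which is excluded; hence $\pi_1(M^4_K)$ is infinite, and Proposition~\ref{prop:quandle_homology_case_infinite} gives that $l_K$ has infinite order. If $K$ is composite, then Proposition~\ref{prop:composite} applies directly (with $n=4>2$): the order of $l_K$ is always infinite in $\pi_1(M^4_K)$. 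Either way $A^4_K \cong \Z$, completing the proof. The only genuinely delicate point is the prime-and-infinite case, which is already handled by Proposition~\ref{prop:quandle_homology_case_infinite} via Lemma~\ref{lemm:universal_cover_of_branched_cover} and Smith theory; at the level of Theorem~\ref{theo:4-quandle} itself there is no real obstacle, only the bookkeeping of exhausting the cases of Lemma~\ref{lemm:finite} at $n=4$ to see that $3_1$ is the sole exception.
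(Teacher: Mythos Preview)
Your proposal is correct and follows essentially the same approach as the paper, which simply states that Theorems~\ref{theo:2-quandle}--\ref{theo:6-quandle} are obtained by summarizing Propositions~\ref{prop:finite}, \ref{prop:quandle_homology_case_infinite} and~\ref{prop:composite} via Corollary~\ref{cor:homology_n-quandle}. One minor remark: the value $4$ for the order of $l_{3_1}$ in $\pi_1(M^4_{3_1})$ is cited in the proof of Proposition~\ref{prop:finite} directly from \cite{Inoue2021knot} (as a statement about $\pi_1(M^n_{3_1})$), not via Lemma~\ref{lemm:Schlafli}, so that lemma is not needed here.
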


\begin{theo}\label{theo:5-quandle}  
The following hold for the knot $5$-quandles. 
\begin{itemize}
\item If an oriented $1$-knot $K$ is %equivalent to 
the unknot, then $H^Q_2(Q_5(K))$ is trivial.
\item If $K$ is %equivalent to 
the trefoil, then $H^Q_2(Q_5(K))$ is group isomorphic to $\Z/10\Z$.
\item Otherwise, it holds that $H^Q_2(Q_5(K))$ is group isomorphic to $\Z$. 
\end{itemize}
\end{theo}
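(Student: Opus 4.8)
The plan is to reduce the statement, via Corollary~\ref{cor:homology_n-quandle}, to a computation of the order of the preferred longitude $l_K$ in $\pi_1(M^5_K)$: that corollary identifies $H^Q_2(Q_5(K))$ with the cyclic subgroup $A^5_K=\langle l_K\rangle$ of $\pi_1(M^5_K)$, so it suffices to treat three cases, namely $K$ the unknot, $K$ the trefoil, and $K$ neither.

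If $K$ is the unknot, then $M^5_K$ is $S^3$, so $\pi_1(M^5_K)$ is trivial, hence $A^5_K$ is trivial and $H^Q_2(Q_5(K))$ is trivial. If $K$ is the trefoil, then $\pi_1(M^5_{3_1})$ is finite and the pair $(3_1,5)$ falls under case $(4)$ of Lemma~\ref{lemm:finite}, so Proposition~\ref{prop:finite} gives that $l_{3_1}$ has order $10$; hence $A^5_{3_1}$ is group isomorphic to $\Z/10\Z$.

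For the remaining case I would split according to whether $K$ is prime or composite. If $K$ is a nontrivial prime knot different from the trefoil, I claim $\pi_1(M^5_K)$ is infinite: otherwise Lemma~\ref{lemm:finite} would place $(K,5)$ in one of its five cases, but cases $(1)$--$(3)$ require $n=2$, case $(5)$ requires $n=3$, and case $(4)$ at $n=5$ is the trefoil, all of which are excluded. So $\pi_1(M^5_K)$ is infinite, and Proposition~\ref{prop:quandle_homology_case_infinite} shows $l_K$ has infinite order, whence $A^5_K$ is group isomorphic to $\Z$. If instead $K$ is composite, then since $5>2$, Proposition~\ref{prop:composite} directly gives that $l_K$ has infinite order, so again $A^5_K$ is group isomorphic to $\Z$. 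In either subcase $H^Q_2(Q_5(K))$ is group isomorphic to $\Z$, which finishes the proof.

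The argument is essentially a careful traversal of the previously established propositions, so there is no substantive obstacle. The only point deserving attention is the appeal to Lemma~\ref{lemm:finite} (and hence to Dunbar's classification of spherical orbifolds) to certify that at $n=5$ the only prime knot with finite branched-cover fundamental group is the trefoil; this is exactly what guarantees that every other prime knot lands in the infinite-group regime handled by Proposition~\ref{prop:quandle_homology_case_infinite}.
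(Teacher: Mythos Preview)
Your proposal is correct and follows essentially the same approach as the paper: the paper proves Theorems~\ref{theo:2-quandle}--\ref{theo:6-quandle} simultaneously by the sentence ``Summarizing the results (Proposition~\ref{prop:finite}, \ref{prop:quandle_homology_case_infinite} and \ref{prop:composite}), we can compute $H^Q_2(Q_n(K))$ completely,'' and your argument is simply the explicit unpacking of that summary for $n=5$ via Corollary~\ref{cor:homology_n-quandle}.
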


\begin{theo}\label{theo:6-quandle} 
The following hold for the knot $n$-quandles for each $n>5$. 
\begin{itemize}
\item If an oriented $1$-knot $K$ is %equivalent to 
the unknot, then $H^Q_2(Q_n(K))$ is trivial.
\item Otherwise, it holds that $H^Q_2(Q_n(K))$ is group isomorphic to $\Z$. 
\end{itemize}
\end{theo}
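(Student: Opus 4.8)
The plan is to reduce the statement to the order of the preferred longitude $l_K$ in $\pi_1(M^n_K)$ and then read off the answer from the results already established in this section. By Corollary~\ref{cor:homology_n-quandle} we have $H^Q_2(Q_n(K))\cong A^n_K$, where $A^n_K=\langle l_K\rangle\leq\pi_1(M^n_K)$, so it suffices to prove that $l_K$ is trivial when $K$ is the unknot and of infinite order for every nontrivial $K$, under the standing hypothesis $n>5$.

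First I would dispose of the unknot: if $K$ is trivial, then $M^n_K=S^3$, so $\pi_1(M^n_K)$ is trivial, hence $A^n_K$ is trivial and $H^Q_2(Q_n(K))=0$. For a nontrivial $K$ I would split into the prime and composite cases, which together are exhaustive. If $K$ is prime, then Lemma~\ref{lemm:finite} shows that $\pi_1(M^n_K)$ can be finite only in the cases listed there, each of which forces $n\in\{2,3,4,5\}$; since $n>5$, the group $\pi_1(M^n_K)$ is infinite, and Proposition~\ref{prop:quandle_homology_case_infinite} then yields that $l_K$ is not a torsion element, so $A^n_K\cong\Z$. If instead $K$ is composite, Proposition~\ref{prop:composite} already states that the order of $l_K$ in $\pi_1(M^n_K)$ is infinite for every $n>2$, so again $A^n_K\cong\Z$.

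Combining the two nontrivial cases, $H^Q_2(Q_n(K))\cong A^n_K\cong\Z$ whenever $K$ is not the unknot, which completes the proof. There is no genuine obstacle here: the theorem is an assembly of Corollary~\ref{cor:homology_n-quandle}, Lemma~\ref{lemm:finite}, Proposition~\ref{prop:quandle_homology_case_infinite} and Proposition~\ref{prop:composite}. The only point that warrants a word of care is observing that every finiteness possibility in Lemma~\ref{lemm:finite} is excluded by the hypothesis $n>5$, so that the dichotomy ``$\pi_1(M^n_K)$ finite'' versus ``infinite'' collapses entirely into the infinite case for prime $K$.
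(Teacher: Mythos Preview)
Your proof is correct and follows essentially the same approach as the paper: the theorem is stated there as a direct summary of Proposition~\ref{prop:finite}, Proposition~\ref{prop:quandle_homology_case_infinite} and Proposition~\ref{prop:composite}, combined via Corollary~\ref{cor:homology_n-quandle}, exactly as you have unpacked it.
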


%%%%%%%%%%%%%%%%%%%%%%%%%%%%%%%%%%%%%%%%%%%%%%%%%%%%%%%%
\subsection{Characterizations of some knots}\label{subsect:detect}
%In \cite{Eisermann2003unknot}, 
Eisermann \cite{Eisermann2003unknot} showed that the second quandle homology group of the knot quandle characterizes the unknot. As a consequence of our results (Theorem~\ref{theo:2-quandle}, \ref{theo:3-quandle}, \ref{theo:4-quandle}, \ref{theo:5-quandle} and \ref{theo:6-quandle}), an analogous result 
%to \cite{Eisermann2003unknot} 
holds for the knot $n$-quandle when $n>2$. 
%$n$ is greater than $2$:
%As a consequence of our results, if $n$ is an integer greater than $2$, then an analogous result to \cite{Eisermann2003unknot} also holds for the knot $n$-quandle:
%
Moreover, we can characterize some nontrivial $1$-knots for $n=3,4$ and $5$.
%we see that the second quandle homology group of the knot $3$-quandle characterizes the trefoil and the cinquefoil, and the second quandle homoogy groups of the knot $4$-quandle and the knot $5$-quandle characterizes the trefoil.

%%%%%%%%%%%%%%%%%%%%%%%%%%%%%%%%%%%%%%%%%%%%%%%%%%%%%%%%
\begin{theo}\label{theo:detect}
The second quandle homology group of the knot $n$-quandle characterizes 
%The group $H^Q_2(Q_n(K))$ characterizes 
%\begin{enumerate}
%\item[$(1)$]
$(1)$ the unknot, the trefoil and the cinquefoil for $n=3$, 
%It characterizes the unknot, the trefoil and the cinquefoil for $n=3$. 
%If $n=3$, then it characterizes the unknot, the trefoil and the cinquefoil. 
%\item[$(2)$]
$(2)$ the unknot and the trefoil for $n=4$ or $5$,  
%It characterizes the unknot and the trefoil for $n=4$ or $5$. 
%If $n=4$ or $5$, then it characterizes the unknot and the trefoil. 
%\item[$(3)$]
$(3)$ the unknot for $n>5$, and  
%It characterizes the unknot for $n>5$. 
%If $n>5$, then it characterizes the unknot. 
%\item[$(4)$]
$(4)$ the unknot up to $2$-bridge knot summands for $n=2$. 
%It characterizes the unknot among all prime knots for $n=2$. 
%If $n=2$, then it characterizes the unknot among all prime knots. 
%\end{enumerate}
\end{theo}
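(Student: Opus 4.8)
The plan is to deduce Theorem~\ref{theo:detect} directly from the five explicit computations of $H^Q_2(Q_n(K))$ (Theorems~\ref{theo:2-quandle}--\ref{theo:6-quandle}), so that essentially no new mathematics is needed; the work is in carefully reading off what ``characterizes'' means in each case. Recall that a quandle invariant $I$ \emph{characterizes} a family $\mathcal{F}$ of knots if, whenever $I(K)=I(K')$ and $K'\in\mathcal{F}$, then $K$ is equivalent to $K'$ (equivalently, the values $I$ takes on $\mathcal{F}$ are pairwise distinct and are not attained by any knot outside $\mathcal{F}$). Since $H^Q_2(Q_n(K))$ is always one of the abelian groups $0$, $\Z/d\Z$ for some $d$, or $\Z$, comparing two such groups is just comparing isomorphism types, and I would spell this out once at the start of the proof.

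First I would treat $n=3$. By Theorem~\ref{theo:3-quandle}, the value $0$ is attained exactly by the unknot, the value $\Z/2\Z$ exactly by the trefoil, and the value $\Z/6\Z$ exactly by the cinquefoil, while every other oriented $1$-knot gives $\Z$. These four groups $0,\Z/2\Z,\Z/6\Z,\Z$ are pairwise non-isomorphic, and each of the three ``special'' values singles out a unique knot; hence $H^Q_2(Q_3(-))$ characterizes $\{\text{unknot},\text{trefoil},\text{cinquefoil}\}$, which is statement~$(1)$. Statements~$(2)$ and~$(3)$ are identical bookkeeping with Theorems~\ref{theo:4-quandle}, \ref{theo:5-quandle}, \ref{theo:6-quandle}: for $n=4$ the distinguished values are $0$ (unknot) and $\Z/4\Z$ (trefoil); for $n=5$ they are $0$ (unknot) and $\Z/10\Z$ (trefoil); for $n>5$ only $0$ (unknot); in all cases every other knot yields $\Z$, and the relevant groups are mutually non-isomorphic, giving the claimed characterizations.

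For statement~$(4)$, the argument is the same in spirit but the conclusion is weaker because Theorem~\ref{theo:2-quandle} only determines $H^Q_2(Q_2(K))$ up to $2$-bridge knot summands. Concretely, $H^Q_2(Q_2(K))=0$ precisely when $K$ is equivalent to the unknot up to $2$-bridge knot summands, so if $H^Q_2(Q_2(K))=0=H^Q_2(Q_2(\text{unknot}))$ then $K$ is equivalent to the unknot up to $2$-bridge knot summands; this is exactly the assertion that $H^Q_2(Q_2(-))$ characterizes ``the unknot up to $2$-bridge knot summands.'' I would also remark, to make the phrasing of~$(4)$ honest, that the other values of $H^Q_2(Q_2(K))$ ($\Z/2\Z$, $\Z/4\Z$, $\Z$) do \emph{not} pin down $K$ even up to $2$-bridge summands (e.g.\ infinitely many distinct Montesinos knots share the value $\Z/2\Z$), which is why no stronger characterization is claimed for $n=2$.

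The only ``obstacle'' is really a matter of exposition rather than proof: one must be precise that characterizing a family of knots is a statement about the fibres of the invariant, and that Theorems~\ref{theo:2-quandle}--\ref{theo:6-quandle} describe exactly those fibres. I expect the proof to consist of one short paragraph recalling the notion of characterization, followed by a line or two per case citing the appropriate theorem and observing that the listed groups are pairwise non-isomorphic; there are no estimates or constructions to carry out.
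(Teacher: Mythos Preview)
Your proposal is correct and matches the paper's own approach: the paper states Theorem~\ref{theo:detect} simply as a consequence of Theorems~\ref{theo:2-quandle}--\ref{theo:6-quandle} without giving any additional argument, and your reading-off of the fibres of the invariant is exactly the intended reasoning.
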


%%%%%%%%%%%%%%%%%%%%%%%%%%%%%%%%%%%%%%%%%%%%%%%%%%%%%%%%
\begin{comment}
\begin{cor}
Let $n$ be an integer greater than $2$. Then the second quandle homology group $H^Q_2(Q_n(K))$ characterizes the unknot, that is, the group $H^Q_2(Q_n(K))$ is trivial if and only if $K$ is the unknot. 
\end{cor}
Moreover, we see that the second quandle homology group of the knot $3$-quandle characterizes the trefoil and the cinquefoil, and the second quandle homoogy groups of the knot $4$-quandle and the knot $5$-quandle characterizes the trefoil.
\begin{cor}
The following hold:
%{\setlength{\leftmargini}{12pt}  
\begin{itemize}
\item The group $H^Q_2(Q_n(K))$ characterizes the trefoil when $n=3,4$ and $5$.
\item The group $H^Q_2(Q_3(K))$ characterizes the cinquefoil.
\end{itemize}
%}
\end{cor}
\end{comment}
%%%%%%%%%%%%%%%%%%%%%%%%%%%%%%%%%%%%%%%%%%%%%%%%%%%%%%%%

%%%%%%%%%%%%%%%%%%%%%%%%%%%%%%%%%%%%%%%%%%%%%%%%%%%%%%%%
\section{Related results}\label{sect:topics}

In the final section, we collect the three related results: a relation of our extension in Theorem~\ref{theo:central_extension} with the extension in \cite{Inoue2021knot}, the types of the knot $n$-quandles, and the cardinalities of the finite knot $n$-quandles.   

%%%%%%%%%%%%%%%%%%%%%%%%%%%%%%%%%%%%%%%%%%%%%%%%%%%%%%%%
%\subsection{Answer for the question in \cite{Inoue2021knot}}
\subsection{Relation with Inoue's result}
\label{appendix:Inoue_result}
%In \cite{Inoue2021knot}, 
Inoue \cite{Inoue2021knot} introduced the notion of the Schl\"{a}fli quandles and showed that the knot quandle $Q(\tau^n 3_1)$ of the $n$-twist spun trefoil is a central extension of the Schl\"{a}fli quandle $\{3, n\}$ by a cyclic group $A_n$ for each $n>1$. 
He also computed the cyclic group $A_n$ for $n \leq 5$ and asked what $A_n$ is for $n>5$. 
%In this subsection, 
We confirm that the central extension in Theorem~\ref{theo:central_extension} can be considered as a generalization of his extension, and also give an answer to his question. 

%At first, w
We review the definition of the Schl\"{a}fli quandle. 
Let us consider the regular tessellation $\{ 3,n\}$ (of the spherical $2$-space for $2 \leq n \leq 5$, the Euclidean $2$-space for $n=6$ and the hyperbolic $2$-space for $n \geq 7$) in the sence of Schl\"{a}fli symbols. 
%Let us consider the regular tessellation $\{ 3,n\}$ $(n\geq 2)$ in the sence of Schl\"{a}fli symbols. 
We denote the set of all vertices of $\{ 3,n\}$ by $X_n$. The set $X_n$ is a quandle with an operation by $v\ast w:=r_w(v)$, where $r_w$ is the rotation about $w$ by the angle $2\pi /n$. We call the quandle $X_n$ the \textit{Schl\"{a}fli quandle} related to $\{ 3,n\}$. 
Here we find a relation of $X_n$ with $Q_n(3_1)$. 
%the knot $n$-quadle of the trefoil as follows. 
%Inoue \cite{Inoue2021knot} showed that the knot quandle $Q(\tau^n 3_1)$ is a central extension of $X_n$ by a cyclic group $A_n$. 
%For each vertex $v$ of $\{ 3,n\}$, the rotation about $v$ by the angle $2\pi /n$ is denoted by $r_v$. We define the operation on $X_n$ by $v\ast w:=r_w(v)$. Then the pair $X_n=(X_n,\ast)$ is a quandle, which is called the Schl\"{a}fli quandle related to $\{ 3,n\}$. 

\begin{lemm}\label{lemm:Schlafli}
The Schl\"{a}fli quandle $X_n$ is quandle isomorphic to the knot $n$-quandle $Q_n(3_1)$ of the trefoil $3_1$.
\end{lemm}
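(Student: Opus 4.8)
The plan is to exhibit an explicit quandle isomorphism between the Schl\"{a}fli quandle $X_n$ (the vertex set of the regular tessellation $\{3,n\}$ with $v\ast w = r_w(v)$) and the knot $n$-quandle $Q_n(3_1)$. Since both quandles are connected and we have the algebraic model $Q_n(3_1)\cong A^n_{3_1}\backslash \pi_1(M^n_{3_1})$ from Proposition~\ref{prop:knot_n_qdle}, the cleanest route is to identify the symmetry group of $\{3,n\}$ with a group closely related to $G_n(3_1)=G(3_1)/\langle\langle m_{3_1}^n\rangle\rangle$, and then match up the coset spaces. Concretely, the orientation-preserving symmetry group of $\{3,n\}$ is the von Dyck (triangle) group $D(2,3,n) = \langle x,y \mid x^2 = y^3 = (xy)^n = 1\rangle$, and it is classical that $G(3_1) = \langle u,v \mid u^2 = v^3\rangle$ with meridian $m_{3_1} = u^{-1}v$ (or an equivalent normalization), so that killing the $n$-th power of the meridian yields precisely $D(2,3,n)$ up to the center. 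This is the group-theoretic heart of the matter.

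First I would set up the standard two-generator presentation of the trefoil group, $G(3_1)=\langle a,b\mid aba=bab\rangle$, rewrite it as $\langle u,v\mid u^2=v^3\rangle$ via $u=aba$, $v=ab$, record the meridian and preferred longitude in these terms, and compute that $G_n(3_1) = G(3_1)/\langle\langle m_{3_1}^n\rangle\rangle$ surjects onto the triangle group $\Delta = D(2,3,n)$ with kernel the (finite, central) image of $u^2=v^3$. Then $\pi_1(M^n_{3_1})$, being the commutator-type subgroup appearing in the exact sequence $1\to\pi_1(M^n_{3_1})\to G_n(3_1)\to\Z/n\Z\to 1$, is identified with the corresponding rotation subgroup, and the longitude $l_{3_1}$ with a rotation about a vertex — this is consistent with the remark in Subsection~\ref{subsect:structure_n-quandle} that $l_K$ is represented by the branching set, which for the twist-spun trefoil sits at the vertices of $\{3,n\}$.

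Next I would construct the map directly on the geometric side: fix a vertex $v_0$ of $\{3,n\}$; the stabilizer of $v_0$ in the rotation group is the cyclic group $\langle r_{v_0}\rangle$ of order $n$, and since the rotation group acts transitively on vertices, the vertex set $X_n$ is equivariantly identified with $\langle r_{v_0}\rangle\backslash (\text{rotation group})$. Under the identification of the rotation group with $\pi_1(M^n_{3_1})$ and of $\langle r_{v_0}\rangle$ with $A^n_{3_1}$ (generated by $l_{3_1}$), this becomes $A^n_{3_1}\backslash\pi_1(M^n_{3_1})\cong Q_n(3_1)$ by Proposition~\ref{prop:knot_n_qdle}. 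Finally I would check that this bijection intertwines the quandle operations: on $X_n$ the operation is conjugation-type, $v\ast w = r_w(v)$, where $r_w$ is the rotation about $w$; on $A^n_{3_1}\backslash\pi_1(M^n_{3_1})$ it is $Ax\ast Ay = A\varphi(xy^{-1})y$ with $\varphi$ conjugation by $m_{3_1}$. The compatibility reduces to observing that conjugation by $m_{3_1}$ acts on the rotation subgroup exactly as the deck transformation of $M^n_{3_1}$, which geometrically is the symmetry of $\{3,n\}$ realizing $r_w(v) = (\text{rotation at base point})$-translate of $v$; the computation here mirrors the one already done in Proposition~\ref{prop:knot_n_qdle}.

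The main obstacle will be the precise group-theoretic identification in the first two steps: pinning down that $G_n(3_1)$ modulo its center is the triangle group $D(2,3,n)$, and then keeping careful track of which cyclic subgroup is generated by $l_{3_1}$ versus by the central element $u^2=v^3$, so that the quotient $A^n_{3_1}\backslash\pi_1(M^n_{3_1})$ really is the vertex set (and not, say, the edge or face set) of $\{3,n\}$. Getting the orders to match is a useful sanity check: for $n=3,4,5$ the rotation group is respectively $A_4$, $S_4$, $A_5$ of orders $12,24,60$, the vertex stabilizer has order $n$, so $|X_n| = 4, 6, 12$, which should agree with the known values of $|Q_n(3_1)|$; and for $n\ge 7$ everything is infinite, consistent with $A^n_{3_1}\cong\Z$ and $\pi_1(M^n_{3_1})$ infinite. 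Once the group isomorphism is nailed down with the correct normalization of meridian and longitude, verifying that the quandle operations correspond is a short and essentially formal computation, identical in spirit to the one in the proof of Proposition~\ref{prop:knot_n_qdle}.
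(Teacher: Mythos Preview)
Your approach is genuinely different from the paper's, and it has a real gap. The paper's proof is a three-line presentation comparison: Inoue's Lemma~6.4 gives $X_n$ the quandle presentation $\langle v,w\mid (v\ast w)\ast v=w,\ (w\ast v)\ast w=v,\ w\ast^n v=w\rangle$, one observes that $v\ast^n w=v$ also holds, and since $\langle v,w\mid (v\ast w)\ast v=w,\ (w\ast v)\ast w=v\rangle$ presents $Q(3_1)$, the augmented presentation is exactly one for $Q_n(3_1)$. No group theory is needed.

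Your geometric/group-theoretic route is appealing but the key identification is wrong. You propose to identify the orientation-preserving symmetry group of $\{3,n\}$ with $\pi_1(M^n_{3_1})$ and the vertex stabilizer $\langle r_{v_0}\rangle$ (of order $n$) with $A^n_{3_1}$. Neither holds. Already for $n=5$ one has $|\pi_1(M^5_{3_1})|=120$ while $|D(2,3,5)|=|A_5|=60$; and by Proposition~\ref{prop:finite} the order of $l_{3_1}$ in $\pi_1(M^n_{3_1})$ is $2,4,10$ for $n=3,4,5$, not $n$. Your cardinality sanity check passes only because two \emph{different} coset descriptions happen to give the same number, not because the groups coincide.

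The fix is to use the Hoste--Shanahan model $Q_n(3_1)\cong P\backslash G_n(3_1)$ rather than $A^n_{3_1}\backslash\pi_1(M^n_{3_1})$. With $G(3_1)=\langle u,v\mid u^2=v^3\rangle$ and meridian $m=v^{-1}u$, the quotient $G_n(3_1)\to D(2,3,n)$ has central kernel $Z=\langle z\rangle$ where $z=u^2=v^3$. Since $l_{3_1}=z\,m^{-6}$ for the trefoil, one has $z\in P=\langle m,l_{3_1}\rangle$, hence $Z\subset P$ and $P\backslash G_n(3_1)\cong(\,P/Z\,)\backslash D(2,3,n)=\langle\bar m\rangle\backslash D(2,3,n)$, which is precisely the vertex set of $\{3,n\}$ because $\bar m$ is a vertex rotation of order $n$. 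The quandle operations then match by the same formal computation as in Proposition~\ref{prop:knot_n_qdle}. So your strategy can be salvaged, but only after swapping to the correct coset model; as written, the central step fails.
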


\begin{proof}
By \cite[Lemma~6.4]{Inoue2021knot}, the Schl\"{a}fli quandle $X_n$ has a presentation 
\[
\langle v,w\mid  (v\ast w)\ast v=w,  (w\ast v)\ast w=v,  w\ast^nv=w\rangle.
%\langle v,w\mid (1)\ (v\ast w)\ast v=w, (2)\ (w\ast v)\ast w=v, (3)\ w\ast^nv=w\rangle.
\]
Since $r_w$ is the rotation by $2\pi /n$, it holds that $v\ast^nw=v$; 
see also Remark~\ref{remark:relator} below.  
Thus, the Schl\"{a}fli quandle $X_n$ has a presentation 
\[
\langle v,w\mid (v\ast w)\ast v=w, (w\ast v)\ast w=v, w\ast^nv=w, v\ast^n w=v\rangle.
\] 
Since $\langle v,w\mid (v\ast w)\ast v=w, (w\ast v)\ast w=v\rangle$ is a presentation of the knot quandle $Q(3_1)$, the second presentation of $X_n$ is equal to a presentation of the knot $n$-quandle $Q_n(3_1)$. 
\end{proof}

\begin{remark}\label{remark:relator}
Although we show $v *^n w = v$ in the proof of Lemma~\ref{lemm:Schlafli} by a geometric property of the rotation $r_w$, we can also prove it as a consequence obtained from %the first and the third among 
the three relators 
$(v\ast w)\ast v=w$,  $(w\ast v)\ast w=v$ and $w\ast^nv=w$ of the first presentation of $X_n$.  
In fact, we have 
\begin{equation*}
\begin{split}
v *^n w & = v *^n ( (v*w)*v ) = (v*v) *^n ( (v*w)*v ) = \Big( v *^n (v*w) \Big) * v \\
& = \Big( ( (v *^{-1} w) * w) *^n (v*w) \Big) * v = 
\Big( ( (v *^{-1} w) *^n v )*w \Big) *v \\
& = 
\Big( \big( (v *^n v) *^{-1} (w *^n v) \big)*w \Big)*v =
\Big( (v *^{-1} w)  *w \Big)*v  \\ 
& = v * v  = v, 
\end{split}
\end{equation*}
where the relator $(v\ast w)\ast v=w$ implies the first equality, and the relator $w\ast^nv=w$ implies the third equality from the end. 
%where the first equality follows from the relator $(v\ast w)\ast v=w$ and the third equality from the end follows from the relator $w\ast^nv=w$. 
We note that the relator $(w\ast v)\ast w=v$ is not used in the computation. 
\end{remark}

It follows from Lemma~\ref{lemm:Schlafli} that the extension $A^n_{3_1} \curvearrowright Q(\tau^n 3_1) \to Q_n(3_1)$ in Theorem~\ref{theo:central_extension} for the trefoil $3_1$ is essentially the same as his extension $A_n \curvearrowright Q(\tau^n \, 3_1) \to X_n$. 
Hence, by Proposition~\ref{prop:quandle_homology_case_infinite}, we have that the cyclic group $A_n$ is group isomorphic to $\Z$ for $n>5$, answering Inoue's question. Here we give an alternative direct proof.  

%\begin{prop}
%\label{prop:order_trefoil_n-quandle}
%For $n>5$, the order of  $A^n_{3_1}$ is infinite in $\pi_1(M^n_{3_1})$.
%\end{prop}
\begin{proof}[A direct proof of $A^n_{3_1} \cong \Z$ for $n>5$]
Since $M^n_{3_1}$ is an irreducible 3-manifold with infinite fundamental group for $n>5$, the group $\pi_1(M^n_{3_1})$ is torsion-free; see \cite[Subsection~3.2~(C.3)]{Aschenbrenner2015}. Thus, it is sufficient to show that $l_{3_1}$ is not equal to the identity element $e$ in $\pi_1(M^n_{3_1})$. 
It is known in \cite{rolfsen1976knots} that 
%the group 
$\pi_1(M^n_{3_1})$ has the following presentation 
\[
\langle x_1,\ldots,x_n\mid x_{i-1}=x_ix_{i-2}\ (i=1,\ldots,n)\rangle, 
\]
where the indices of the relations are taken modulo $n$. 
It is also known in \cite{Inoue2021knot} that $l_{3_1}=x_{i} x_{i-1}^{-1} x_{i}^{-1} x_{i-1}$ in $\pi_1(M^n_{3_1})$ for each $i$. 
%We note that $l_{3_1}=x_2x_1^{-1}x_2^{-1}x_1=\cdots=x_nx_{n-1}^{-1}x_n^{-1}x_{n-1}=x_1x_n^{-1}x_1^{-1}x_n$; see\cite{Inoue2021knot}. 

Assume that $l_{3_1}=e$ in $\pi_1(M^n_{3_1})$. For any $i$, we have
\[
x_ix_{i-1}^{-1}x_i^{-1}x_{i-1}=e \Leftrightarrow x_ix_{i-1}^{-1}=x_{i-1}^{-1}x_i \Leftrightarrow x_ix_{i-1}=x_{i-1}x_i
\]
and
\[
x_i=x_{i+1}x_{i-1}\Leftrightarrow x_i=x_{i+2}x_ix_{i-1}\Leftrightarrow x_i=x_{i+2}x_{i-1}x_i\Leftrightarrow e=x_{i+2}x_{i-1}.
\]
Furthermore, it holds that
\[
x_i=x_{i+1}x_{i-1}\Leftrightarrow x_{i-1}x_ix_{i-1}^{-1}=x_{i-1}x_{i+1}\Leftrightarrow x_i=x_{i-1}x_{i+1}.
\]
It follows that $x_i$ commutes with $x_{j}$ for any $i$ and $j$, and hence $\pi_1(M^n_{3_1})$ is abelian.
Then the fundamental group $\pi_1(M^n_{3_1})$ is group isomorphic to the first homology group $H_1(M^n_{3_1})$. 
If $n$ is equal to $6k,6k\pm1,6k\pm2$ or $6k-3$, then it is known in \cite[Section~10.D]{rolfsen1976knots} that $H_1(M^n_{3_1})$ is group isomorphic to $\Z^2,0,\Z/3\Z$ or $(\Z/2\Z)^2$, respectively. On the other hand, for $n > 5$, it is known in \cite{Aschenbrenner2015} that $\pi_1(M^n_{3_1})$ is infinite and is not group isomorphic to $\Z^2$. This is a contradiction. 
\end{proof}

%%%%%%%%%%%%%%%%%%%%%%%%%%%%%%%%%%%%%%%%%%%%%%%%%%%%%%%%
\subsection{Types of the knot $n$-quandles}
\label{subsect:type_n-quandle}
We discuss the types of the knot $n$-quandles. 
By definition, 
%of the knot $n$-quandles, 
we can see that the type of $Q_n(K)$ is a divisor of $n$ for an oriented nontrivial $1$-knot $K$ and an integer $n>1$. 
Although the type of $Q_n(K)$ has not been determined exactly until now, 
we can prove the following: 
%by using the quandle covering defined in Section~\ref{sect:central_extension}. 
%
%However, we do not know whether the type of $Q_n(K)$ is equal to $n$ or not at present. 
%In this subsection, we show that ${\rm type}(Q_n(K))=n$ exactly 
%using the quandle covering defined in Section~\ref{sect:central_extension}.  
\begin{theo}\label{theo:type}
The type of $Q_n(K)$ is equal to $n$ for any oriented nontrivial $1$-knot $K$ and any integer $n>1$.  
%For any nontrivial 1-knot $K$ and integer $n>1$, we have ${\rm type}(Q_n(K))=n$. 
\end{theo}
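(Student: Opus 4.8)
The plan is to combine the covering-theoretic machinery already assembled in the paper with the computation of the type of the knot quandle of the $n$-twist spin. Recall from the proof of Theorem~\ref{theo:simply_connect_twist_spun} that $\mathrm{type}(Q(\tau^n K)) = n$ for every nontrivial $K$ by \cite[Theorem~3.3]{TanakaTaniguchi}; note that $K$ being nontrivial guarantees $\tau^n K$ is a genuine (non-unknotted) $2$-knot, which is where nontriviality enters. We also have, by Theorem~\ref{theo:universal_covering}, that the associated surjection $p\colon Q(\tau^n K)\to Q_n(K)$ is a (universal) covering, and by definition $Q_n(K)$ is connected. Thus the situation is exactly the hypothesis of Corollary~\ref{cor:type_covering}: there is a quandle covering $p\colon \tilde X \to X$ with $\tilde X = Q(\tau^n K)$ connected and of finite type $n$, whence $\mathrm{type}(Q_n(K)) = \mathrm{type}(Q(\tau^n K)) = n$.

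So the whole argument is: first invoke $\mathrm{type}(Q(\tau^n K)) = n$ from \cite[Theorem~3.3]{TanakaTaniguchi} together with the nontriviality of $K$; second, recall that $p\colon Q(\tau^n K)\to Q_n(K)$ is a covering (Theorem~\ref{theo:universal_covering}) onto the connected quandle $Q_n(K)$, with source $Q(\tau^n K)$ connected (it is the knot quandle of a knot, hence connected); third, apply Corollary~\ref{cor:type_covering} to conclude equality of types. As a sanity check one should also record the elementary half — that $\mathrm{type}(Q_n(K))$ divides $n$ — which is immediate from the presentation/defining relation $x \ast^n y = x$ imposed in forming $Q_n(K)$ from $Q(K)$; but in fact Corollary~\ref{cor:type_covering} already delivers equality, not just a divisibility bound, so this observation is only a consistency remark.

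The only real content hidden here is the input $\mathrm{type}(Q(\tau^n K)) = n$, which is cited from \cite{TanakaTaniguchi} and is exactly the place where one needs a genuinely geometric or diagrammatic argument (analyzing the surface-knot diagram of $\tau^n K$ from a long-knot diagram of $K$, as in Satoh's presentation \cite{Satoh2002surface}, and showing the $n$-th power of a point symmetry is the identity while no smaller power is). Granting that, the present theorem is a formal corollary. Hence I expect the main obstacle to be purely bookkeeping: making sure the hypotheses of Corollary~\ref{cor:type_covering} are literally met — in particular that $Q(\tau^n K)$ is connected (true, as the knot quandle of any knot is connected) and that its type is finite (true, it equals $n<\infty$) — after which there is nothing left to prove. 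A one-line proof citing \cite{TanakaTaniguchi}, Theorem~\ref{theo:universal_covering}, and Corollary~\ref{cor:type_covering} is what I would write.
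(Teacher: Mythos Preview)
Your proposal is correct and follows essentially the same approach as the paper's proof: invoke \cite[Theorem~3.3]{TanakaTaniguchi} for $\mathrm{type}(Q(\tau^n K))=n$, note that $Q(\tau^n K)$ is connected and that there is a covering $Q(\tau^n K)\to Q_n(K)$, and then apply Corollary~\ref{cor:type_covering}. The paper's proof is exactly this one-liner.
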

\begin{proof}
The authors proved in \cite[Theorem~3.3]{TanakaTaniguchi} that the type of the knot quandle $Q(\tau^n K)$ of $n$-twist spin $\tau^nK$ is equal to $n$. We recall that $Q(\tau^n K)$ is connected and there is a covering from $Q(\tau^nK)$ to $Q_n(K)$. By Corollary~\ref{cor:type_covering}, it holds that ${\rm type}(Q_n(K))={\rm type}(Q(\tau^nK))=n$.
\end{proof}

We give an alternative proof for \cite[Theorem~5.2.5]{Winker1984quandles} by Theorem~\ref{theo:type}. 

\begin{theo}\label{theo:Winker}
If the knot $n$-quandle $Q_n(K)$ of an oriented $1$-knot $K$ is trivial for some integer $n>1$, then $K$ is trivial. 
\end{theo}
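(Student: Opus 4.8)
The plan is to obtain this statement as an immediate corollary of Theorem~\ref{theo:type}. First I would recall that the trivial quandle --- the one-element quandle, or more generally any quandle whose operation satisfies $x \ast y = x$ for all $x,y$ --- has type $1$, since $x \ast^1 y = x$ holds identically. (For a connected quandle, which $Q_n(K)$ always is, having trivial operation is in fact equivalent to being a single point, but this refinement is not needed.)

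Next I would argue by contraposition. Suppose $K$ is a nontrivial oriented $1$-knot and $n > 1$. By Theorem~\ref{theo:type}, the type of $Q_n(K)$ equals $n$, which is at least $2$; hence $Q_n(K)$ is not the trivial quandle. Equivalently, if $Q_n(K)$ is trivial for some $n > 1$, then $K$ must be trivial, which is exactly the assertion.

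There is essentially no obstacle here: all of the substance lies in Theorem~\ref{theo:type}, whose proof in turn combines the computation ${\rm type}(Q(\tau^n K)) = n$ of \cite[Theorem~3.3]{TanakaTaniguchi} with the invariance of type under quandle coverings (Corollary~\ref{cor:type_covering}). One could equally bypass Theorem~\ref{theo:type} and invoke \cite[Theorem~3.3]{TanakaTaniguchi} and Corollary~\ref{cor:type_covering} directly, applying them to the covering $Q(\tau^n K) \to Q_n(K)$ together with the connectedness of $Q(\tau^n K)$. The only mild care needed is the bookkeeping that a trivial quandle has type $1$, whereas Theorem~\ref{theo:type} forbids type $1$ for $Q_n(K)$ when $K$ is nontrivial and $n > 1$.
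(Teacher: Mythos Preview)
Your proof is correct and follows essentially the same approach as the paper: both argue the contrapositive by invoking Theorem~\ref{theo:type} to conclude that $\mathrm{type}(Q_n(K))=n\geq 2$ when $K$ is nontrivial, forcing $Q_n(K)$ to be nontrivial.
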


\begin{proof}
If $K$ is nontrivial, then by Theorem~\ref{theo:type} we have that the type of $Q_n(K)$ is equal to $n$ for any integer $n>1$, and hence that $Q_n(K)$ is nontrivial for any integer $n>1$, which implies the contraposition. 
\end{proof}

Using Theorem~\ref{theo:Winker}, 
we give a proof for the following fact,  %mentioned in Section~\ref{sect:intro}, 
where it may be folklore and a reference that contains its proof may not be found. 
%and is mentioned in Section~\ref{sect:intro}. 

\begin{theo}
The knot quandle $Q(K)$ is infinite for any oriented nontrivial $1$-knot $K$. 
Moreover, the type of $Q(K)$ is infinite. 
\end{theo}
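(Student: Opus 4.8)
The plan is to deduce both assertions from the corresponding facts about the knot $n$-quandle $Q_n(K)$, using the fact that $Q_n(K)$ is a quotient of $Q(K)$ and that quandle operations pass to quotients. For the cardinality statement, I would first note that if $K$ is trivial, then $Q(K)$ is the trivial quandle on the meridian disks, so there is nothing to prove; hence assume $K$ is nontrivial. Then by Theorem~\ref{theo:Winker} (its contrapositive), the knot $n$-quandle $Q_n(K)$ is nontrivial for every integer $n>1$; in fact, by Theorem~\ref{theo:type}, $\mathrm{type}(Q_n(K))=n$. The key observation is that the type of $Q(K)$ must be a multiple of $\mathrm{type}(Q_n(K))=n$ for every $n>1$: indeed, since $Q_n(K)$ is the quotient of $Q(K)$ by the relation $x\sim x\ast^n y$, any identity $x\ast^k y = x$ holding for all $x,y$ in $Q(K)$ descends to $Q_n(K)$, so $\mathrm{type}(Q_n(K))\mid \mathrm{type}(Q(K))$ whenever the latter is finite (this is the same bookkeeping as in Lemma~\ref{lemm:type_covering} applied to the surjection $Q(K)\to Q_n(K)$). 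But $n\mid\mathrm{type}(Q(K))$ for arbitrarily large $n$ forces $\mathrm{type}(Q(K))=\infty$, which already proves the ``moreover'' part.

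For the cardinality, I would argue that an infinite type forces an infinite underlying set. Fix any element $x\in Q(K)$ (with $K$ nontrivial and $Q(K)$ connected). Since $\mathrm{type}(Q(K))=\infty$, for each $k\geq 1$ there exist $x,y$ with $x\ast^k y\neq x$; but more is needed, so instead I would use connectedness together with the structural identification. Recall that $Q(K)$ surjects onto $Q_n(K)$ for every $n>1$, and $|Q_n(K)|\geq n$ because $\mathrm{type}(Q_n(K))=n$ means there are elements $x,y$ with $x, x\ast y, x\ast^2 y,\dots, x\ast^{n-1}y$ pairwise distinct (as $S_y$ has order exactly $n$ on the orbit of $x$ for a suitable choice; more precisely $\mathrm{type}(Q_n(K))=n$ gives at least one pair $x,y$ for which $S_y$ restricted to $\{x,x\ast y,\dots\}$ has order $n$, yielding $n$ distinct elements). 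Hence $|Q(K)|\geq |Q_n(K)|\geq n$ for every $n>1$, so $Q(K)$ is infinite.

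The main obstacle is the implication ``$\mathrm{type}(Q_n(K))=n \Rightarrow |Q_n(K)|\geq n$'': one must extract $n$ genuinely distinct elements from the bare statement that $\min\{k: x\ast^k y = x \text{ for all } x,y\}=n$. The cleanest route is to observe that $\mathrm{type}(Q_n(K))=n$ implies there exists a pair $(x_0,y_0)$ with $x_0\ast^d y_0\neq x_0$ for every proper divisor... no, more directly: $\mathrm{type}=n$ means $x\ast^n y=x$ always, but $x\ast^m y\neq x$ for some $(x,y)$ for each $m<n$ with $m\mid n$ fails in general — so instead I would simply invoke that the order of $S_{y_0}$ (as a permutation of $Q_n(K)$) divides $n$ for all $y_0$ but does not divide any $m<n$ for at least one $y_0$, whence that $S_{y_0}$ has order exactly $n$ and therefore moves some point through a cycle of length a divisor of $n$ that is $>1$; iterating over the finitely many divisors and using that $n$ itself must be achieved, one gets a cycle of length exactly $n$, hence $n$ distinct elements. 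If even this is deemed delicate, an alternative is to avoid cardinality counts entirely and instead cite that $Q(K)$ maps onto the infinitely many non-isomorphic finite quandles $Q_n(K)$ (their orders $|Q_n(K)|$ are unbounded by Hoste–Shanahan–type finiteness results combined with Theorem~\ref{theo:type}), which cannot happen for a finite quandle.
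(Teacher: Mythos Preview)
Your argument for the type of $Q(K)$ is correct and is essentially the paper's argument in different clothing: you use Theorem~\ref{theo:type} together with Lemma~\ref{lemm:type_covering} to get $n\mid\mathrm{type}(Q(K))$ for every $n>1$, while the paper picks $n$ coprime to a putative finite type, forces $Q_n(K)$ to be trivial via the Euclidean algorithm, and then invokes Theorem~\ref{theo:Winker}. Both routes work; yours is marginally more direct since Theorem~\ref{theo:Winker} is itself derived from Theorem~\ref{theo:type}.

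Your argument for the cardinality, however, has a genuine gap. From $\mathrm{ord}(S_{y_0})=n$ you conclude ``one gets a cycle of length exactly $n$, hence $n$ distinct elements.'' This inference is false: a permutation of order $n$ need not have any cycle of length $n$; for instance a product of a $2$-cycle and a $3$-cycle has order $6$ on a $5$-element set. So the implication $\mathrm{type}(Q_n(K))=n\Rightarrow|Q_n(K)|\ge n$ is not established by your reasoning. (There is also a smaller gap earlier: the existence of a single $y_0$ with $\mathrm{ord}(S_{y_0})=n$, rather than merely $\mathrm{lcm}_y\,\mathrm{ord}(S_y)=n$, requires the connectedness of $Q_n(K)$, which you do not invoke.) Your fallback alternative could be made to work---a finite quandle has only finitely many quotients, whereas the $Q_n(K)$ are pairwise non-isomorphic by Theorem~\ref{theo:type}---but the justification you give (``Hoste--Shanahan-type finiteness results'') is not the right one.

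The paper sidesteps all of this with a one-line observation you overlooked: if a quandle $X$ is finite, then each $S_y$ is a bijection of a finite set and hence has finite order, and the lcm of finitely many finite orders is finite, so $\mathrm{type}(X)<\infty$. Contrapositively, $\mathrm{type}(Q(K))=\infty$ immediately gives $|Q(K)|=\infty$. This is both simpler and avoids the cycle-length issue entirely.
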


\begin{proof}
If a quandle is finite, then type of the quandle is also finite, 
since the order of a bijection from the finite set to itself is finite. 
Thus it is sufficient to show the latter statement. 
Suppose that the type of $Q(K)$ is finite. 
Then, for a positive integer $n$ relatively prime to the type of $Q(K)$, 
it follows from the Euclidean algorithm that the knot $n$-quandle $Q_n(K)$ is trivial, 
and hence we have that $K$ is trivial by Theorem~\ref{theo:Winker}. 
This is a contradiction. 
\end{proof}

%%%%%%%%%%%%%%%%%%%%%%%%%%%%%%%%%%%%%%%%%%%%%%%%%%%%%%%%
\subsection{The finite knot $n$-quandles}
%The aim of this subsection is to compute 
We compute the cardinalities of all finite knot $n$-quandles completely. 
%for oriented $1$-knots. 
Hoste and Shanahan \cite{Hoste2017links} showed that the knot $n$-quandle $Q_n(K)$ is finite if and only if $\pi_1(M^n_K)$ is finite for an oriented $1$-knot $K$ and an integer $n>1$. 
%As mentioned in Proposition~\ref{prop:finite}, 
%By Lemma~\ref{lemm:finite}, the group $\pi_1(M^n_K)$ is finite if and only if the pair of the $1$-knot $K$ and the integer $n$ is in the table of the proposition. %Proposition~\ref{prop:finite}. 
Since the finite knot $n$-quandles of torus knots and $2$-bridge knots have already been studied in \cite{Crans2019finite}, 
it is sufficient to compute the cardinalities of the finite knot $2$-quandles of 
Montesinos knots except for $2$-bridge knots, that is, for 
%the two classes of Motesinos knots: 
Montesinos knots $M(b;\frac{1}{2},\frac{\beta_2}{3},\frac{\beta_3}{3})$ and Montesinos knots $M(b;\frac{1}{2},\frac{\beta_2}{3},\frac{\beta_3}{5})$; 
see Lemma~\ref{lemm:finite}. %and also Proposition~\ref{prop:finite}. 
\begin{theo}
\label{theo:order_Montesinos_n-quandle}
The following hold for the finite knot $2$-quandles of Montesinous knots except for $2$-bridge knots. 
%{\setlength{\leftmargini}{18pt}  
\begin{itemize}
\item[$(1)$] 
If $K$ is a Montesinos knot $M(b;\frac{1}{2},\frac{\beta_2}{3},\frac{\beta_3}{3})$, then 
%Let $K$ be a Montesinos knot $M(b;\frac{1}{2},\frac{\beta_2}{3},\frac{\beta_3}{3})$. Then 
we have that $|Q_2(K)|$ is equal to $12\mu_1$, where we set $\mu_1 =| 6 (-b+\frac{1}{2}+\frac{\beta_2}{3}+\frac{\beta_3}{3}) |$.
\item[$(2)$] 
If $K$ is a Montesinos knot $M(b;\frac{1}{2},\frac{\beta_2}{3},\frac{\beta_3}{5})$, then 
%Let $K$ be a Montesinos knot $M(b;\frac{1}{2},\frac{\beta_2}{3},\frac{\beta_3}{5})$. Then 
we have that $|Q_2(K)|$ is equal to $30\mu_2$, where we set $\mu_2 =| 30 (-b+\frac{1}{2}+\frac{\beta_2}{3}+\frac{\beta_3}{5}) |$.
\end{itemize}
%}
\end{theo}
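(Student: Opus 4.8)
The plan is to combine the group-theoretic description of the knot $2$-quandle with the numerical data already extracted in the proof of Proposition~\ref{prop:finite}. By Proposition~\ref{prop:knot_n_qdle}, for any oriented $1$-knot $K$ there is a quandle isomorphism $Q_2(K)\cong A^2_K\backslash \pi_1(M^2_K)$, where $A^2_K=\langle l_K\rangle$. Both Montesinos families in the statement occur in cases $(2)$ and $(3)$ of Lemma~\ref{lemm:finite}, so $\pi_1(M^2_K)$ is finite (equivalently, $Q_2(K)$ is finite, by Hoste--Shanahan \cite{Hoste2017links}). Hence, applying Lagrange's theorem to the cyclic subgroup $A^2_K$ of the finite group $\pi_1(M^2_K)$,
\[
|Q_2(K)| \;=\; \bigl| A^2_K\backslash \pi_1(M^2_K)\bigr| \;=\; \frac{|\pi_1(M^2_K)|}{|A^2_K|} \;=\; \frac{|\pi_1(M^2_K)|}{\operatorname{ord}(l_K)} .
\]
Equivalently, one may read this off from the central extension $A^2_K\curvearrowright Q(\tau^2 K)\to Q_2(K)$ of Theorem~\ref{theo:central_extension}, using $|Q(\tau^2 K)|=|\pi_1(M^2_K)|$ together with axiom $(\mathrm{E2})$, which forces every fiber to have exactly $|A^2_K|$ elements.

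Next I would substitute the two pairs of values established inside the proof of Proposition~\ref{prop:finite}. For $K=M(b;\tfrac12,\tfrac{\beta_2}{3},\tfrac{\beta_3}{3})$ one has $|\pi_1(M^2_K)|=24\mu_1$ by \cite[Theorem~2.10]{Sakuma1990geometries} and $\operatorname{ord}(l_K)=2$, so the displayed formula gives $|Q_2(K)|=24\mu_1/2=12\mu_1$. For $K=M(b;\tfrac12,\tfrac{\beta_2}{3},\tfrac{\beta_3}{5})$ one has $|\pi_1(M^2_K)|=120\mu_2$ and $\operatorname{ord}(l_K)=4$, so $|Q_2(K)|=120\mu_2/4=30\mu_2$. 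This establishes both parts.

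The only point requiring attention is purely bookkeeping: checking that the constants $\mu_1$ and $\mu_2$ appearing here are literally the quantities denoted ``$\mu$'' in cases $(2)$ and $(3)$ of Proposition~\ref{prop:finite} (they are, by definition), so that the division above is performed on matching numbers. Since the two genuinely substantive computations --- the order of $\pi_1(M^2_K)$ via Sakuma's description of the relevant spherical geometry, and the order of $l_K$ via the component count of the preimage link $p^{-1}(\tilde K)\subset S^3$ --- were already carried out for Proposition~\ref{prop:finite}, I do not anticipate any real obstacle; the present statement is essentially a repackaging of that proposition through the coset description of $Q_2(K)$.
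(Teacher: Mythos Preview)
Your proposal is correct and follows essentially the same approach as the paper: both use the coset description $Q_2(K)\cong A^2_K\backslash\pi_1(M^2_K)$ from Proposition~\ref{prop:knot_n_qdle} to write $|Q_2(K)|=|\pi_1(M^2_K)|/|A^2_K|$, then substitute the values $|\pi_1(M^2_K)|=24\mu_1,\,120\mu_2$ from \cite[Theorem~2.10]{Sakuma1990geometries} and $|A^2_K|=2,\,4$ from Proposition~\ref{prop:finite}. Your additional remarks (Lagrange, the alternative reading via the central extension and axiom~(E2)) are fine but not needed beyond what the paper does.
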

\begin{proof}
Since $Q_n(K)$ is quandle isomorphic to the quandle $A^n_K\backslash \pi_1(M^n_K)$, the order of $Q_n(K)$ is equal to $|\pi_1(M^n_K)|/|A^n_K|$ if $Q_n(K)$ is finite. 

We prove the case (1). 
Let $K$ be a Montesinos knot $M(b;\frac{1}{2},\frac{\beta_2}{3},\frac{\beta_3}{3})$. Then it follows from \cite[Theorem~2.10]{Sakuma1990geometries} that we have $|\pi_1(M^2_K)|=24\mu_1$. 
Since $|A^2_K| = 2$ by Proposition~\ref{prop:finite}, we have $|Q_2(K)|=12\mu_1$.

We prove the case (2). 
Let $K$ be a Montesinos knot $M(b;\frac{1}{2},\frac{\beta_2}{3},\frac{\beta_3}{5})$. Then it follows from \cite[Theorem~2.10]{Sakuma1990geometries} that we have $|\pi_1(M^2_K)|=120\mu_2$. Since $|A^2_K|=4$ by Proposition~\ref{prop:finite}, we have $|Q_2(K)|=30\mu_2$.
\end{proof}

The cardinalities of the finite knot $n$-quandles for torus knots and 2-bridge knots have already been computed in \cite{Crans2019finite}; see also \cite{Hoste2017links} for the trefoil. 
%Crans, et. al. \cite{Crans2019finite} studied the finite knot $n$-quandle of torus links and 2-bridge links. 
Combining their works and our result, we obtain the complete list of the cardinalities of all finite knot $n$-quandles for oriented $1$-knots as follows.
\[
\begin{array}{c | c c c c c c c}
\toprule
K & S(\alpha,\beta) & M(b;\frac{1}{2},\frac{\beta_2}{3},\frac{\beta_3}{3}) & 
M(b;\frac{1}{2},\frac{\beta_2}{3},\frac{\beta_3}{5}) &
3_1 & 3_1 & 3_1 & 5_1 \\
%K & S(\alpha,\beta) & M(b;1/2,\beta_2/3,\beta_3/3) & M(b;1/2,\beta_2/3,\beta_3/5) &
%3_1 & 5_1 & 3_1 & 3_1 \\
n & 2 & 2 & 2 & 3 & 4 & 5 & 3 \\
%\midrule
%\mu & \text{--} & 6|-b+\frac{1}{2}+\frac{\beta_2}{3}+\frac{\beta_3}{3}| & 
%30|-b+\frac{1}{2}+\frac{\beta_2}{3}+\frac{\beta_3}{5}| 
%& \text{--} & \text{--} & \text{--} & \text{--} \\
%
%\mu & \text{--} & 6b + 3 + 2\beta_2 + 2\beta_3 & 30b +15 +10\beta_2 + 6\beta_3 
%& \text{--} & \text{--} & \text{--} & \text{--} \\
\midrule
%|Q(\tau^n K)|  & \alpha & 24\mu_1 & 120\mu_2 & 8 & 24 & 120 & 120 \\
|Q_n(K)| & \alpha & 12\mu_1 & 30\mu_2 & 4 & 6 & 12 & 20 \\ 
\midrule
%\textrm{reference} 
\textrm{source} & \textrm{\cite{Crans2019finite}} & \textrm{Theorem~\ref{theo:order_Montesinos_n-quandle}} & \textrm{Theorem~\ref{theo:order_Montesinos_n-quandle}} &\textrm{\cite{Hoste2017links}} & \textrm{\cite{Hoste2017links}} & \textrm{\cite{Hoste2017links}} & \textrm{\cite{Crans2019finite}}\\
\bottomrule
\end{array}
\]

%%%%%%%%%%%%%%%%%%%%%%%%%%%%%%%%%%%%%%%%%%%%%%%%%%%%%%%%
\section*{Acknowledgement}
The authors would like to thank Seiichi Kamada and Makoto Sakuma for their helpful comments. 
The first author was supported by JSPS KAKENHI Grant Number 17K05242 and 21K03220. 
The second author was supported by JSPS KAKENHI Grant Number 21J21482.

\bibliographystyle{plain}
\bibliography{reference}

\end{document}